\documentclass[11pt,reqno]{amsart}%
\usepackage[a4paper,margin=3cm]{geometry}%
\usepackage[T1]{fontenc}%
\usepackage[utf8]{inputenc}%
\usepackage{lmodern,mathtools}
\usepackage{amsmath,amsfonts,amssymb,amsthm}
\usepackage{amsthm}
\usepackage{tikz}
\usepackage{dsfont}

\RequirePackage[colorlinks,linkcolor=blue,citecolor=blue,urlcolor=blue]{hyperref}
\title[]{Large deviations for mean field particle systems via analysis on spaces of measures}
\author{Valentin Pesce}
\address[VP]{CMAP, Ecole Polytechnique, UMR 7641, 91120 Palaiseau, France.}
\email{\url{mailto:valentin.pesce@polytechnique.edu}}

\date{Summer 2026, compiled \today}
\theoremstyle{plain}
\newtheorem{theorem}{Theorem}[section]%
\newtheorem{definition}[theorem]{Definition}
\newtheorem{corollary}[theorem]{Corollary}%
\newtheorem{proposition}[theorem]{Proposition}%
\newtheorem{lemma}[theorem]{Lemma}%
\newtheorem{remark}[theorem]{Remark}%
\newtheorem{claim}{Claim}%
\let\ensembleNombre\mathbb
\newcommand\tab[1][0.5cm]{\hspace*{#1}}
\newcommand\mes{\mathcal P(\R^d)}
\newcommand\mesT{\mathcal P(\mathbb T^d)}

\newcommand\mesTdeux{\mathcal P_2(\T^d)}
\newcommand\N{\ensembleNombre{N}}
\newcommand\Z{\ensembleNombre{Z}}

\newcommand\R{\ensembleNombre{R}}

\newcommand\T{\ensembleNombre{T}}
\newcommand\dis{\displaystyle}
\newcommand\E{\mathbb{E}}
\newcommand\mP{\mathbb{P}}

\DeclareMathOperator{\leb}{Leb} 
 
\makeatletter
\def\@MRExtract#1 #2!{#1}     
\renewcommand{\MR}[1]{
  \xdef\@MRSTRIP{\@MRExtract#1 !}%
  \href{http://www.ams.org/mathscinet-getitem?mr=\@MRSTRIP}{MR-\@MRSTRIP}}
\makeatother

\numberwithin{equation}{section}

\keywords{Large deviations, mean field particle systems, analysis on the space of probability measures.}

\begin{document}

\begin{abstract}
This paper introduces a robust strategy to establish large deviation principles for mean field interacting particle systems in $\T^d$ with vanishing noise. Viewed as a Freidlin-Wentzell type problem, our approach relies on recent techniques for partial differential equations (in short PDE) on the space of probability measures \cite{bertucci2026analysis}. We first outline the proof strategy for the PDE approach to large deviations in a self-contained manner. As the number of particles goes to infinity, we then show that the transfer function of the system converges to the unique viscosity solution of a limiting Hamilton-Jacobi equation on the space of probability measures. Finally, we explicitly derive the rate function by formulating the solution of this limiting PDE as the value function of an optimal control problem.
\end{abstract}

\maketitle
 
{\footnotesize\tableofcontents}
\section{Introduction}

\subsection{Motivation and results}
The goal of this paper is to establish a Freidlin-Wentzell \cite{freidlin1998random} large deviations type result for systems of particles in mean field interaction using some techniques of analysis on the space of probability measures. We shall consider systems of particles in mean field interaction with a stochastic perturbation that vanishes when the number of particles increases. 
\newline
Such systems of particles appeared in various topics as the propagation of chaos \cite{sznitman2006topics,meleard1987propagation,chaintron2022propagation} or random matrices \cite{dyson1962,anderson2010,bru1991wishart,demni2009radial,guskov2026dyson}. We shall study in this paper a toy model to present a strategy for proving such large deviations results using the viscosity solutions theory for Hamilton-Jacobi equations on the space of probability measures. 
\newline
Mostly during the eighties, Fleming et al. \cite{fleming1977exit,fleming1986pde,barles1988exit} studied some problems of large deviations for random, continuous in time, processes with values in $\R^d$ using the theory of viscosity solutions \cite{crandall1992}. A natural connection between large deviations and viscosity solutions theory can be done since the rate functions that measure the deviations of these processes can be characterized as the infimum of certain energies over admissible trajectories \cite{freidlin1998random,boue1998variational,varadhan1966asymptotic}.
This infimum can also be viewed as the value functions of optimal control problems which are known to be the unique viscosity solution of Hamilton-Jacobi equations \cite{crandall1983viscosity,crandall1984some,fleming1977exit}.
\newline
Let $d\ge 1$ be the dimension, $b:\T^d\times \mesTdeux\to\R^d$ be a Lipschitz drift term , $\sigma_N> 0$ such that $\sigma_N\to 0$ when $N\to+\infty$ and a horizon of time $T>0$. Consider the system of $N\ge 1$ particles on $\T^d$ in mean field interaction 
\begin{equation}
\label{eq:systemofparticlesintro}
dx_t^i=b(x_t^i,\mu_t^N)dt+\sigma_N dB_t^i \mathrm{ \,\,modulo \,\,}\Z^d, \, 1\le i\le N
\end{equation}
where $(B^i)_{i\in\N}$ are independent and identically distributed (in short iid) standard Brownian motions in $\R^d$ defined on a same probability space and $\mu_t^N=N^{-1}\sum_{1\le i\le N}\delta_{x_t^i}$ is the empirical distribution of the particles.
\newline
The goal is to prove a large deviations principle of the sequence of random variables $(X_N)_{N\ge 1}$ defined by 
\begin{equation}
\label{eq:randomvariablesintro}
\begin{split}
X_N \,:\, (\Omega,\mathcal F ,\mP)&\to C([0,T],\mesTdeux)\\
\omega&\mapsto \left(t\mapsto \frac{1}{N}\sum_{i=1}^N\delta_{x_t^i(\omega)}\right),
\end{split}
\end{equation}
where $(x_t^i)_{1\le i\le N}$ is a strong solution of \eqref{eq:systemofparticlesintro}. 
\newline
\tab Our strategy shall follow the ideas of the work of Fleming et al. Here, the main difficulty in comparison with their work is that at the limit, the partial differential equation (in short PDE) is set on the space of probability measures. The understanding of PDEs on the space of probability measures is a quite recent and active topic \cite{bertucci2026analysis} which was mostly initiated by the study of the mean field games introduced by Lasry and Lions \cite{lasry2007mean,lions2007-2011cours}. Since, mean field games have become an important study of interest and have investigated the notion of derivatives and PDEs on the space of probability measures \cite{lions2007-2011cours,cardaliaguet2010notes,cardaliaguet2015master,bertucci2026analysis}.
\newline
Formally, the key quantity of interest to prove the large deviations of $(X_N)_{N\ge 1}$ will be
\begin{equation}
\label{eq:ratefunctionlaplace}
\Lambda_N^f(t,x)=\cfrac{\sigma_N^2}{N}\,\log\left(\E_x\left[\exp(N\sigma_N^{-2}f(X_N(t)))\right]\right),\, (t,x)\in\R^+\times (\T^d)^N
\end{equation} for every $f\in BL(\mesTdeux)$ (where $BL(\mesTdeux)$ is the set of bounded and Lipschitz functions from $\mesTdeux$ to $\R$), which is the Cole-Hopf transform of the solution of a linear parabolic partial differential equation. This quantity is the strong solution to a nonlinear PDE on $(\T^d)^N$. When $N$ goes to infinity, we shall show that its limit $\mathcal U^f$ is the unique viscosity solution of the following Hamilton-Jacobi equation on the space of probability measures 
\begin{equation}
\label{eq:pdeintro}
\begin{split}
\partial_t \mathcal U(t,\mu)=\dis\int_{\T^d}b(y,\mu).D_\mu\mathcal U(t,\mu,y)\mu(dy)+\cfrac{1}{2}\dis\int_{\T^d}|D_{\mu}\mathcal U(t,\mu,y)|^2\mu(dy),\, \, t>0,\, \mu\in\mesTdeux .
\end{split}
\end{equation}
The uniform convergence of $\Lambda_N^f$ toward $\mathcal U^f$ is linked with some active questions that investigate how to approximate viscosity solutions of Hamilton-Jacobi equation on infinite dimensional space by finite dimensional projections of this solution \cite{cardaliaguet2026mean,cardaliaguet2023algebraic,gangbo2021finite,swikech2025finite,fabbri2017stochastic}.
\newline
\tab More precisely the main result that we are going to prove is the following one. 
\begin{theorem}
\label{thm:largedev1}
Assume that the family $(\mathrm{ Law}(X_N(0)))_{N\ge 1}$ satisfies a large deviations principle in $\mesTdeux$ at speed $\varepsilon_N:=N/\sigma_N^2$ with transfer function $\Lambda_0$ defined by $$\Lambda_0(g)= \underset{N\to +\infty}{\lim}\frac{1}{\varepsilon_N}\log\left(\E\left[\exp\left(\varepsilon_N g(X_N(0))\right)\right]\right),$$ for $g\in BL(\mesTdeux)$. 
\newline
Then the sequence of random variables $(X_N)_{N\ge1}$ satisfies a large deviation principle in $C([0,T],\mesTdeux)$ at speed $N/\sigma_N^2$ with good rate function $I$. More precisely, $I$ is given by $$I(\mu)=\sup_{m\ge 1}\sup_{0\le t_1\le ...\le t_m\le T} I_{t_1,...,t_m}(\mu(t_1),...,\mu(t_m))$$ 
where $$I_{t_1,...,t_m}(\mu_1,...,\mu_m)=\sup_{f_1,...,f_m\in BL(\mesTdeux)}\left[\sum_{i=1}^m f_i(\mu_i)-\Lambda_0(\mathcal U^{f_1+\mathcal U^{f_2+...}(t_2-t_1)(\bullet)}(t_1,\bullet))\right],$$
where for all $h\in BL(\mesTdeux)$, $\mathcal U^h:[0,T]\times \mesTdeux\to\R$ is the unique viscosity solution of \eqref{eq:pdeintro} with initial condition $h$.
\newline
Moreover, the good rate function $I$ can be written as follows: $$I(\mu)=I_0(\mu(0))+\cfrac{1}{2}\inf_{\substack{(v_s)_{s\in[0,T]};\\\partial_s \mu_s+\nabla.(\mu_s v_s)=0}}\int_0^T\int_{\T^d}|v_s(x)-b(x,\mu_s)|^2\mu_s(dx)ds$$ if $\mu:[0,T]\to \mesTdeux$ is absolutely continuous and $I(\mu)=+\infty$ otherwise.
\end{theorem}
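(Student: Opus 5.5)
The proof follows the Feng--Kurtz / Fleming route: convergence of nonlinear semigroups, then exponential tightness, then identification of the rate function through a control representation.

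\textbf{Step 1: the finite-$N$ equation.} For $f\in BL(\mesTdeux)$, the Cole--Hopf transform $\Lambda_N^f$ solves
\[
\partial_t \Lambda = \sum_i b(x^i,\mu^N_x)\cdot\nabla_{x^i}\Lambda+\frac{N}{2}\sum_i|\nabla_{x^i}\Lambda|^2+\frac{\sigma_N^2}{2}\sum_i\Delta_{x^i}\Lambda,
\]
with initial datum $f(\mu^N_x)$. Rescaling $\nabla_{x^i}=N^{-1}D_\mu$ turns the first two terms into the finite-dimensional projection of the Hamiltonian in \eqref{eq:pdeintro}. The viscous term is then of order $\sigma_N^2$ times a second-order operator, which vanishes in the limit.

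\textbf{Step 2: uniform estimates.} I prove uniform-in-$N$ bounds on $\Lambda_N^f$:
\begin{itemize}
\item boundedness, by $\|f\|_\infty$;
\item Lipschitz continuity in $\mu^N_x$ for the $W_2$ (or $W_1$) distance, via a synchronous coupling of two particle systems driven by the same Brownian motions, using the Lipschitz bound on $b$ and Gronwall;
\item time continuity.
\end{itemize}
These bounds let me define half-relaxed limits $\overline{\mathcal U},\underline{\mathcal U}$ on $[0,T]\times\mesTdeux$.

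\textbf{Step 3: viscosity sub- and supersolutions, and uniqueness.} I show that $\overline{\mathcal U}$ is a viscosity subsolution and $\underline{\mathcal U}$ a supersolution of \eqref{eq:pdeintro}.
\begin{itemize}
\item Test a smooth $\phi$ on measures at a strict maximum point of $\overline{\mathcal U}-\phi$. Projecting gives $\phi_N(x)=\phi(\mu^N_x)$, and $N\nabla_{x^i}\phi_N=D_\mu\phi(\mu^N_x,x^i)$.
\item The Laplacian term is $\frac{\sigma_N^2}{2N}\sum_i \mathrm{div}_yD_\mu\phi + O(\sigma_N^2/N)$, which tends to $0$.
\item Maxima of $\Lambda_N-\phi_N$ converge to maxima of the limit, provided a penalization such as $W_2^2$ or the smooth distances of \cite{bertucci2026analysis} is used to localize on the compact space $\mesTdeux$.
\end{itemize}
The comparison principle from \cite{bertucci2026analysis} then gives $\overline{\mathcal U}\le\underline{\mathcal U}$, so $\Lambda_N^f\to\mathcal U^f$ uniformly.

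\textbf{Main obstacle.} The hard step is this viscosity argument, specifically building test functions on $\mesTdeux$ whose projections are smooth on $(\T^d)^N$ and for which comparison holds. I would first try the Fourier/negative Sobolev type distance on $\T^d$, which is smooth in $\mu$ and has bounded derivatives.

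\textbf{Step 4: finite-dimensional LDP.} Conditioning and the Markov property give
\[
\frac{1}{\varepsilon_N}\log\E\Big[e^{\varepsilon_N\sum_i f_i(X_N(t_i))}\Big]\to\Lambda_0\Big(\mathcal U^{f_1+\mathcal U^{f_2+\dots}(t_2-t_1)}(t_1)\Big).
\]
This is done by iterating from the last time, using the uniform convergence of Step 3 together with the uniform Lipschitz bounds, which let me replace the inner functions inside outer expectations. Since $\mesTdeux$ is compact, Bryc's inverse lemma in the form for $BL$ test functions yields the LDP for the finite-dimensional marginals with rate $I_{t_1,\dots,t_m}$.

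\textbf{Step 5: exponential tightness in path space.} I need exponential tightness in $C([0,T],\mesTdeux)$. I get it from modulus-of-continuity estimates: $W_1(X_N(t),X_N(s))\le\|b\|_\infty|t-s|+\sigma_N\max$ of Brownian increments averaged over particles. The averaged Brownian increments have Gaussian tails at speed $N$, so after multiplication by $\sigma_N$ the speed is $N/\sigma_N^2$. Dawson--Gärtner, via the projective limit, then gives the LDP with $I=\sup_{t_1,\dots,t_m} I_{t_1,\dots,t_m}$.

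\textbf{Step 6: control representation of the rate function.} I identify $\mathcal U^h$ as the value function
\[
\mathcal U^h(t,\mu)=\sup_{v}\Big[h(\mu_t)-\tfrac12\int_0^t\!\!\int|v_s-b(\cdot,\mu_s)|^2d\mu_s\,ds\Big],
\]
the supremum taken over $\partial_s\mu_s+\nabla\cdot(\mu_s v_s)=0$ with $\mu_0=\mu$. This follows by checking, through dynamic programming, that the value function is a viscosity solution and invoking uniqueness; the Hamiltonian is the Legendre transform $\sup_v\int (v\cdot p-\frac12|v-b|^2)\,d\mu$.

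Plugging this into $I_{t_1,\dots,t_m}$ and using the convex duality $\sup_f[f(\nu)-\mathcal U^f(t,\mu)]=$ cost of moving $\mu$ to $\nu$, I obtain the cost functional on a partition. Taking the supremum over refining partitions gives the action integral, finite only on absolutely continuous curves. Here the lower semicontinuity and the Benamou--Brenier representation of absolutely continuous curves in Wasserstein space are used.
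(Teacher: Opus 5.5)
The gap is in Step~3, which is the step the paper treats as its main technical result.

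\textbf{Why the comparison principle cannot be invoked.} Testing $\overline{\mathcal U}$ only against smooth $\phi$ on $\mesTdeux$, through $\phi(\mu^N_x)$, gives a weaker viscosity property than the one the comparison principle uses.
\begin{itemize}
\item The comparison proved in the paper, following \cite{bertucci2026analysis}, doubles variables with $W_2^2(\mu,\nu)/(2\varepsilon)$. It inserts $(\pi_1,\varepsilon^{-1}\zeta)_\sharp\gamma_\varepsilon$ (Proposition~\ref{prop:subdiffW}) into the subsolution inequality. It inserts $(\pi_2,\varepsilon^{-1}\zeta)_\sharp\gamma_\varepsilon$, built from the \emph{same} optimal coupling, into the supersolution inequality.
\item So it needs the inequalities of Definition~\ref{def:sub/supersol} for arbitrary elements of $\partial^\pm$.
\item These elements are not gradients of smooth test functions. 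First, $W_2^2(\cdot,\nu)$ is not smooth: $x\mapsto W_2^2(\mu^N_x,\nu)$ is not $C^2$ on $(\T^d)^N$.
\item Second, when $\gamma_\varepsilon$ splits mass, $\psi_x$ is not even a Dirac mass. By contrast, $N\nabla_{x_i}\phi(\mu^N_x)=D_\mu\phi(\mu^N_x,x_i)$ depends only on the position $x_i$.
\end{itemize}

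\textbf{Why the Fourier fallback does not close.} A flat gauge $\Phi=\|\mu-\nu\|^2_{H^{-s}}$ fails for this Hamiltonian.
\begin{itemize}
\item Both touching points then carry the same momentum $p_\varepsilon=\varepsilon^{-1}\nabla K*(\mu_\varepsilon-\nu_\varepsilon)$, with $\widehat K(k)=|k|^{-2s}$. But it is integrated against $\mu_\varepsilon$ at one point and $\nu_\varepsilon$ at the other.
\item This leaves remainders such as $\frac12\int|p_\varepsilon|^2\,d(\mu_\varepsilon-\nu_\varepsilon)$ and $\int(b(\cdot,\mu_\varepsilon)-b(\cdot,\nu_\varepsilon))\cdot p_\varepsilon\,d\mu_\varepsilon$.
\item The penalization only gives $\varepsilon^{-1}\|\mu_\varepsilon-\nu_\varepsilon\|^2_{H^{-s}}\to0$, which does not make either remainder small. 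The first carries an extra factor $\varepsilon^{-1}$. The second would need $b$ Lipschitz in $\mu$ for $\|\cdot\|_{H^{-s}}$, which $W_2$-Lipschitz does not give.
\item With a common coupling $\gamma_\varepsilon$, the quadratic terms cancel exactly and the drift difference pairs with $\zeta(x,y)$. That is why the $W_2$ comparison works for merely Lipschitz $b$.
\end{itemize}

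\textbf{The missing idea.} In the proof of Theorem~\ref{thm:convergence}, the finite-$N$ test functions need not be symmetric projections of functions of measures. The paper lifts to $\mathcal H=L^2((0,1),\T^d)$ and fixes an arbitrary $(\theta,Z)\in\partial^+\tilde V(t_0,X_0)$. It approximates $X_0$ and $Z$ by step functions $x^N,z^N$, and maximizes $u_N-\phi_N$ with the labelled smooth function
\[
\phi_N(t,y)=\theta(t-t_0)+\frac1N\sum_{i=1}^N z_i^N\cdot\sin(y_i-x_i^N)+\frac{|t-t_0|^2}{2\gamma_N}+\frac{1}{2N\gamma_N}\sum_{i=1}^N\Bigl|\sin\Bigl(\frac{y_i-x_i^N}{2}\Bigr)\Bigr|^2 .
\]
Here particles at the same position can carry different momenta $z_i^N$. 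The penalization dominates $W_2^2$ but is $C^\infty$. One takes $\gamma_N\to0$ slowly compared with $\sigma_N$, the discretization errors and a truncation level of $Z$. This forces $D(X^N,Y^N)/\gamma_N\to0$, so the penalization gradient disappears in the limit of \eqref{eq:edphofcole}. The limit gives the inequality for $(\theta,Z)$ itself. Proposition~\ref{coro:liftsolimpliqueviscosol} then transfers it to every $(\theta,\psi)\in\partial^+\mathcal V(t_0,\mu)$.

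\textbf{Two smaller points.}
\begin{itemize}
\item In Step~4, separable sums $\sum_i f_i(x_i)$ are not a well-separating class, so Bryc's lemma does not apply directly. Use the subsequence argument with the tests $-k\min(d(x_i,\cdot),1)$ of Proposition~\ref{prop:largedevmarginals}.
\item In Step~6, the identity $\sup_f[f(\nu)-\mathcal U^f(t,\mu)]=\text{cost}$ is not a convex duality: the action is not convex in the curve, since $b$ depends on $\mu_s$. It follows from lower semicontinuity of the endpoint-constrained cost, obtained by compactness of curves with bounded action, as in Lemma~\ref{lemma:rewriteratefunctionmtuple}.
\end{itemize}
Your remaining steps are sound. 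The synchronous-coupling Lipschitz bound and the Brownian-increment tightness estimate are somewhat more elementary than the paper's versions.
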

\subsection{Structure of the paper}
In the first part of the paper, we recall general principles in the theory of large deviations that motivate our strategy for particle systems. Despite being mostly known by specialists some of these results are often presented as applications of some abstract results in large deviations theory. We give some elementary proofs of these results in our context. Typically, we recall that to prove that $(X_N)_{N\ge 1}$ satisfies a large deviations principle it suffices to show that the sequence is exponentially tight and has a unique admissible good rate function. We then prove that finding the good rate function for $C([0,T],\mathbb X)$ valued random variables $(Y_N)_{N\ge 1}$ reduces to studying its finite-dimensional marginals. We finally give a criterion to characterize the good rate function of random variables with values in product spaces using the Varadhan principle.
\newline
In the second part of this paper, we establish Theorem \ref{thm:largedev1}. We first show that the sequence $(X_N)_{N\ge1}$ defined in \eqref{eq:randomvariablesintro} is exponentially tight at speed $N/\sigma_N^2$. Then, using the Markov property we explain why the study of the good rate function of the finite dimensional marginals leads to the study of the limit of $\Lambda_N^f$ defined in \eqref{eq:ratefunctionlaplace} for $f\in BL(\mesTdeux)$. This function is the Cole-Hopf transform of the solution of a linear parabolic PDE and so is the solution of a Hamilton-Jacobi equation on $\R^+\times(\T^d)^N$. The study of the limit formally leads to the Hamilton-Jacobi PDE \eqref{eq:pdeintro} set on the space of probability measures. Using the formalism introduced in \cite{bertucci2026analysis}, we define a notion of viscosity solution of \eqref{eq:pdeintro}. We establish a comparison principle for viscosity solutions of \eqref{eq:pdeintro} proving the uniqueness of the viscosity solutions of this PDE. We then establish the most technical result of this paper concerning the uniform convergence of $\Lambda_N^f$ toward the unique viscosity solution of \eqref{eq:pdeintro} with initial condition $f\in BL(\mesTdeux)$. We finally give a complete proof of the large deviations part of Theorem \ref{thm:largedev1}. In the last section we explain how using the explicit solution of \eqref{eq:PDE} given by the value function of an optimal control problem on the probability measures, one can obtain an explicit formula of the good rate function of $(X_N)_{N\ge 1}$.

\subsection{Notation and background}
$\bullet$ In a metric space $(\mathbb X,d)$ we denote by $B(x,\gamma)$ the open ball of center $x\in\mathbb X$ and radius $\gamma\ge 0$ and $\overline{B(x,\gamma)}$ its closure.
\newline
$\bullet$ Given $(\mathbb X_k)_{k\in\N}$ a family of sets, we denote by $\pi_i:\prod_{k\in\N} \mathbb X_k\to \mathbb X_i$ the projection map on $\mathbb X_i$ given by $\pi_i((x_k)_{k\in\N})=x_i$.
\newline
$\bullet$ We shall denote by $x\cdot y$ the usual scalar product in $\R^d$ and $|z|$ the Euclidean distance of $z\in \R^d$.
\newline
$\bullet$ The $d\ge 1$ dimensional torus $\T^d=\R^d/\Z^d$ is endowed with the natural distance $|x-y|_{\T^d}=\inf_{p\in\Z^d}|x-y-p|$. We shall forget the index $\T^d$ when there is no ambiguity. We also define the exponential map $\exp :\T^d\times\R^d\to \R^d$ by $\exp(x,v)=x+v \text{ modulo } \Z^d$.
\newline
$\bullet$ Given $(\mathbb X,d)$ and $(\mathbb Y,d')$ two metric spaces which are endowed with their respective Borel $\sigma$-algebra, $\mu$ a measure on $\mathbb X$ and $T:\mathbb X\to \mathbb Y$ a measurable map we define the push forward measure  of $\mu$ by $T$ as $\mu_{\sharp}T(A)=\mu((T^{-1}(A))$ for every Borel sets $A$ of $\mathbb Y$.
\newline
$\bullet$ For $(\mathbb X,d)$ a metric space, we write $\mathcal P(\mathbb X)$ the set of probability measures on $\mathbb X$. 
\newline
Given $\mu,\nu \in\mathcal P(\mathbb X)$, we denote by $\Pi(\mu,\nu)$ the set of couplings between $\mu$ and $\nu$ that is the set of probability measures $\gamma$ on $\mathbb X\times \mathbb X$ such that $(\pi_1)_\sharp\gamma=\mu$ and $(\pi_2)_\sharp\gamma=\nu$.
\newline
For every $p\ge 1$ we define the $p$-Wasserstein space $(\mathcal P_p(\mathbb X),W_p)$ which is the set of probability measures $\mu$ on $\mathbb X$ such that there exists $x_0\in\mathbb X$, $$\int_{\mathbb X}d(x_0,x)^p \mu(dx)<+\infty.$$
endowed with the $p$-Wasserstein distance defined by $$W_p(\mu,\nu)^p=\inf_{\pi\in\Pi(\mu,\nu)}\int_{\mathbb X\times\mathbb X}d(x,y)^p \pi(dx,dy).$$
If $\mathbb X$ is a Polish space then $(\mathcal P_p(\mathbb X),W_p)$ is also a Polish space (see for instance \cite{villani2009optimal}).
\newline
$\bullet$ Given $y\in (\mathbb T^d)^N$ we define the empirical distribution of $y$ as $$\mu_y^N=\frac{1}{N}\sum_{i=1}^N\delta_{y_i}\in\mesT.$$
\newline
$\bullet$ For $f:\mathbb X\to \R^d$ a function we write $\|f\|_\infty:=\sup_{x\in \mathbb X} |f(x)|$
\newline
$\bullet$ A modulus of continuity $\omega:[0,+\infty)\to[0,+\infty)$ is a non-decreasing concave function such that $\omega(0)=0$ 
\newline
$\bullet$ If $(\mathbb X,d)$ is a Polish (i.e. complete separable  metric) space then for every $T>0$ the space $C([0,T],\mathbb X)$ is also a Polish space endowed with the supremum distance $\mathcal D(f,g)=\sup_{t\in [0,T]} d(f(t),g(t))$. 
\newline
$\bullet$ Let $(\mathbb X,d)$ be a metric space. We denote by $C_b(\mathbb X)$ the set of continuous and bounded functions from $\mathbb X$ to $\R$. We also write $BL(\mathbb X)$ the functions of $C_b(\mathbb X)$ that are Lipschitz.
\newline 
$\bullet$ A subset $\mathcal G$ of $C_b(\mathbb X)$ is said to be well-separating if
\begin{enumerate}
\item{$\mathcal G$ contains the constant functions.}
\item{$\mathcal G$ is stable by finite pointwise minima i.e. $g_1,g_2\in\mathcal G$ implies $\min(g_1,g_2)\in \mathcal G$.}
\item{$\mathcal G$ separates points of $\mathbb X$ in the sense that for every $x,y\in\mathbb X$ with $x\ne y$ and $a,b\in\R$ there exists $g\in\mathcal G$ such that $g(x)=a$ and $g(y)=b$.}
\end{enumerate}
$\bullet$ A function $f:(\T^d)^N\to \R$ is said to be symmetric if for all permutations $\sigma$ of $\{1,...,N\}$ we have $f(x_1,...,x_N)=f(x_{\sigma(1)},...,x_{\sigma(N)})$.
\newline
$\bullet$ A sequence of symmetric functions $f_N:(\T^d)^N\to \R$ converges uniformly toward $f:\mesT \to \R$ if $$\sup_{x_1,...,x_N\in(\T^d)^N}\left|f\left(\cfrac{1}{N}\sum_{i=1}^N\delta_{x_i}\right)-f_N(x_1,...,x_N)\right|\underset{N\to+\infty}{\longrightarrow} 0.$$
$\bullet$ A function $\mathcal U:\mesTdeux\to\R$ is H-differentiable at $\mu\in \mesTdeux$ if there exists $\phi\in L^2(\T^d,\R^d)$ and a modulus of continuity $\omega$ such that, for any $\gamma\in\mathcal P_2(\T^d\times\R^d)$ with $(\pi_1)_\sharp \gamma=\mu$ we have 
$$\left|\mathcal U(\exp_\sharp\gamma)-\mathcal U(\mu)-\int_{\T^d\times\R^d}\phi(x)\cdot \, v \,\gamma(dx,dv)\right|\le \omega(C_{\T^d}(\gamma))C_{\T^d}(\gamma),$$
where $C_{\T^d}(\gamma)^2=\int_{\T^d\times\R^d}|v|^2\gamma(dx,dv)$. This function $\phi$ is denoted by $D_\mu\mathcal U(\mu)$. We also recall that for $\mathcal U:\mesTdeux \to\R$ which is H-differentiable everywhere, if we define $u:(\T^d)^N\to\R$ by $u(x)=\mathcal U(\mu_x^N)$ then $u$ is differentiable everywhere and for all $x\in(\T^d)^N$ we have $$\nabla_{x_i}u(x)=\cfrac{1}{N}D_\mu\mathcal U(\mu_x^N)(x_i),$$
see for instance \cite{bertucci2026analysis,cardaliaguet2010notes}.
\newline
$\bullet$ All the random variables are defined on a same probability space $(\Omega,\mathcal F,\mP)$.

\section{Some generalities about large deviations}
In all this section, $(\mathbb X,d)$ is a Polish (complete, separable metric) space.

\subsection{Definitions from large deviations theory}
We recall some classical definitions of large deviations \cite{dembo2009large}.

\begin{definition}
A rate function is a lower semicontinuous (in short lsc) function $I: \mathbb X\to ]-\infty,+\infty]$. A rate function $I$ is called a good rate function if for all $M\in\R$, $\{x\in\mathbb X,I(x)\le M\}$ is a compact subset of $\mathbb X$.
\end{definition}
We now give the definition of a large deviations principle.
\begin{definition}
\label{def: large dev}
Let $(X_N)_{N\ge 1}$ be a family of random variables defined on a same probability space with values in $\mathbb X$ and $(\varepsilon_N)_{N\ge1}$ be a sequence of positive real numbers diverging to $+\infty$. Let also $I$ be a rate function on $\mathbb X$. 
\newline
The sequence $(X_N)_{N\ge 1}$ is said to satisfy a weak large deviations principle (in short WLDP) at speed $\varepsilon_N$ with rate function $I$ if for every open subsets $O$ of $\mathbb X$ and for every compact subsets $K$ of $\mathbb X$ we have
\begin{align*}
-\inf_{x\in O}I(x)&\le \underset{N\to +\infty}{\liminf}\cfrac{1}{\varepsilon_N}\log(\mP(X_N\in O))\\
-\inf_{x\in K}I(x)&\ge \underset{N\to +\infty}{\limsup}\cfrac{1}{\varepsilon_N}\log(\mP(X_N\in K)).
\end{align*}
\newline
The sequence $(X_N)_{N\ge 1}$ is said to satisfy a large deviations principle (in short LDP) at speed $\varepsilon_N$ with good rate function $I$  if for every open subsets $O$ of $\mathbb X$ and for every closed subsets $F$ of $\mathbb X$ we have 
\begin{align*}
-\inf_{x\in O}I(x)&\le \underset{N\to +\infty}{\liminf}\cfrac{1}{\varepsilon_N}\log(\mP(X_N \in O))\\
-\inf_{x\in F}I(x)&\ge \underset{N\to +\infty}{\limsup}\cfrac{1}{\varepsilon_N}\log(\mP(X_N\in F)).
\end{align*}
\end{definition}
\begin{remark}
\label{remark:identiI}
If the sequence $(X_N)_{N\in\N}$ satisfies a LDP with rate function $I$ we can easily check that for all $x\in \mathbb X$, $$-I(x)=\underset{\gamma\to 0}{\lim}\,\underset{N\to +\infty}{\liminf}\,\cfrac{1}{\varepsilon_N}\log(\mP(X_N \in B(x,\gamma)))=\underset{\gamma\to 0}{\lim}\,\underset{N\to +\infty}{\limsup}\,\cfrac{1}{\varepsilon_N}\log(\mP(X_N \in \overline{B(x,\gamma)})).$$
\end{remark}
\begin{remark}
Let us recall that if $(X_N)_{N\ge 1}$ satisfies an LDP at speed $\varepsilon_N$ with good rate function $I$, then choosing $\mathbb X$ as a closed set and open set in the definition of the LDP, we obtain that $\inf_\mathbb X I=0$ and we can show that this infimum is reached using that $I$ is a good rate function.
\end{remark}

\subsection{Exponential tightness and Prokhorov type theorem}
We now introduce the standard notion of exponential tightness that plays the same role in large deviations theory as the tightness for the convergence of random variables \cite{dembo2009large}.
\begin{definition}
A sequence of random variables $(X_N)_{N\ge 1}$ is said to be exponentially tight at speed $\varepsilon_N$ if for every $L>0$ there exists a compact $K_L$ of $\mathbb X$ such that 
$$\underset{N\to +\infty}{\limsup} \frac{1}{\varepsilon_N} \log (\mP(X_N\notin K_L))\le -L.$$
\end{definition}
The exponential tightness is a standard notion in the theory of large deviations since it allows to go from a WLDP to an LDP (see Section 1 of \cite{dembo2009large}). 
\begin{proposition}
Assume that $(X_N)_{N\ge 1}$ is a sequence of random variables with values in $\mathbb X$ satisfying a WLDP with rate function $I$ at speed $\varepsilon_N$ and which is exponentially tight at speed $\varepsilon_N$. Then $I$ is a good rate function and $(X_N)_{N\ge 1}$ satisfies an LDP with good rate function $I$ at speed $\varepsilon_N$. 
\end{proposition}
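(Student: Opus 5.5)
The proof has two parts: first show that $I$ is a good rate function, then upgrade the compact upper bound to closed sets. Throughout, fix $L>0$ and let $K_L$ be the compact set given by exponential tightness.

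\textbf{Goodness of $I$.} Let $x\notin K_L$. Since $K_L$ is compact and $x\notin K_L$, there is $\gamma>0$ with $B(x,\gamma)\cap K_L=\emptyset$.

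The WLDP lower bound applied to the open set $B(x,\gamma)$ then gives
$$-I(x)\le -\inf_{B(x,\gamma)}I\le \liminf_{N}\frac{1}{\varepsilon_N}\log\mP(X_N\in B(x,\gamma))\le \limsup_N\frac{1}{\varepsilon_N}\log \mP(X_N\notin K_L)\le -L,$$
so $I(x)\ge L$. It follows that for every $M<L$,
$$\{I\le M\}\subset K_L.$$
Since $L$ is arbitrary, each sublevel set $\{I\le M\}$ lies in some compact set. Because $I$ is lower semicontinuous, $\{I\le M\}$ is closed, hence compact. So $I$ is a good rate function.

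\textbf{Upper bound on closed sets.} Let $F$ be closed. Then
$$\mP(X_N\in F)\le \mP(X_N\in F\cap K_L)+\mP(X_N\notin K_L).$$
The set $F\cap K_L$ is compact, so the WLDP upper bound applies to it. We also use the elementary estimate
$$\limsup_N \tfrac{1}{\varepsilon_N}\log(a_N+b_N)=\max\Bigl(\limsup_N \tfrac{1}{\varepsilon_N}\log a_N,\ \limsup_N \tfrac{1}{\varepsilon_N}\log b_N\Bigr),$$
which follows from $a_N+b_N\le 2\max(a_N,b_N)$ together with $\log 2/\varepsilon_N\to0$. Combining these,
$$\limsup_N\frac{1}{\varepsilon_N}\log\mP(X_N\in F)\le \max\Bigl(-\inf_{F\cap K_L} I,\,-L\Bigr)\le \max\Bigl(-\inf_F I,\,-L\Bigr).$$
Letting $L\to+\infty$ yields $-\inf_F I$. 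The case $\inf_F I=+\infty$ is included, since the bound then tends to $-\infty$.

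The lower bound on open sets is exactly the WLDP lower bound, so nothing more is needed there. Hence $(X_N)_{N\ge 1}$ satisfies an LDP at speed $\varepsilon_N$ with good rate function $I$.

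The only mildly delicate points are the $\log$-of-sum estimate and the convention for infima equal to $+\infty$. Neither is a real obstacle.
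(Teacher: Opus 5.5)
Your proof is correct. The paper gives no argument of its own and simply defers to Section 1 of \cite{dembo2009large}, where this is Lemma 1.2.18, proved exactly as you do: apply the open-set lower bound off $K_L$, and split $F$ into $F\cap K_L$ and $K_L^c$ using the log-of-sum estimate, which the paper itself invokes in Appendix~\ref{Appendix:A}. The only cosmetic difference is that you could apply the lower bound directly to the open set $K_L^c$ to get $\inf_{K_L^c} I\ge L$ at once, instead of going through the balls $B(x,\gamma)$.
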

Another standard application of the exponential tightness is the following Prokhorov type theorem (Lemma 4.1.23 in \cite{dembo2009large}). 
\begin{theorem}
\label{thm:prokhorovlargedev}
Let $(X_N)_{N\ge 1}$ be a sequence of random variables with values in $\mathbb X$ which is exponentially tight at speed $\varepsilon_N$. Then for every subsequence we can extract a subsubsequence of $(X_N)_{N\ge 1}$ that satisfies an LDP at speed $\varepsilon_N$ with a good rate function $I$.
\end{theorem}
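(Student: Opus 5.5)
The plan is the classical one: use a countable base and a diagonal extraction to build candidate limits of the ball probabilities, define $I$ from them, and use exponential tightness to upgrade the resulting weak principle to a full LDP along a subsubsequence.

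\emph{Step 1 (countable base).} Since $\mathbb X$ is Polish, it is separable. Fix a countable dense set $\{x_k\}_{k\ge1}$ and let $\mathcal B=\{B(x_k,r): k\ge1,\ r\in\mathbb Q_{>0}\}$. This is a countable base of the topology.

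\emph{Step 2 (diagonal extraction).} For each $B\in\mathcal B$, the quantity
$$a_N(B)=\frac{1}{\varepsilon_N}\log \mP(X_N\in B)$$
lies in $[-\infty,0]$. By compactness of $[-\infty,0]$ and a Cantor diagonal argument, from any subsequence we extract a subsubsequence $(N_j)$ along which $a_{N_j}(B)\to\mathcal L(B)\in[-\infty,0]$ for every $B\in\mathcal B$ simultaneously.

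\emph{Step 3 (candidate rate function).} Define
$$I(x)=-\inf\{\mathcal L(B): x\in B\in\mathcal B\}.$$
This is the supremum of $-\mathcal L(B)$ over basic neighbourhoods of $x$, and it takes values in $[0,+\infty]$. It is lower semicontinuous: if $I(x)>c$, there is $B\ni x$ with $-\mathcal L(B)>c$, and then every $y\in B$ satisfies $I(y)\ge -\mathcal L(B)>c$.

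\emph{Step 4 (lower bound on open sets).} Let $O$ be open and $x\in O$. Choose $B\in\mathcal B$ with $x\in B\subset O$. Then
$$\liminf_j a_{N_j}(O)\ge \mathcal L(B)\ge -I(x).$$
Taking the supremum over $x\in O$ gives $\liminf_j a_{N_j}(O)\ge -\inf_O I$.

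\emph{Step 5 (upper bound on compact sets).} Let $K$ be compact and fix $\delta>0$. For each $x\in K$, choose $B_x\in\mathcal B$ containing $x$ with $\mathcal L(B_x)\le -\min(I(x),1/\delta)+\delta$. By compactness, finitely many $B_{x_1},\dots,B_{x_m}$ cover $K$. The principle of the largest term gives
$$\limsup_j a_{N_j}(K)\le \max_i \mathcal L(B_{x_i})\le -\min\Bigl(\inf_K I,\ \tfrac1\delta\Bigr)+\delta.$$
Letting $\delta\to0$ yields $\limsup_j a_{N_j}(K)\le -\inf_K I$. Together with Step 4, this is the WLDP along $(N_j)$ with rate function $I$.

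\emph{Step 6 (conclusion).} Exponential tightness passes to the subsubsequence $(X_{N_j})$. The preceding Proposition (WLDP plus exponential tightness) then shows that $I$ is a good rate function and that $(X_{N_j})$ satisfies a full LDP with it.

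The main delicate point is Step 5. It requires handling the case $I=+\infty$ through the truncation at $1/\delta$, and applying the finite union bound with the normalisation $\frac{1}{\varepsilon_N}\log(m\max_i p_i)$, where the factor $m$ is absorbed because $\varepsilon_N\to\infty$. Everything else is bookkeeping.
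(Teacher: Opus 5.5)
Your proof is correct. The paper gives no argument of its own and simply cites Lemma 4.1.23 of \cite{dembo2009large}; your proposal reproduces that standard proof: a countable base, diagonal extraction of the limits $\mathcal L(B)$, the rate function $I(x)=\sup_{x\in B\in\mathcal B}(-\mathcal L(B))$ giving the weak LDP, and then the upgrade to a full LDP with good rate function via exponential tightness.
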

As a consequence, we have the following result. 
\begin{corollary}
\label{coro:prokhorovlargedev}
Let $(X_N)_{N\ge 1}$ be a sequence of random variables with values in $\mathbb X$ which is exponentially tight at speed $\varepsilon_N$. Assume that there exists a function $I$ such that for every subsequence of $(X_N)_{N\ge 1}$ one can extract a further subsequence that satisfies an LDP with good rate function $J$ and such that every such $J$ satisfies $J=I$. Then $(X_N)_{N\ge 1}$ satisfies an LDP at speed $\varepsilon_N$ with good rate function $I$.
\end{corollary}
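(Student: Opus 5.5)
We argue by contradiction, combining Theorem \ref{thm:prokhorovlargedev} with the characterization of the rate function in Remark \ref{remark:identiI}. Suppose $(X_N)_{N\ge 1}$ does not satisfy an LDP at speed $\varepsilon_N$ with good rate function $I$. Then one of the two defining inequalities fails.

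\emph{Case of the lower bound.} There is an open set $O$ such that
$$\liminf_{N}\varepsilon_N^{-1}\log \mP(X_N\in O)<-\inf_O I.$$
Choose a subsequence $(N_k)$ along which $\varepsilon_{N_k}^{-1}\log\mP(X_{N_k}\in O)$ converges to this $\liminf$. Any further subsequence then has the same limit along it.

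\emph{Case of the upper bound.} There is a closed set $F$ such that
$$\limsup_N \varepsilon_N^{-1}\log\mP(X_N\in F)>-\inf_F I.$$
Choose a subsequence realizing this $\limsup$, so that every further subsequence has the same limit.

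In either case, exponential tightness of the whole sequence is inherited by $(X_{N_k})$: the same compacts $K_L$ work, since a $\limsup$ along a subsequence is at most the full $\limsup$. By hypothesis we may therefore extract a further subsequence $(X_{N_{k_j}})$ satisfying an LDP at speed $\varepsilon_{N_{k_j}}$ with a good rate function $J$, and the assumption gives $J=I$. Apply the LDP bounds along $N_{k_j}$ to $O$ or to $F$. This yields
$$\lim_j \varepsilon_{N_{k_j}}^{-1}\log\mP(X_{N_{k_j}}\in O)\ge -\inf_O I \qquad\text{or}\qquad \lim_j \varepsilon_{N_{k_j}}^{-1}\log\mP(X_{N_{k_j}}\in F)\le -\inf_F I.$$
This contradicts the choice of the subsequence, since along it the limit equals the original $\liminf$ (respectively $\limsup$). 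Hence both bounds hold for the full sequence.

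It remains to check that $I$ is a good rate function. This is immediate, because $I$ coincides with some $J$ obtained from Theorem \ref{thm:prokhorovlargedev} applied to any subsequence, and such a $J$ is good by hypothesis. Alternatively, one can combine the WLDP just established with exponential tightness and invoke the preceding proposition.

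The only subtle point is the bookkeeping of the speeds. The sub-subsequential LDP holds at the restricted speed $\varepsilon_{N_{k_j}}$, and the $\liminf$/$\limsup$ along a subsequence must be compared with the full ones in the correct direction. Selecting the subsequence so that the quantity of interest actually converges to the offending value removes this issue.
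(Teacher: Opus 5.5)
Your proof is correct and follows essentially the same argument as the paper. You argue by contradiction, pass to a subsequence witnessing the failure of the lower (or upper) bound, extract a further subsequence with an LDP whose rate function must be $I$, and contradict the choice of subsequence; the only cosmetic differences are that the paper uses a fixed margin $\gamma>0$ rather than a subsequence along which the quantity converges, and that you explicitly check that $I$ is good, which the paper leaves implicit.
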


\begin{proof}
Assume that $(X_N)_{N\ge 1}$ does not satisfy an LDP at speed $\varepsilon_N$ with good rate function $I$. For instance, assume that there exists an open subset $O$ of $\mathbb X$ such that $$-\inf_{x\in O}I(x)>\underset{N\to +\infty}{\liminf}\cfrac{1}{\varepsilon_N}\log(\mP(X_N\in O))$$ (we could do the same reasoning if the property with a closed subset failed).
By definition of the infimum limit, we can choose a small $\gamma>0$ and a subsequence $\phi_\gamma$ such that for every $N\ge 1$, $$-\inf_{x\in O}I(x)-\gamma>\cfrac{1}{\varepsilon_{\phi_{\gamma(N)}}}\log(\mP(X_{\phi_\gamma(N)}\in O)).$$
By Theorem \ref{thm:prokhorovlargedev}, we can extract a subsequence $\eta$ of this subsequence such that along this subsequence $(X_N)_{N\ge 1}$ satisfies an LDP with the good rate function $I$ thanks to the hypothesis.
Applying the large deviations property for open subsets along this subsequence yields 
$$-\inf_{x\in O}I(x)-\gamma\ge \underset{N\to +\infty}{\liminf}\cfrac{1}{\varepsilon_{\phi_{\gamma(\eta(N))}}}\log(\mP(X_{\phi_{\gamma(\eta(N)}}\in O))\ge-\inf_{x\in O}I(x).$$ 
This gives a contradiction.
\end{proof}
Another consequence of exponential tightness is the following result that allows to view large deviations as a generalization of the Laplace method in infinite dimension. This point of view can be explained through the Varadhan Lemma and the Bryc inverse Varadhan Lemma \cite{Varadhan1984LargeDA,dembo2009large}. 
\begin{theorem}[Chapter 4 \cite{dembo2009large} ]
\label{thm:largedevvaradhan}
Let $(X_N)_{N\ge 1}$ be a sequence of random variables with values in $\mathbb X$.
\newline
$\bullet$  Assume that $(X_N)_{N\ge 1}$ satisfies an LDP at speed $\varepsilon_N$ with good rate function  $I$ and let $f:\mathbb X\to \R$ be a continuous function. Assume furthermore that 
$$\underset{M\to +\infty}{\lim}\underset{N\to +\infty}{\lim}\cfrac{1}{\varepsilon_N}\log\left(\E\left(\exp\left(\varepsilon_N f(X_N)\right)\mathds{1}_{f(X_N)\ge M}\right)\right)=-\infty.$$
Then we have $$\underset{N\to +\infty}{\lim}\cfrac{1}{\varepsilon_N}\log\left(\E\left(\exp\left(\varepsilon_N f(X_N)\right)\right)\right)=\sup_{x \in \mathbb X}\{f(x)-I(x)\}.$$
$\bullet$ Conversely, assume that $(X_N)_{N\ge 1}$ is exponentially tight at speed $\varepsilon_N$ and that for every $f\in C_b(\mathbb X)$ the following limit exists $$\Lambda^f:=\underset{N\to +\infty}{\lim}\cfrac{1}{\varepsilon_N}\log\left(\E\left(\exp\left(\varepsilon_N f(X_N)\right)\right)\right).$$ 
Then, $(X_N)_{N\ge 1}$ satisfies an LDP at speed $\varepsilon_N$ with good rate function 
$$I(x)=\sup_{f\in C_b(\mathbb X)}\{f(x)-\Lambda^f\}.$$
Moreover, if $\Lambda^f$ exists for every $f\in \mathcal G$ with $\mathcal G$ a well-separating class of functions on $\mathbb X$, then $\Lambda^f$ exists for every $f\in C_b(\mathbb X)$ and so the previous conclusions hold.  
\end{theorem}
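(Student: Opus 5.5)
Theorem \ref{thm:largedevvaradhan} is the classical Varadhan / Bryc package. I would prove its three assertions in turn, using only Theorem \ref{thm:prokhorovlargedev}, Corollary \ref{coro:prokhorovlargedev} and Remark \ref{remark:identiI}.

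\emph{Varadhan's lemma.} For the lower bound, fix $x$ and $\delta>0$ and use continuity of $f$ to choose $\gamma$ with $f\ge f(x)-\delta$ on $B(x,\gamma)$. Then
\[
\E\left[e^{\varepsilon_N f(X_N)}\right]\ge e^{\varepsilon_N(f(x)-\delta)}\,\mP(X_N\in B(x,\gamma)),
\]
and the LDP lower bound gives $\liminf\ge f(x)-I(x)-\delta$. For the upper bound, split the expectation on $\{f(X_N)\ge M\}$ and its complement; the first part is negligible by the tail hypothesis once $M$ is large. On $\{f< M\}$, fix $\alpha<\infty$ and cover the compact level set $\{I\le\alpha\}$ by finitely many closed balls $\overline{B(x_j,\gamma_j)}$ on which $f\le f(x_j)+\delta$ and $I$ is nearly minimized, which is possible by lower semicontinuity of $I$. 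The complement $F$ of their union is closed with $\inf_F I>\alpha$, and on it $f\le M$. The LDP upper bound applied to each closed set, together with $\limsup\frac1{\varepsilon_N}\log(\sum a_N^j)=\max_j\limsup\frac1{\varepsilon_N}\log a_N^j$, gives $\limsup\le\max(\sup(f-I)+2\delta,\,M-\alpha)$. Letting $\alpha\to\infty$, then $\delta\to0$, then $M\to\infty$ concludes.

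\emph{Bryc's lemma.} By Theorem \ref{thm:prokhorovlargedev}, every subsequence has a further subsequence satisfying an LDP with some good rate function $J$. Along it, the first part (with $f$ bounded, so the tail condition is trivial) gives $\Lambda^f=\sup_x\{f(x)-J(x)\}$ for all $f\in C_b$. It therefore suffices to show $J(x)=\sup_{f\in C_b}\{f(x)-\Lambda^f\}$ and then apply Corollary \ref{coro:prokhorovlargedev}. The inequality $\ge$ is immediate from $\Lambda^f\ge f(x)-J(x)$. For $\le$, take truncated cones $f_{k,m}(y)=-m\min(1,k\,d(x,y))$. If $J(x)<\infty$, then $\sup_y\{f_{k,m}(y)-J(y)\}\le\max(-\inf_{B(x,1/k)}J,-m)\to -J(x)$ as $k,m\to\infty$ by lower semicontinuity of $J$. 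Hence $f_{k,m}(x)-\Lambda^{f_{k,m}}\to J(x)$, and the case $J(x)=\infty$ is handled the same way.

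\emph{Well-separating class.} This is the step I expect to be the main obstacle. The plan is to show that any two subsequential good rate functions $J_1,J_2$ coincide. We know that $\sup(g-J_1)=\sup(g-J_2)$ for every $g\in\mathcal G$, and I would extend this identity to all of $C_b$ via the minimum-stability of $\mathcal G$. For fixed $x$ and $f\in C_b$, separation of points plus stability under finite minima let one build, for each $y$ in a compact level set $\{J_1\le\alpha\}$, a function $g_{y}\in\mathcal G$ with $g_{y}(x)=f(x)$ and $g_{y}(y)<f(y)+\delta$. By continuity, $g_y<f+\delta$ on a neighborhood of $y$, and compactness lets us take a finite minimum $g\in\mathcal G$ with $g\le f+\delta$ on $\{J_1\le\alpha\}\cup\{J_2\le\alpha\}$ and $g(x)=f(x)$. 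Adding a constant and minimizing with a large negative constant controls $g$ off these compact sets. This gives $J_1(x)\ge f(x)-\sup(g-J_1)-o(1)$, and applying it to the cone functions above yields $J_1\ge J_2$; by symmetry $J_1=J_2$. Existence of $\Lambda^f$ for all $f\in C_b$ then follows, since along every subsequence the limit is $\sup(f-J)$ with the same $J$. The delicate points are the bookkeeping of the constants in this approximation and the need for lower semicontinuity and compactness of the level sets to pass the $\delta$'s to zero.
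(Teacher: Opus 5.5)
The paper gives no proof of this statement; it quotes it from Dembo--Zeitouni, Chapter 4, so there is no internal argument to compare yours with. Your reconstruction is correct. Its structure (exponential tightness, extraction via Theorem \ref{thm:prokhorovlargedev}, identification of every subsequential rate function through Varadhan's lemma and truncated cones, then Corollary \ref{coro:prokhorovlargedev}) is the same toolkit the paper uses in Section 2 and in Appendix B for Proposition \ref{prop:largedevmarginals}. Your two-parameter cones $-m\min(1,k\,d(x,\cdot))$ play the role of its $-k\min(d(x_i,\cdot),1)$.

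Two small slips need fixing; neither is a real gap.

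\emph{Varadhan upper bound.} The complement of a finite union of \emph{closed} balls is open, not closed. Extract the finite subcover from the open balls $B(x_j,\gamma_j)$ and set $F=\mathbb X\setminus\bigcup_j B(x_j,\gamma_j)$. This set is closed and disjoint from $\{I\le\alpha\}$, so $\inf_F I\ge\alpha$. Then apply the LDP upper bound to the closures $\overline{B(x_j,\gamma_j)}$. These closures keep your two properties if the radii were chosen so that the closed balls satisfy them.

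\emph{Well-separating step.} Minimizing with a large \emph{negative} constant would destroy $g(x)=f(x)$. What you need is a cap from above at a level independent of $\alpha$, for instance $\tilde g=\min\bigl(g,\sup_{\mathbb X} f+\delta\bigr)$. This function lies in $\mathcal G$ because $\mathcal G$ contains constants and is stable under finite minima. It still satisfies $\tilde g(x)=f(x)$ and $\tilde g\le f+\delta$ on $\{J_2\le\alpha\}$. Then
\[
J_1(x)\ge \tilde g(x)-\sup(\tilde g-J_1)=f(x)-\sup(\tilde g-J_2)\ge f(x)-\max\bigl(\sup(f-J_2)+\delta,\ \sup_{\mathbb X} f+\delta-\alpha\bigr).
\]
The middle equality is exactly where $\tilde g\in\mathcal G$ is used, since both suprema equal $\Lambda^{\tilde g}$. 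Letting $\alpha\to\infty$ and then $\delta\to0$ gives $J_1(x)\ge f(x)-\sup(f-J_2)$ for every $f\in C_b(\mathbb X)$. The cones then yield $J_1\ge J_2$, and symmetry gives equality.

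The ``$-o(1)$'' in your sentence is really this $\delta$. The inequality you wrote, $J_1(x)\ge f(x)-\sup(g-J_1)$, is trivial; the content lies in passing from $\sup(g-J_1)$ to $\sup(f-J_2)+\delta$.
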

\begin{remark}
A standard example of such well-separating $\mathcal G$ that shall be mentioned later on is the set $BL(\mathbb X)$ of Lipschitz and bounded functions on $\mathbb X$.
\end{remark}
\subsection{Large deviations for continuous in time processes}
\label{sec:Large deviations for continuous in time processes}
We can obtain an LDP for random variables with values in $\mathbb Y=C([0,T],\mathbb X)$ if we know that the finite dimensional marginals satisfy an LDP. This result is a consequence of what is known is the literature as the abstract LDP for projective limits (see Section 4.6 of \cite{dembo2009large}). 
We detail more concretely for this particular case how it works for the sake of completeness of the reader following the ideas of the Chapter 5 of \cite{feng2006large} in Appendix \Ref{Appendix:A}. 
\begin{theorem}
\label{thm:ldpmarginalstoproces}
Let $(X_N)_{N\ge 1}$ be a sequence of random variables with values in $\mathbb Y$ which is exponentially tight at speed $\varepsilon_N$ and assume that for all $m\ge 1$, and $0\le t_1<...<t_m\le T$ the marginals $(X_N(t_1),...,X_N(t_m))_{N\ge 1}$ satisfy an LDP in $\mathbb X^m$ at speed $\varepsilon_N$ with good rate function $I_{t_1,...,t_m}$. Then $(X_N)_{N\ge 1}$ satisfies an LDP in $\mathbb Y$ at speed $\varepsilon_N$ with good rate function 
\begin{equation}
\label{eq:ratefunctionprocess}
I(f)=\sup_{1\le i\le m}\sup_{0\le t_1<...<t_m\le T} I_{t_1,...,t_m}(f(t_1),...,f(t_m)).
\end{equation}
\end{theorem}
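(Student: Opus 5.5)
The argument combines Corollary \ref{coro:prokhorovlargedev} with an identification of the rate function through its finite dimensional marginals. Since $(X_N)_{N\ge1}$ is exponentially tight in $\mathbb Y$, Theorem \ref{thm:prokhorovlargedev} shows that every subsequence has a further subsequence satisfying an LDP in $\mathbb Y$ at speed $\varepsilon_N$, with some good rate function $J$. By Corollary \ref{coro:prokhorovlargedev}, it therefore suffices to show that any such $J$ coincides with the function $I$ in \eqref{eq:ratefunctionprocess}, where $I(f)$ is the supremum over all finite families of times.

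Fix such a subsequence and its good rate function $J$. For $0\le t_1<\dots<t_m\le T$, the evaluation map $\pi_{t_1,\dots,t_m}:\mathbb Y\to\mathbb X^m$, $f\mapsto(f(t_1),\dots,f(t_m))$, is continuous for the sup distance $\mathcal D$. The contraction principle therefore applies: along the subsequence the marginals satisfy an LDP with good rate function
$$\tilde J_{t_1,\dots,t_m}(x)=\inf\{J(f): \pi_{t_1,\dots,t_m}(f)=x\}.$$
By hypothesis, the full sequence of marginals, and hence the subsequence, satisfies an LDP with good rate function $I_{t_1,\dots,t_m}$. Rate functions of an LDP are unique by Remark \ref{remark:identiI}, so $\tilde J_{t_1,\dots,t_m}=I_{t_1,\dots,t_m}$. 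This immediately gives $I_{t_1,\dots,t_m}(f(t_1),\dots,f(t_m))\le J(f)$, and taking the supremum yields $I\le J$.

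For the reverse inequality $J(f)\le I(f)$, I would use Remark \ref{remark:identiI} together with the lower semicontinuity and goodness of $J$. We may assume $I(f)=M<\infty$. Take a dense sequence of times $D=\{s_1,s_2,\dots\}$ in $[0,T]$ containing $T$, and let $F_k=\{s_1,\dots,s_k\}$. For each $k$ there is $g_k$ with $\pi_{F_k}(g_k)=\pi_{F_k}(f)$ and $J(g_k)=\tilde J_{F_k}(\pi_{F_k} f)\le M$; the infimum is attained because $J$ is good. The $g_k$ all lie in the compact level set $\{J\le M\}$, so a subsequence converges in $\mathbb Y$ to some $g$, and lower semicontinuity gives $J(g)\le M$. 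For each fixed $j$ we have $g_k(s_j)=f(s_j)$ once $k\ge j$, so $g=f$ on $D$. Since both functions are continuous and $D$ is dense, $g=f$, hence $J(f)\le M=I(f)$. This shows $J=I$ on $\{I<\infty\}$ and trivially elsewhere, and Corollary \ref{coro:prokhorovlargedev} concludes that the whole sequence satisfies the LDP with rate function $I$.

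The main obstacle is this compactness step. It relies on the level sets of $J$ being compact in $\mathbb Y$, where compactness amounts to equicontinuity in Arzel\`a--Ascoli form, and on the infimum in the contraction being attained. Both follow from the goodness of $J$ supplied by Theorem \ref{thm:prokhorovlargedev}. Without that, one would have to pass through neighbourhoods $B(f,\gamma)$ and control the oscillation of paths between the grid points by exponential tightness, as in \cite{feng2006large}.
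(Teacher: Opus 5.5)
Your proof is correct. It shares the paper's outer structure (exponential tightness, subsequential LDPs via Theorem \ref{thm:prokhorovlargedev}, and identification of every subsequential rate function $J$ so that Corollary \ref{coro:prokhorovlargedev} applies), but it obtains the inequality $J\le I$ by a different argument.

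For $J\ge I$, the paper (Lemma \ref{lemma:largedevconttime1}) combines the inclusion $B(f,\gamma)\subset\{g:\,(g(t_1),\dots,g(t_m))\in B((f(t_1),\dots,f(t_m)),\gamma)\}$ with Remark \ref{remark:identiI}. This is your contraction-principle step written by hand.

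For $J\le I$, the paper argues probabilistically, exactly along the lines you mention as the alternative in your last paragraph. Lemma \ref{lemma:largedevconttime2} describes compacts of $\mathbb Y$ via Arzel\`a--Ascoli and shows that on such a compact, closeness at finitely many grid times forces uniform closeness. Lemma \ref{lemma:largedevconttime3} then combines this with the exponential-tightness compacts $K_L$ and the identity for $\limsup_N c_N\log(a_N+b_N)$. Letting $L\to\infty$, this gives $\limsup_N\varepsilon_N^{-1}\log\mP(X_N\in B(f,\gamma))\ge -I(f)$.

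You instead work entirely at the level of the rate function. Goodness of $J$ gives compact level sets. Functions $g_k$ that agree with $f$ on finer and finer finite sets of times then have a limit point $g$, and lower semicontinuity plus density of the times forces $g=f$ and $J(f)\le I(f)$.

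Your route is shorter and purely deterministic. It needs neither the explicit description of compacts of $\mathbb Y$, nor the grid-approximation lemma, nor the largest-term identity. In effect it is the Dawson--G\"artner identification upgraded by goodness. The paper's route estimates probabilities of balls directly, in the Feng--Kurtz style, and makes visible how exponential tightness controls paths between observation times.

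A minor simplification: you do not need the infimum in the contraction formula to be attained. Choosing $g_k$ with $\pi_{F_k}(g_k)=\pi_{F_k}(f)$ and $J(g_k)\le M+1/k$, all lying in the compact set $\{J\le M+1\}$, works equally well.
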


\subsection{Large deviations on product spaces.}
\label{sec:Large deviations on product spaces.}
As explained in Section \ref{sec:Large deviations for continuous in time processes}, to study the rate function of a family of continuous in time random variables we only have to study the rate function of its finite dimensional marginals. 
By the Varadhan/Le Bryc Principle stated in Theorem \ref{thm:largedevvaradhan}, to characterize the good rate function for the LDP of finite dimensional marginals of a sequence of random variables $(X_N)_{N\ge 1}$ with values in $C([0,T],\mathbb X)$, it is sufficient to show that for every $m\ge 1$, for every $F\in C_b(\mathbb X^m)$ that are Lipschitz continuous the following quantity is uniquely characterized
\begin{equation}
\label{eq:laplacetransfofinidimcomplet}
\underset{N\to +\infty}{\lim}\cfrac{1}{\varepsilon_N}\log\left(\E\left(\exp\left(\varepsilon_N F(X(t_1),...,X(t_m))\right)\right)\right).
\end{equation}
However, we shall justify in this section that we can actually restrict to a certain class of test functions $F$ to characterize the rate function of the finite dimensional marginals. Indeed, for instance in some good spaces such as $\mathbb X=\R^d$ it is known that the linear functions $F_\lambda(x)=\lambda\cdot x$ for $\lambda\in (\R^d)^m$ are sufficient.
\newline
Here, we shall prove that we can restrict the study to the test functions $F$ of the form $F(x_1,...,x_m)=\sum_{i=1}^{m} f_i(x_i)$ where $f_i\in BL(\mathbb X)$ for all $1\le i\le m$. 
\begin{proposition}
\label{prop:largedevmarginals}
Let $(X_N^1,...,X_N^m)_{N\ge 1}$ be a family of random variables with values in $\mathbb X^m$. Assume that $(X_N^1,...,X_N^m)_{N\ge 1}$ satisfies an LDP with good rate function $I_m:\mathbb X^m\to[0,+\infty]$ at speed $\varepsilon_N$.
Then, for every $f_1,...,f_m\in BL(\mathbb X)$ we have
\begin{equation}
\label{eq:varhandanfinidim}
\begin{split}
\Lambda_m(f_1,...,f_m):=&\underset{N\to +\infty}{\lim}\cfrac{1}{\varepsilon_N}\log\left(\E\left(\exp\left(\varepsilon_N f_1(X_N^1)+...+\varepsilon_N f_m(X_N^m)\right)\right)\right)\\=&\sup_{x_1,...,x_m\in\mathbb X}\left[\sum_{i=1}^m f_i(x_i)-I_m(x_1,...,x_m)\right].
\end{split}
\end{equation}
Moreover, the knowledge of $\Lambda_m(f_1,...,f_m)$ for every $f_1,...,f_m\in BL(\mathbb X)$ uniquely characterizes $I_m$ and we can more explicitly write the inversion formula for every $(x_i)_{1\le i\le m}\in\mathbb X^m$
\begin{equation}
\label{eq:invformula}
I_m(x_1,...,x_m)=\sup_{f_1,...,f_m \in BL(\mathbb X)}\left[\sum_{i=1}^m f_i(x_i)-\Lambda_m(f_1,...,f_m)\right]
\end{equation}
\end{proposition}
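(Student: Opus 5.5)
The proof has two parts. The first part, the Laplace-type formula \eqref{eq:varhandanfinidim}, follows directly from the first item of Theorem \ref{thm:largedevvaradhan}, applied in the Polish space $\mathbb X^m$ to the function $F(x_1,\dots,x_m)=\sum_{i=1}^m f_i(x_i)$. Since each $f_i\in BL(\mathbb X)$, the function $F$ is continuous and bounded, say $|F|\le M_0$. Then for every $M>M_0$ the indicator $\mathds 1_{F(X_N)\ge M}$ vanishes identically, so the expectation in the tail condition is zero and its logarithm is $-\infty$. The tail hypothesis therefore holds trivially, and Theorem \ref{thm:largedevvaradhan} gives both the existence of the limit $\Lambda_m(f_1,\dots,f_m)$ and its value $\sup_{\mathbb X^m}[F-I_m]$.

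The second part is the inversion formula \eqref{eq:invformula}. The inequality ``$\ge$'' is immediate from the first part: for any $f_i$,
\[
\Lambda_m(f_1,\dots,f_m)\ge \sum_i f_i(x_i)-I_m(x_1,\dots,x_m),
\]
so the right-hand side of \eqref{eq:invformula} is at most $I_m(x)$.

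For ``$\le$'', fix $x=(x_1,\dots,x_m)$ and a level $a<I_m(x)$; if $I_m(x)=+\infty$, take $a$ arbitrarily large. The goal is to find $f_1,\dots,f_m\in BL(\mathbb X)$ with $\sum_i f_i(x_i)-\Lambda_m(f)\ge a-\varepsilon$. By lower semicontinuity of $I_m$ there is $\delta>0$ such that $I_m>a$ on the product of balls $\prod_i B(x_i,\delta)$, where we use the max-metric on $\mathbb X^m$. For $K>0$ large, define
\[
f_i(y)=-K\min\bigl(d(y,x_i),\delta\bigr)/\delta .
\]
Each $f_i$ is bounded and Lipschitz, and $f_i(x_i)=0$. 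Now split $\mathbb X^m$ into two regions.
\begin{itemize}
\item On $\prod_i B(x_i,\delta)$ we have $\sum_i f_i\le 0$ and $I_m>a$, so $\sum_i f_i-I_m<-a$.
\item Outside this product, some coordinate satisfies $d(y_j,x_j)\ge\delta$. Then $f_j(y_j)=-K$ while the other $f_i\le 0$, and $I_m\ge 0$, so $\sum_i f_i-I_m\le -K$.
\end{itemize}
Hence $\Lambda_m(f)\le \max(-a,-K)=-a$ once $K\ge a$. This gives $\sum_i f_i(x_i)-\Lambda_m(f)\ge a$, and letting $a\uparrow I_m(x)$ yields ``$\le$''.

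Uniqueness of $I_m$ follows at once, since \eqref{eq:invformula} expresses $I_m$ solely in terms of the values $\Lambda_m(f_1,\dots,f_m)$.

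The step needing the most care is the ``$\le$'' direction: the test function must be of separated form $\sum_i f_i(x_i)$ rather than a general Lipschitz function on $\mathbb X^m$. The construction above handles this because a point lies outside the product of balls exactly when at least one coordinate does. We also use $I_m\ge 0$, which holds because $I_m$ is a rate function with values in $[0,+\infty]$ by assumption.
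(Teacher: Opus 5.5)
Your proof is correct. The first part (Varadhan's lemma, with the tail condition vacuous because $F=\sum_i f_i$ is bounded) and the easy inequality coincide with the paper.

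For the key inequality $I_m(x)\le\sup_f[\sum_i f_i(x_i)-\Lambda_m(f)]$ your route differs from the paper's. You use lower semicontinuity once, topologically, to get a product neighborhood $\prod_i B(x_i,\delta)$ on which $I_m>a$. A single test function $f_i=-K\min(d(\cdot,x_i),\delta)/\delta$, adapted to that $\delta$, then bounds $\sup_{\mathbb X^m}[\sum_i f_i-I_m]$ by $-a$ in one step.

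The paper instead fixes the universal family $f_i^k(y)=-k\min(d(x_i,y),1)$, which does not depend on $I_m$. It takes near-minimizers $y^k$ of $k\sum_i\min(d(x_i,y_i),1)+I_m(y)$, shows $y^k\to x$ using $I_m\ge 0$, and applies sequential lower semicontinuity. The case $I_m(x)=+\infty$ is treated separately by contradiction.

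Your argument is shorter and handles the finite and infinite cases uniformly. The paper's argument yields the stronger identity $\lim_k\Lambda_m(f_1^k,\dots,f_m^k)=-I_m(x)$ for an explicit family. It reuses this in Lemma~\ref{lemma:rewriteratefunctionmtuple}, with $f_i^k=-kW_2(\cdot,\nu_i)$.

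You can recover that identity from your construction. For $\delta\le 1$ and $k\ge K/\delta$ one has $f_i^k\le f_i$, so monotonicity of $\Lambda_m$ gives $\limsup_k\Lambda_m(f^k)\le -a$ for every $a<I_m(x)$. The reverse bound $\Lambda_m(f^k)\ge -I_m(x)$ is immediate by evaluating at $y=x$.
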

Let us mention that this result is not an immediate consequence of the Varadhan/Le Bryc principle since this set of test functions is not stable by pointwise minima and so is not well-separating. Nonetheless this result can also be viewed as a consequence of Section 3.3 of \cite{feng2006large}. We give an elementary  proof of this result in Appendix \ref{Appendix:B}.

\section{A viscosity solution approach to large deviations of particle systems}
We recall that $b:\T^d\times\mesTdeux\to\R^d$ is Lipschitz with constant $K_b>0$ and $\sigma_N>0$ goes to 0 when $N$ goes to infinity. We are interested in the following system of particles on $\T^d$
\begin{equation}
\label{eq:systemofparticles}
dx_t^i=b(x_t^i,\mu_t^N)dt+\sigma_N dB_t^i \mathrm{ \,\,modulo\,\, }\Z^d, \, 1\le i\le N
\end{equation}
where $(B^i)_{i\in\N}$ are iid standard Brownian motions and $\mu_t^N=N^{-1}\sum_{1\le i\le N}\delta_{x_t^i}$ is the empirical distribution of the particles.
\newline
As explained in the introduction, we shall prove an LDP at speed $\varepsilon_N:=N/\sigma_N^2$ of the sequence of random variables $(X_N)_{N\ge 1}$ defined by 
\begin{equation}
\label{eq:randomvariables}
\begin{split}
X_N \,:\, (\Omega,\mathcal F ,\mP)&\to C([0,T],\mesTdeux)\\
\omega&\mapsto \left(t\mapsto \frac{1}{N}\sum_{i=1}^N\delta_{x_t^i(\omega)}\right),
\end{split}
\end{equation}
where $(x_t^i)_{1\le i\le N}$ is a strong solution of \eqref{eq:systemofparticles}.
We shall follow a usual strategy as it was also presented in Chapter 3, 4 and 5 of \cite{feng2006large} or in \cite{kraaij2018large}.
We first show in Section \ref{subsection:exptightness} that $(X_N)_{N\ge 1}$ is exponentially tight at speed $N/\sigma_N^2$ in $C([0,T],\mesTdeux)$.
Having in mind Corollary \ref{coro:prokhorovlargedev}, since $(X_N)_{N\ge 1}$ is exponentially tight, it suffices to show the uniqueness of the good rate function up to extraction to prove the complete LDP. To lighten notation, we will forget the extraction and we shall assume that $(X_N)_{N\ge 1}$ satisfies an LDP with good rate function $I$ and we shall characterize uniquely $I$.
\newline
First, since the LDP of marginals characterized the LDP of the process by Theorem \ref{thm:ldpmarginalstoproces}, to prove the uniqueness of $I$ it suffices to show the uniqueness of good rate functions for the LDP of finite dimensional marginals. 
\newline
Then, by the Varadhan/Le Bryc Principle stated in Theorem \ref{thm:largedevvaradhan} and Proposition \ref{prop:largedevmarginals}, to prove the uniqueness of the good rate function for the LDP of finite dimensional marginals, it is sufficient to show that for every $m\ge 1$, for every $f_1,...,f_m\in C_b(\mesTdeux)$ that are Lipschitz the following quantity exists and is uniquely determined 

\begin{equation}
\label{eq:laplacetransfofinidim}
\underset{N\to +\infty}{\lim}\cfrac{1}{\varepsilon_N}\log\left(\E_x\left[\exp\left(\varepsilon_N f_1(X_N(t_1))+...+\varepsilon_N f_m(X_N(t_m))\right)\right]\right).
\end{equation}

\subsection{Exponential tightness of the system of particles}
\label{subsection:exptightness}
The goal of this section is to prove that $(X_N)_{N\ge 1}$ is exponentially tight at speed $N/\sigma_N^2$. The proof is based on a concentration result for system of particles in mean field interaction as it was also studied in \cite{delarue2020master}.
\newline
First let us give some notation. For $x\in\T^d$ we define, in this section only, the modulus of its sinus by $$|\sin(x)|=\pi^{-1}\left(\sum_{k=1}^d|\sin(\pi x_k)|^2\right)^{1/2}.$$ 
Let $\zeta:\T^d\times\T^d\to \R^d$ be the unique element of $[-2^{-1},2^{-1})^d$ such that $\zeta(x,y)=x-y-k(x,y)$ where $k(x,y)\in\Z^d$. In particular we have $|\zeta(x,y)|=d_{\T^d}(x,y)$ for all $(x,y)\in(\T^d)^2$.
\newline
For all $(x,y)\in(\T^d)^2$ we have $|\sin(y-x)|=|\sin(\zeta(x,y))|$ and using that $\zeta(x,y)\in [-2^{-1},2^{-1})^d$ and the inequality $$\frac{2}{\pi}|x|\le |\sin(x)|\le |x|$$ for $x\in [0,\pi/2]$ we obtain that there exists a constant $C>0$ which only depends on $d$ such that for all $(x,y)\in(\T^d)^2$
\begin{equation}
\label{eq:equivalencenorme} 
C^{-1} d_{\T^d}^2(x,y)\le |\sin(x-y)|^2\le C d_ {\T^d}^2(x,y).
\end{equation}
Hence $\tilde d_{\T^d}(x,y)=|\sin(x-y)|$ defines a distance equivalent to the standard distance on $\T^d$ but which is more regular ($ (\tilde d_{\T^d})^2$ is a smooth function).
\newline
Moreover, in this section we write $(\tilde x_t^i)_{t\ge 0,\,1\le i\le N}$ the solution of the stochastic differential equations $$d\tilde x_t^i=b(x_t^i,\mu_{x_t}^N)dt+\sigma_N dB_t^i,\, 1\le i\le N$$
where $(B^i)$ is a family of iid standard Brownian motions in $\R^d$ and $x_t^i\in\T^d$ which satisfies $x_t^i=\tilde x_t^i \text{ modulo }\Z^d$ for all $t\ge 0$ and $1\le i\le N$.
\newline
In order to prove the exponential tightness of $(X_N)_{N\ge 1}$ we first explicit compact subsets of $C([0,T],\mesTdeux)$. The following lemma is a direct consequence of the Arzela-Ascoli theorem. 
\begin{lemma}
\label{lemma:compactsubset}
For every $L>0$, the following sets are compact subsets of $C([0,T],\mesTdeux)$
\begin{align*}K_L&:=\bigcap_n K_{L,n}\\
&:=\bigcap_n\left\{\mu\in C([0,T],\mesTdeux)\,\Big|\,\sup_{|t-s|\le\delta_n}W_2^2(\mu(t),\mu(s))\le L\gamma_n\right\},
\end{align*} with $\delta_n$ and $\gamma_n$ two positive sequences that goes to 0. 
\end{lemma}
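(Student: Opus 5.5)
The plan is to apply the Arzela--Ascoli theorem in $C([0,T],\mesTdeux)$. The target space $\mesTdeux$ is itself compact, so the set is relatively compact as soon as it is equicontinuous; closedness is then checked separately.

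\textbf{Pointwise relative compactness.} Since $\T^d$ is a compact metric space, $\mesTdeux$ coincides with $\mesT$. The $W_2$ distance metrizes weak convergence there, and Prokhorov's theorem makes $(\mesTdeux,W_2)$ compact. Hence, for every $t\in[0,T]$, the set $\{\mu(t):\mu\in K_L\}$ lies in a compact set. This is exactly the pointwise hypothesis of Arzela--Ascoli for continuous maps from the compact interval $[0,T]$ into a metric space.

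\textbf{Uniform equicontinuity.} Fix $\varepsilon>0$. Since $\gamma_n\to 0$, choose $n$ with $L\gamma_n\le \varepsilon^2$ and set $\delta=\delta_n>0$. Then every $\mu\in K_L\subset K_{L,n}$ satisfies
\[
\sup_{|t-s|\le\delta}W_2(\mu(t),\mu(s))\le \varepsilon .
\]
This gives uniform equicontinuity of $K_L$. Arzela--Ascoli then shows that $K_L$ is relatively compact for the supremum distance $\mathcal D$.

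\textbf{Closedness.} Each $K_{L,n}$ is closed. Indeed, if $\mu^k\to\mu$ uniformly with $\mu^k\in K_{L,n}$, then for each fixed pair $(t,s)$ with $|t-s|\le\delta_n$ we have $W_2(\mu^k(t),\mu^k(s))\to W_2(\mu(t),\mu(s))$. The inequality $W_2^2(\mu(t),\mu(s))\le L\gamma_n$ therefore passes to the limit, and taking the supremum over such pairs preserves it. An intersection of closed sets is closed, so $K_L$ is closed, and combined with relative compactness this makes $K_L$ compact.

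There is no real obstacle here. The only point needing care is the compactness of $(\mesTdeux,W_2)$, which holds because $\T^d$ is bounded, so that second moments are automatic and weak convergence is equivalent to $W_2$ convergence. No tightness or moment condition on the curves is required, unlike the $\R^d$ case.
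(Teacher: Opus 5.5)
Your proposal is correct and follows the same route as the paper, which states the lemma as a direct consequence of the Arzel\`a--Ascoli theorem without further detail. You supply the details the paper leaves implicit: compactness of $(\mesTdeux,W_2)$, uniform equicontinuity from $\gamma_n\to 0$, and closedness of each $K_{L,n}$.
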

We now state a concentration lemma to deal with oscillations of the process $(X_N)_{N\ge 1}$.
\begin{lemma}
\label{prop:exptightness}
There exists $M_0$ that only depend on $b$, $d$  and $\sup_{N\ge 1}\sigma_N$ and constants $C_1,C_2>0$ that only depends on $d$ and $T$ such that for every $\delta\le \min(1,T)$, $0<\alpha<1$, $N\ge 1$ and for every $M\ge M_0$ we have 
$$\mP\left(\sup_{|t-s|\le \delta}W_2(X_N(t),X_N(s))^2\ge 18M\delta^\alpha\right)\le \cfrac{C_1}{\delta}\exp\left(-\frac{C_2 N M^2\delta^{2\alpha-1}}{\sigma_N^2}\right).$$ 
\end{lemma}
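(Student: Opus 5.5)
We bound $W_2(X_N(t),X_N(s))^2$ by the synchronous coupling $\frac1N\sum_i \tilde d_{\T^d}(x_t^i,x_s^i)^2$ and then control the oscillations of this average by a discretisation in time plus an exponential martingale (Bernstein-type) inequality.

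\textbf{Step 1: reduction to a grid.} Use the coupling $\frac1N\sum_i\delta_{(x^i_t,x^i_s)}$ and \eqref{eq:equivalencenorme} to get
$$W_2(X_N(t),X_N(s))^2\le \frac1N\sum_{i=1}^N d_{\T^d}(x_t^i,x_s^i)^2\le C\,\frac1N\sum_{i=1}^N \tilde d_{\T^d}(x_t^i,x_s^i)^2 .$$
Cut $[0,T]$ into about $T/\delta$ intervals $[k\delta,(k+1)\delta]$. If $|t-s|\le\delta$, both $t$ and $s$ lie in $[k\delta,(k+2)\delta]$ for some $k$. The triangle inequality for $W_2$ then gives
$$W_2(X_N(t),X_N(s))\le 2\sup_{r\in[k\delta,(k+2)\delta]}W_2(X_N(r),X_N(k\delta)).$$
Hence it suffices to bound, for each fixed $k$,
$$\mP\Bigl(\sup_{r\in[k\delta,k\delta+2\delta]}\frac1N\sum_i \tilde d^2(x^i_r,x^i_{k\delta})\ge cM\delta^\alpha\Bigr),$$
and to take a union bound over the roughly $T/\delta$ values of $k$. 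This union bound is the source of the prefactor $C_1/\delta$; the constant $18$ absorbs the factors $2^2$ and $C$.

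\textbf{Step 2: Itô on a fixed window.} Fix $k$ and set $\tau=k\delta$. Apply Itô's formula to the smooth function $r\mapsto \tilde d^2(x_r^i,x_\tau^i)=\pi^{-2}\sum_\ell\sin^2(\pi(\tilde x^i_{r,\ell}-\tilde x^i_{\tau,\ell}))$. Its gradient is bounded, and its Hessian is bounded by a constant depending only on $d$. Since $b$ is Lipschitz on the compact $\T^d\times\mesTdeux$, it is bounded. Summing over $i$ and dividing by $N$ gives
$$Y_r:=\frac1N\sum_i \tilde d^2(x^i_r,x^i_\tau)\le A(r-\tau)+\frac{\sigma_N}{N}\sum_i\int_\tau^r g_i\cdot dB^i,$$
with $|g_i|\le c_d$ and $A$ depending on $\|b\|_\infty$, $d$ and $\sup_N\sigma_N$. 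For $r-\tau\le2\delta$ the drift part is at most $2A\delta\le 2A\delta^\alpha$, because $\delta\le1$ and $\alpha<1$. Choosing $M_0$ with $M_0\ge 4A/c$ makes the drift at most half the threshold, which leaves the martingale $\mathcal M_r=\frac{\sigma_N}{N}\sum_i\int_\tau^r g_i\cdot dB^i$. Its quadratic variation satisfies $\langle \mathcal M\rangle_{\tau+2\delta}\le 2c_d^2\sigma_N^2\delta/N$.

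\textbf{Step 3: exponential martingale bound.} For $\lambda>0$, $\exp(\lambda\mathcal M_r-\frac{\lambda^2}{2}\langle\mathcal M\rangle_r)$ is a supermartingale. Doob's maximal inequality then gives
$$\mP\Bigl(\sup_{r\le\tau+2\delta}\mathcal M_r\ge a\Bigr)\le \exp\Bigl(-\lambda a+\lambda^2 c_d^2\sigma_N^2\delta/N\Bigr).$$
Optimising in $\lambda$ yields $\exp(-Na^2/(4c_d^2\sigma_N^2\delta))$. Taking $a=cM\delta^\alpha/2$ gives $\exp(-C_2NM^2\delta^{2\alpha-1}/\sigma_N^2)$, which is the claimed bound. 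The union bound of Step 1 multiplies this by the factor $C_1/\delta$.

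\textbf{Main obstacle.} The delicate point is handling the torus in Itô's formula: one cannot use $|x-y|^2$ directly because of the modulo $\Z^d$ identification. The smooth surrogate $\tilde d^2$ resolves this, thanks to its bounded derivatives and its comparability \eqref{eq:equivalencenorme} with $d^2_{\T^d}$. After that, the only remaining care is bookkeeping of constants, making sure $M_0$ depends only on $b$, $d$ and $\sup_N\sigma_N$, and $C_1,C_2$ only on $d$ and $T$.
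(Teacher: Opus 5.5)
Your proposal is correct and follows essentially the same route as the paper: a synchronous coupling with the smooth surrogate $\tilde d_{\T^d}^2$, Itô's formula on each window of a $\delta$-grid with the drift absorbed by taking $M\ge M_0$, an exponential bound for the stochastic integral whose quadratic variation is $O(\sigma_N^2\delta/N)$, and a union bound over the $O(T/\delta)$ windows. The differences are cosmetic: the paper uses windows of length $\delta$ with a three-term triangle inequality (hence its factor $9$) and cites a standard concentration inequality from Anderson--Guionnet--Zeitouni, whereas you use windows of length $2\delta$ (factor $4$) and derive the bound yourself from the exponential supermartingale and Doob's maximal inequality.
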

\begin{proof}
In this proof $C>0$ is a constant that only depends on $d$ and that will be updated along the proof. We shall cut the interval $[0,T]$ in intervals of size $\delta$; $0=t_0<t_1=\delta<...< t_k=k\delta\le t_{k+1}=T$ with $k=\lfloor \frac{T}{\delta}\rfloor$.
\newline
For all $0\le u\le T$, $0\le k\le K$ and $1\le i\le N$, we define for $u\in[t_k,t_{k+1}]$ $$z_u^i=(2\pi)^{-1}(\sin(2\pi(\tilde x_u^{j,i}-\tilde x_{t_k}^{j,i}))_{1\le j\le d}.$$
That there exists a constant $C$ that only depends on $d$ such that for all $0\le k\le K$, $\sup_{t_k\le u\le T,\,1\le i\le N}|z_u^i|\le C$.
\newline
First we notice that 
$$\sup_{|t-s|\le\delta} W_2(X_N(t),X_N(s))^2\le 9 \max_{0\le k\le K}\sup_{t\in[t_k,t_{k+1}]}W_2(X_N(t),X_N(t_k))^2.$$
Applying Itô's formula and using \eqref{eq:equivalencenorme} yields for $t\in[t_k,t_{k+1}]$
\begin{align*}
W_2(X_N(t),X_N(t_k))^2&\le \cfrac{C}{N}\sum_{i=1}^N|\sin(\tilde{x}_t^i-\tilde{x}_{t_k}^i)|^2\\
&\le\cfrac{C}{N}\sum_{i=1}^N\left[ 2\int_{t_k}^t z_u^i.b(x_u^i,\mu_u^N)du+2\sigma_N\int_{t_k}^t z_u^i.dB_u^i+\sigma_N^2(t-t_k)\right]\\
&\le M_0 |t_k-t|+\cfrac{C\sigma_N}{N}\sum_{i=1}^N\int_{t_k}^t z_u^i.dB_u^i,
\end{align*}
where $M_0$ is a constant that only depends on $d$, $b$ and $\sup_N \sigma_N$.
\newline 
Hence, we get 
\begin{align*}
\mP\left(\sup_{|t-s|\le \delta}W_2(X_N(t),X_N(s))^2\ge 18 M\delta^\alpha\right)&\le\\
& \sum_{k=0}^K\mP\left(\sup_{t_k\le t\le t_{k+1}}\left|\cfrac{C\sigma_N}{N}\sum_{i=1}^N\int_{t_k}^t z_u^i.dB_u^i\right|\ge M\delta^\alpha\right) \\
&\le \cfrac{C_1}{\delta}\exp\left(-\frac{C_2 N M^2\delta^{2\alpha-1}}{\sigma_N^2}\right),
\end{align*}
where the last inequality comes from standard concentration inequality (see for instance Corollary H.13 and Lemma 4.3.14 of \cite{anderson2010}).
\end{proof}

\begin{theorem}
\label{thm:exptightness}
The sequence $(X_N)_{N\ge 1}$ is exponentially tight in $C([0,T],\mesTdeux)$ at speed $N/\sigma_N^2$.
\end{theorem}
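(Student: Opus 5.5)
We combine Lemma \ref{lemma:compactsubset} with the concentration estimate of Lemma \ref{prop:exptightness} through a union bound over a sequence of time scales. Since $\T^d$ is compact, $\mesTdeux$ is compact as well, so pointwise relative compactness is automatic. Compactness in $C([0,T],\mesTdeux)$ therefore reduces to a uniform modulus of continuity, which is exactly what the sets $K_{L,n}$ control.

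\emph{Choice of scales.} Fix $\alpha\in(0,1/2)$, say $\alpha=1/4$, so that $2\alpha-1<0$ and $\delta^{2\alpha-1}\to+\infty$ as $\delta\to 0$. Take $\delta_n=2^{-n}\min(1,T)$ and $\gamma_n=18\,\delta_n^\alpha$; both go to $0$.

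\emph{Union bound.} For $L>0$, set $M=\max(L,M_0)$ and use the compact set $K_{M}=\bigcap_n K_{M,n}$ given by Lemma \ref{lemma:compactsubset}. Applying Lemma \ref{prop:exptightness} on each scale gives
$$\mP(X_N\notin K_M)\le \sum_{n\ge 1}\mP\Big(\sup_{|t-s|\le\delta_n}W_2^2(X_N(t),X_N(s))> 18M\delta_n^\alpha\Big)\le \sum_{n\ge1}\frac{C_1}{\delta_n}\exp\Big(-\frac{C_2NM^2\delta_n^{2\alpha-1}}{\sigma_N^2}\Big).$$
Write $\varepsilon_N=N/\sigma_N^2$. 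Because $\delta_n\le 1$ and $2\alpha-1<0$, we have $\delta_n^{2\alpha-1}\ge 1$. Splitting the exponent as $\tfrac12+\tfrac12$, each term is at most
$$\exp\big(-\tfrac12 C_2\varepsilon_N M^2\big)\cdot \frac{C_1}{\delta_n}\exp\big(-\tfrac12 C_2\varepsilon_N M^2\delta_n^{2\alpha-1}\big).$$

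\emph{Summability (the only delicate step).} We need the series $\sum_n \delta_n^{-1}\exp(-c\,\varepsilon_N\delta_n^{2\alpha-1})$ to be bounded uniformly in $N$. Since $\varepsilon_N\to\infty$, we may assume $\varepsilon_N\ge 1$ for $N$ large, and then use $\varepsilon_N\delta_n^{2\alpha-1}\ge \delta_n^{2\alpha-1}$. This reduces the question to the series $\sum_n 2^{n}\exp(-c\,2^{n(1-2\alpha)})$, which converges because the double-exponential decay beats the geometric factor $2^n$. Call its sum $S(M)$.

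\emph{Conclusion.} We obtain
$$\frac1{\varepsilon_N}\log\mP(X_N\notin K_M)\le -\tfrac12C_2M^2+\frac{\log S(M)}{\varepsilon_N}.$$
Taking $\limsup_N$ gives a bound of $-\tfrac12 C_2M^2$. By choosing $M$ large, depending on $L$, this is at most $-L$, which is exponential tightness at speed $N/\sigma_N^2$.
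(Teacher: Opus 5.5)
Your proof is correct and follows essentially the same route as the paper: the compact sets of Lemma~\ref{lemma:compactsubset} with $\alpha=1/4$, a union bound over scales $\delta_n\to 0$ via Lemma~\ref{prop:exptightness}, and a summability estimate uniform in $N$. The differences are minor: you use dyadic scales $\delta_n=2^{-n}\min(1,T)$ instead of the paper's $\delta_n=n^{-2}$ with $n\ge n_0$, you split the exponent explicitly (which makes rigorous what the paper calls a ``direct computation''), and at the end you should take $M\ge\max(M_0,\sqrt{2L/C_2})$ rather than $M=\max(L,M_0)$ so that $-\tfrac12 C_2M^2\le -L$ actually holds.
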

\begin{proof}
Given $L>0$, our goal is to construct a compact $K_L$ of $C([0,T],\mesTdeux)$ such that $$\underset{N\to +\infty}{\limsup} \frac{\sigma_N^2}{N} \log (\mP(X_N\notin K_L))\le -L.$$
We first choose $n_0$ (which only depends on $T$) such that $n_0^{-2}\le \min(1,T)$.
We choose a compact of the form $K_l=\cap_{n\ge n_0}K_{l,n}$ as in Lemma \ref{lemma:compactsubset} with $l$, $\delta_n$ and $\gamma_n$ that shall be chosen later on. 
We get 
\begin{align*}
\mP(X_N\notin K_l)&\le \sum_{n\ge n_0}\mP(X_N\notin K_{l,n})\\
&\le \sum_{n\ge n_0}\mP\left(\sup_{|t-s|\le\delta_n} W_2(X_N(t),X_N(s))^2\ge l\gamma_n\right)
\end{align*}
We choose $l=18M$ (where $M$ shall be chosen later on), $\delta_n=1/n^2$ and $\gamma_n=\delta_n^{1/4}$ and applying Lemma \ref{prop:exptightness} for $M\ge M_0$ and for every $n\ge 1$ yields 
$$\mP\left(\sup_{|t-s|\le n^{-2}}W_2(X_N(t),X_N(s))^2\ge \frac{18M}{\sqrt{n}}\right)\le C_1 n^2\exp\left(\frac{-C_2 \,N M^2n}{\sigma_N^2}\right), $$
where the constants $C_1,C_2>0$ is given in Lemma \ref{prop:exptightness}. 
Hence, we get for $M\ge M_0$
\begin{align*}
\mP(X_N\notin K_{18M})&\le C_1\sum_{n\ge n_0}n^2\exp\left(\frac{-C_2 \,N M^2\,n}{\sigma_N^2}\right).
\end{align*}
A direct computation shows that $$\lim_N \cfrac{\sigma_N^2}{N}\log\left(C_1\sum_{n\ge n_0}n^2\exp\left(\frac{-C_2 \,N M^2\,n}{\sigma_N^2}\right)\right)=-C_2n_0 M^2.$$
So we deduce that for every $M\ge M_0$  $$\underset{N\to +\infty}{\limsup} \frac{\sigma_N^2}{N} \log (\mP(X_N\notin K_{18M}))\le -C_2 n_0 M^2.$$
Choosing $M$ such that $C_2 n_0 M^2=L$ concludes the proof.
\end{proof}

\subsection{Semigroup property}
\label{section:semigroupprop}
\begin{definition}
\label{def:semigroup}
For every $N\ge 1$, we define the transfer function of $X_N$ by $$\Lambda_{N}^f(t,y)=\cfrac{1}{\varepsilon_N}\log\left(\E\left[\exp\left(\varepsilon_N f(X_N(t)\right)|x_0^i=y_i, \forall 1\le i\le N\right]\right), $$
for every $y\in(\mathbb T^d)^N$ and $f\in C_b(\mesTdeux)$.
\end{definition}
\begin{remark}
Let us notice that $f$ can be viewed as the initial condition of $\Lambda_N^f$ since $\Lambda_N^f(0,y)=f(\mu_y^N)$.
\end{remark}
We can now express the quantity \eqref{eq:laplacetransfofinidim} only in term of $\Lambda_N$.
\begin{proposition}
\label{prop:markovproperty}
Let $f_1,...,f_m\in C_b(\T^d)$ be $m\ge 1$ Lipschitz functions and $0\le t_1<t_2<...<t_m\le T$. Then for every $N\ge 1$ and $y\in(\mathbb T^d)^N$, we have the following property
\begin{equation}
\label{eq:towerprop}
\begin{split}
\cfrac{1}{\varepsilon_N}&\log\left(\E\left[\exp\left(\varepsilon_N f_1(X_N(t_1))+...+\varepsilon_N f_m(X_N(t_m))\right)|x_0^i=y_i, \forall 1\le i\le N\right]\right)\\
&=\Lambda_N^{f_1+\Lambda_N^{f_2+...}(t_2-t_1,\bullet)}(t_1,y).
\end{split}
\end{equation}
In an implicit form we can rewrite the previous formula as 
$$\cfrac{1}{\varepsilon_N}\log\left(\E\left[\exp\left(\varepsilon_N f_1(X_N(t_1))+...+\varepsilon_N f_m(X_N(t_m))\right)|x_0^i=y_i, \forall 1\le i\le N\right]\right)=\psi_1^N(y),$$
where the family $(\psi_k^N)_{1\le k\le m}$ is defined recursively backward by
$$\psi_m^N(y) = \Lambda_N^{f_m}(t_m-t_{m-1},y),$$
and, for $k\le m-1$,
$$\psi_k^N(y)=\Lambda_N^{\,f_k+\psi_{k+1}^N}\bigl(t_k-t_{k-1},y\bigr),\qquad t_0=0.$$
\end{proposition}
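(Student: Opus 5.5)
The plan is to argue by backward induction on $m$ using the Markov property of the particle system $(x_t^i)_{1\le i\le N}$, a time-homogeneous diffusion on $(\T^d)^N$. The drift $y\mapsto (b(y_i,\mu_y^N))_i$ is Lipschitz and the noise is additive, so strong existence and uniqueness hold. Let $\mathcal F_t$ be the filtration generated by the Brownian motions and the initial condition. The strong Markov/flow property then states that for every bounded measurable $G:(\T^d)^N\to\R$ and $s,t\ge 0$,
$$\E\left[G(x_{t+s})\,|\,\mathcal F_t\right]=P_sG(x_t),\qquad P_sG(y):=\E\left[G(x_s)\,|\,x_0=y\right].$$
By definition, $\exp(\varepsilon_N\Lambda_N^f(s,y))=P_s\big(e^{\varepsilon_N f(\mu^N_\bullet)}\big)(y)$. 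I also note that $X_N(t)=\mu^N_{x_t}$ is a function of $x_t$, and that this function is symmetric and continuous in $y$.

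For $m=1$ the identity is just Definition \ref{def:semigroup}, with $\psi_1^N=\Lambda_N^{f_1}(t_1,\cdot)$. For the induction step, I condition on $\mathcal F_{t_{m-1}}$:
$$\E\left[e^{\varepsilon_N\sum_{k\le m}f_k(X_N(t_k))}\right]=\E\left[e^{\varepsilon_N\sum_{k\le m-1}f_k(X_N(t_k))}\,\E\left[e^{\varepsilon_N f_m(X_N(t_m))}\,|\,\mathcal F_{t_{m-1}}\right]\right].$$
The first $m-1$ factors are $\mathcal F_{t_{m-1}}$-measurable. The inner conditional expectation equals $\exp\big(\varepsilon_N\Lambda_N^{f_m}(t_m-t_{m-1},x_{t_{m-1}})\big)=\exp(\varepsilon_N\psi_m^N(x_{t_{m-1}}))$. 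Since $\psi_m^N$ is symmetric in $y$, it is a function of $\mu^N_y$, and I write it as $\tilde\psi_m^N(X_N(t_{m-1}))$. The expression therefore becomes the same quantity with $m-1$ times, where $f_{m-1}$ is replaced by $f_{m-1}+\tilde\psi_m^N$. Iterating down to $t_1$ and taking $\frac{1}{\varepsilon_N}\log$ gives the nested formula \eqref{eq:towerprop}. The implicit recursion is the same computation written with $t_0=0$; the time increments $t_k-t_{k-1}$ correspond to conditioning successively at $t_{k-1}$.

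The main point needing care is that $f_k+\psi_{k+1}^N$ is a legitimate test function in Definition \ref{def:semigroup}, which requires a function on the empirical measures that is bounded and continuous.

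1. Boundedness is immediate: $|\Lambda_N^f|\le\|f\|_\infty$.
2. Symmetry follows from exchangeability. The drift $b(y_i,\mu_y^N)$ is permutation-equivariant and the Brownian motions are i.i.d., so the law of $\mu^N_{x_s}$ under $x_0=y$ depends only on $\mu_y^N$.
3. Continuity in $y$ follows from Lipschitz dependence of the strong solution on the initial data (Gronwall, pathwise, for fixed Brownian paths) together with dominated convergence, $f$ being bounded and continuous.

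The function $\tilde\psi$ is thus defined on the set of $N$-point empirical measures. Extending it to all of $\mesTdeux$ is only a notational matter, since $f$ is evaluated only at such measures. Lipschitz regularity is not needed for this identity.
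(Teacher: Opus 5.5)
Your proof is correct and is essentially the paper's argument: the Markov property applied by conditioning at the intermediate times and iterating. The paper writes out only $m=2$, conditioning on $X_N(t_1)$, whereas you peel off the last time via $\mathcal F_{t_{m-1}}$ and induct on $m$, and your checks that $f_k+\psi_{k+1}^N$ is an admissible test function (symmetry by exchangeability so that it depends only on $\mu_y^N$, boundedness, and the remark that Lipschitz regularity plays no role here) supply details the paper leaves implicit.
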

\begin{proof}
This is a straightforward consequence of the Markov Property. 
Let us explicit the case $m=2$ since the general case can be done similarly. Conditioning by the value of $X_N(t_1)$ we get
\begin{align*}
\cfrac{1}{\varepsilon_N}&\log\left(\E\left[\exp\left(\varepsilon_N f_1(X_N(t_1))+\varepsilon_N f_2(X_N(t_2))\right)|x_0^i=y_i, \forall 1\le i\le N\right]\right)\\
&=\cfrac{1}{\varepsilon_N}\log\left(\E\left[\E\left[\exp\left(\varepsilon_N f_1(X_N(t_1))+\varepsilon_N f_2(X_N(t_2))\right)|X(t_1)\right]|x_0^i=y_i, \forall 1\le i\le N\right]\right)\\
&=\cfrac{1}{\varepsilon_N}\log\left(\E\left[\exp\left(\varepsilon_N f_1(X_N(t_1))\E\left[\varepsilon_N f_2(X_N(t_2))\right)|X(t_1)\right]|x_0^i=y_i, \forall 1\le i\le N\right]\right)\\
&=\cfrac{1}{\varepsilon_N}\log\left(\E\left[\exp\left(\varepsilon_N f_1(X_N(t_1))\right)\exp\left(\varepsilon_N\Lambda_N^{f_2}(t_2-t_1,X_N(t_1))\right)|x_0^i=y_i, \forall 1\le i\le N\right]\right)\\
&=\Lambda_N^{f_1+\Lambda_N^{f_2}(t_2-t_1,\bullet)}(t_1,y).
\end{align*}
\end{proof}
As explained at the beginning of this section, our goal is to show that for every $m\ge 1$, for every $f_1,...,f_m\in BL(\mesTdeux)$ the following quantity exists and to characterize it.
\begin{equation}
\label{eq:laplacetransfofinidim2}
\underset{N\to +\infty}{\lim}\cfrac{1}{\varepsilon_N}\log\left(\E\left(\exp\left(\varepsilon_N f_1(X_N(t_1))+...+\varepsilon_N f_m(X_N(t_m))\right)\right)\right).
\end{equation}
Thanks to Proposition \ref{prop:markovproperty}, by bootstrapping we have the following observation.
\begin{claim}
It suffices to characterize the convergence of $\Lambda_N^{g_N}$ where $g_N:(\mathbb T^d)^N\to \R$ is symmetric and converges uniformly toward a Lipschitz function $g:\mesTdeux\to \R$ to characterize \eqref{eq:laplacetransfofinidim2}.
\end{claim}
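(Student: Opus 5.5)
The claim will follow by backward induction on the number $m$ of marginal times, using the recursive representation $(\psi_k^N)_{1\le k\le m}$ of Proposition \ref{prop:markovproperty}. Suppose the following is known. For every $t\in[0,T]$ and every sequence of symmetric functions $g_N:(\T^d)^N\to\R$ converging uniformly to a Lipschitz $g:\mesTdeux\to\R$, the functions $\Lambda_N^{g_N}(t,\cdot)$ converge uniformly, in the sense recalled in the notation section, to a limit $\mathcal U^g(t,\cdot)$ that depends only on $g$ and is again bounded Lipschitz on $\mesTdeux$. I will show that this characterizes \eqref{eq:laplacetransfofinidim2}. (Here $\Lambda_N^{g_N}$ means the transfer function of Definition \ref{def:semigroup} with $f(X_N(t))$ replaced by $g_N(x_t)$.)

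Start with $k=m$. The function $y\mapsto f_m(\mu_y^N)$ is symmetric and trivially converges uniformly to $f_m\in BL(\mesTdeux)$. So $\psi_m^N=\Lambda_N^{f_m}(t_m-t_{m-1},\cdot)$ is symmetric, because the particle system is exchangeable, and converges uniformly to $\psi_m:=\mathcal U^{f_m}(t_m-t_{m-1},\cdot)$. Now assume $\psi_{k+1}^N\to\psi_{k+1}$ uniformly with $\psi_{k+1}$ bounded Lipschitz. Then $g_N:=f_k(\mu^N_\cdot)+\psi_{k+1}^N$ is symmetric and converges uniformly to $f_k+\psi_{k+1}$, which is Lipschitz. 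The hypothesis gives $\psi_k^N\to\psi_k:=\mathcal U^{f_k+\psi_{k+1}}(t_k-t_{k-1},\cdot)$ uniformly. After $m$ steps, $\sup_y|\psi_1^N(y)-\psi_1(\mu_y^N)|\to0$.

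Next, integrate over the initial condition. The tower property applied to Proposition \ref{prop:markovproperty} gives
$$\frac{1}{\varepsilon_N}\log\E\left[\exp\left(\varepsilon_N\sum_{i} f_i(X_N(t_i))\right)\right]=\frac{1}{\varepsilon_N}\log\E\left[\exp\left(\varepsilon_N\psi_1^N(x_0)\right)\right].$$
Because $|\psi_1^N(x_0)-\psi_1(X_N(0))|\le\eta_N\to0$ uniformly in $x_0$, the right-hand side differs by at most $\eta_N$ from $\varepsilon_N^{-1}\log\E[\exp(\varepsilon_N\psi_1(X_N(0)))]$. By the assumption of Theorem \ref{thm:largedev1}, this last quantity converges to $\Lambda_0(\psi_1)$ since $\psi_1\in BL(\mesTdeux)$. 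Hence the limit \eqref{eq:laplacetransfofinidim2} exists and equals $\Lambda_0(\psi_1)$, with $\psi_1$ exactly the nested expression appearing in Theorem \ref{thm:largedev1}.

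Two technical points need care. First, symmetry must be preserved at every step. This holds because the dynamics \eqref{eq:systemofparticles} are invariant under permutation of labels, so $\Lambda_N^{g_N}$ is symmetric whenever $g_N$ is. Second, the limit objects must stay in the admissible class: the induction needs $\psi_{k+1}$ Lipschitz, that is, the limit semigroup must map $BL(\mesTdeux)$ into itself. The second point is the main obstacle. I would establish it by proving a uniform-in-$N$ Lipschitz estimate for $\Lambda_N^{g}(t,\cdot)$ with respect to $W_2$ (in practice, the normalized $\ell^2$ distance on $(\T^d)^N$ after optimal relabeling). The approach is a synchronous coupling of two particle systems driven by the same Brownian motions, combined with the Lipschitz bound $K_b$ on $b$ and Gronwall's lemma, which gives a constant of order $e^{CK_bt}$ independent of $N$. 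This estimate then passes to the uniform limit.
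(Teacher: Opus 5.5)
Your proposal is correct and follows the paper's route. You do a backward induction through the recursion of Proposition \ref{prop:markovproperty}, keep each limit in $BL(\mesTdeux)$ via a uniform-in-$N$ $W_2$-Lipschitz bound (the role of \eqref{hypotheseuniflipschit}), and then integrate against the initial law through $\Lambda_0$, exactly as in Corollary \ref{Coro:convuniforme} and the proof of Theorem \ref{thm:largedev2}. The only minor difference is the source of the Lipschitz bound: you use a pathwise synchronous coupling (the noise cancels) passed directly through the monotone log-Laplace functional, whereas Appendix \ref{Appendix:D} goes through the stochastic control representation; both are Gronwall arguments yielding the same $N$-independent constant.
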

In next sections we shall study the convergence of the rate function in order to completely prove the LDP of $(X_N)_{N\ge 1}$.
\subsection{Formal convergence of the transfer function}
Starting from now, we fix $f:\mesTdeux\to \R$ a Lipschitz function with Lipschitz constant $L_f$ and we want to study the convergence of $(\Lambda_N^f)_{N\ge 1}$.
\newline
The generator of the system of particles is given by $$\mathcal L_N \phi(y)=\sum_{i=1}^Nb(y_i,\mu_y^N).\nabla_{y_i}\phi(y)+\cfrac{\sigma_N^2}{2}\sum_{i=1}^N\Delta_{y_i}\phi(y).$$
By the Feynman-Kac formula, $g_N^f:=\E\left[\exp\left(\varepsilon_N f(X_N(t)\right)|x_0^i=y_i, \forall 1\le i\le N\right]$ is  solution of the following linear PDE:
\begin{equation}
\begin{split}
\left\{ 
  \begin{array}{l} 
    \partial_t g_N^f(t,y)=\mathcal L_N g_N^f,\, \, t>0,\, y\in(\T^d)^N  \\ 
    g_N(0,y)=\exp\left(\varepsilon_N f\left(\mu_y^N\right)\right)
  \end{array} 
\right.
\end{split}
\end{equation}
Using the Cole-Hopf transform $\Lambda_N^f=\frac{1}{\varepsilon_N}\log(g_N^f)$ we obtain that the transfer function satisfies the following Hamilton-Jacobi equation
\begin{equation}
\label{eq:edphofcolepasfinal}
\begin{split}
\left\{ 
  \begin{array}{l} 
    \partial_t \Lambda_N^f(t,y)=\dis\sum_{i=1}^Nb(y_i,\mu_y^N).\nabla_{y_i}\Lambda_N^f(t,y)+\cfrac{\sigma_N^2}{2}\dis\sum_{i=1}^N\Delta_{y_i}\Lambda_N^f(t,y)\\
   \hspace{3.7cm} +\cfrac{\varepsilon_N\sigma_N^2}{2}\dis\sum_{i=1}^N|\nabla_{y_i}\Lambda_N^f(t,y)|^2,\, \, t>0,\, y\in(\T^d)^N  \\ 
    \Lambda_N^f(0,y)=f(\mu_y^N),\,\,y\in (\T^d)^N
  \end{array} 
\right.
\end{split}
\end{equation}
Since $\varepsilon_N=N/\sigma_N^2$, the rate function is solution of 
\begin{equation}
\label{eq:edphofcole}
\begin{split}
\left\{ 
  \begin{array}{l} 
    \partial_t \Lambda_N^f(t,y)=\dis\sum_{i=1}^Nb(y_i,\mu_y^N).\nabla_{y_i}\Lambda_N^f(t,y)+\cfrac{\sigma_N^2}{2}\dis\sum_{i=1}^N\Delta_{y_i}\Lambda_N^f(t,y)\\
   \hspace{4.cm} +\cfrac{N}{2}\dis\sum_{i=1}^N|\nabla_{y_i}\Lambda_N^f(t,y)|^2,\, \, t>0,\, y\in(\T^d)^N  \\ 
    \Lambda_N^f(0,y)=f(\mu_y^N),\,\,y\in (\T^d)^N
  \end{array} 
\right.
\end{split}
\end{equation}
More precisely since $\sigma_N>0$ and $b$ is Lipschitz, $\Lambda_N^f$ is by classical arguments a strong solution of \eqref{eq:edphofcole} in $$C\left([0,\infty)\times (\mathbb{T}^d)^N\right)\bigcap C^{1,2}\left((0,\infty)\times (\mathbb{T}^d)^N\right).$$
Formally, assume that $N$ is large and $\mu_y^N\approx \mu\in \mesT$ and $\Lambda_N^f(t,y)\approx \mathcal U(t,\mu)$. Then we formally have $\partial_t\Lambda_N^f(t,y)\approx \partial_t \mathcal U(t,\mu)$ and $\nabla_{y_i}\Lambda_N^f(t,y)\approx\frac{1}{N}\, D_\mu \mathcal U(t,\mu,y_i)$. Using these approximations in \eqref{eq:edphofcole} and that $(\sigma_N)_{N\ge1}$ goes to 0, we formally obtain that $\mathcal U$ is solution of
\begin{equation}
\begin{split}
\left\{ 
  \begin{array}{l} 
    \partial_t \mathcal U(t,\mu)\approx\cfrac{1}{N}\dis\sum_{i=1}^Nb(y_i,\mu_y^N).D_\mu\mathcal U(t,\mu,y_i)+\cfrac{1}{2N}\dis\sum_{i=1}^N|D_\mu \mathcal U(t,\mu,y_i)|^2,\, \, t>0,\, \mu\in\mesTdeux \\ 
    \mathcal U(0,\mu)=f(\mu),\,\,\mu\in \mesTdeux
  \end{array} 
\right.
\end{split}
\end{equation}
So taking $N$ large, formally the Hamilton-Jacobi equation on the space of probability measures that we obtain at the limit is 
\begin{equation}
\label{eq:limipdeformal}
\begin{split}
\left\{ 
  \begin{array}{l} 
    \partial_t \mathcal U(t,\mu)=\dis\int_{\T^d}b(y,\mu).D_\mu\mathcal U(t,\mu,y)\mu(dy)+\cfrac{1}{2}\dis\int_{\T^d}|D_{\mu}\mathcal U(t,\mu,y)|^2\mu(dy)\\
    \hspace{9.5cm} \, t>0,\, \mu\in\mesTdeux \\ 
    \mathcal U(0,\mu)=f(\mu),\,\,\mu\in \mesTdeux
  \end{array} 
\right.
\end{split}
\end{equation}

\subsection{Viscosity solutions of the limit PDE}
In this section we follow the notation and ideas of \cite{bertucci2026analysis} to study the viscosity solutions of the following PDE set on the space of probability measures \begin{equation}
\label{eq:PDE}
\begin{split}
\partial_t \mathcal U(t,\mu)=\dis\int_{\T^d}b(y,\mu).D_\mu\mathcal U(t,\mu,y)\mu(dy)+\cfrac{1}{2}\dis\int_{\T^d}|D_{\mu}\mathcal U(t,\mu,y)|^2\mu(dy),\, \, t>0,\, \mu\in\mesTdeux .
\end{split}
\end{equation} 
\subsubsection{Definition of viscosity solutions}
The torus $\T^d$ is a smooth manifold but there can be many geodesics that connects $x$ and $y$ in $\T^d$. We recall that the exponential map defined on $\T^d\times \R^d$ is the value at time $t=1$ of the unique geodesic $\theta:[0,1]\to \T^d$ such that $\theta(0)=x$ and $\theta'(0)=v$.
On the torus we explicitly have $\exp(x,v)=x+v \text{ modulo } \Z^d$.
The cost of transport along a plan $\eta\in\mathcal P_2(\T^d\times\R^d)$ is given by $$C_{\T^d}(\eta)^2=\int_{\T^d\times\R^d}|v|^2\eta(dx,dv).$$
\begin{definition}
\label{def:differentialtore}
Let $\mathcal U:\mesT\to \R$ be an upper semicontinuous function (in short usc) and $\mathcal V:\mesT\to \R$ be a lower semicontinuous function (in short lsc). A measurable function $\psi:\T^d\to\mes$ is an element of the super-differential at $\mathcal U$ at $\mu\in \mesT$ if 
\newline
$\bullet$ $$\int_{\T^d}\int_{\R^d}|z|^2\psi_x(dz)\mu(dx)<+\infty.$$
$\bullet$ There exists a modulus of continuity $\omega$ such that for any $\eta\in\mathcal P_2(\T^d\times\R^d)$ such that $(\pi_1)_\sharp \eta=\mu$, for any $\Gamma\in \mathcal P(\T^d\times\R^d\times \R^d)$ such that $(\pi_1,\pi_2)_\sharp\Gamma=\mu(dx)\psi_x(dz)$ and $(\pi_1,\pi_3)_\sharp\Gamma=\eta$, 
\begin{equation}
\label{eq:subdiffdef}
\mathcal U(\exp_\sharp\eta)\le \mathcal U(\mu)+\int_{\T^d\times\R^d\times\R^d}z.v\,\Gamma(dx,dz,dv)+C_{\T^d}(\eta)\omega(C_{\T^d}(\eta)).
\end{equation}
The set of such $\psi$ is denoted by $\partial^+\mathcal U(\mu)$. If $\partial^+\mathcal U(\mu)$ is not empty, we say that $\mathcal U$ is super-differentiable at $\mu$.
\newline
The sub-differential of $\mathcal V$ is similarly defined as $\partial^-\mathcal V(\mu)=\{x\mapsto(-Id)_\sharp\psi_x|\, \psi\in \partial^+\mathcal (-V)(\mu)\}.$ If $\partial^-\mathcal V(\mu)$ is not empty, we say that $\mathcal V$ is sub-differentiable at $\mu$.
\end{definition}
\begin{remark}
\label{rem:extensiontemps}
Let $\mathcal U:[0,T]\times \mesT\to \R$ and $\mathcal V:[0,T]\times \mesT\to \R$. We similarly say that $(\theta,\psi)$ with $\theta\in\R$ and $\psi:\T^d\to\mes$ is an element of the super-differential $\partial^+\mathcal U(t,\mu)$ of $\mathcal U$ at $(t,\mu)\in [0,T]\times \mesT$ if there exists a modulus of continuity $\omega$ such that for any $t'\in [0,T]$ and $\eta\in\mathcal P_2(\T^d\times\R^d)$ such that $(\pi_1)_\sharp \eta=\mu$, for any $\Gamma\in \mathcal P(\T^d\times\R^d\times \R^d)$ such that $(\pi_1,\pi_2)_\sharp\Gamma=\mu(dx)\psi_x(dz)$ and $(\pi_1,\pi_3)_\sharp\Gamma=\eta$, 

\begin{align*}
\mathcal U(t',\exp&_\sharp\eta)\le \mathcal U(t,\mu)+\theta(t'-t)+\\
&\int_{\T^d\times\R^d\times\R^d}z.v\,\Gamma(dx,dz,dv)+\left(\sqrt{|t'-t|^2+C_{\T^d}(\eta)^2}\right)\omega\left(\sqrt{|t'-t|^2+C_{\T^d}(\eta)^2}\right).
\end{align*}
We also define $\partial^-\mathcal V(t,\mu)=\{(-\theta,x\mapsto(-Id)_\sharp\psi_x)|\, (\theta,\psi)\in \partial^+\mathcal (-V)(\mu)\}.$
\end{remark}
Recall that $\zeta:\T^d\times\T^d\to\R^d$ as $\zeta(x,y)=x-y-k(x,y)$ where $k(x,y)\in\Z^d$ has been chosen such that $\zeta(x,y)\in [-2^{-1},2^{-1})^d$ and so in particular $|\zeta(x,y)|=|x-y|_{\T^{d}}$. Let us notice that $\zeta(x,y)$ is a constant speed geodesic joining $y$ to $x$ in time 1 and is measurable.
\newline
We now explicit some elements of the super-differential of $W_2^2$ in view of the doubling of variables technique to establish comparison principles for viscosity solutions. 
\begin{proposition}[Proposition 5.43 in \cite{bertucci2026analysis}]
\label{prop:subdiffW}
For any $\nu_0,\mu_0\in\mesTdeux$, the map $\mu\mapsto W_2(\mu,\nu_0)^2$ is super-differentiable at $\mu_0$ and if $\gamma_0$ is an optimal coupling in $W_2(\mu_0,\nu_0)$, then $$(\pi_1,2\zeta)_\sharp \gamma_0\in \partial^+W_2(.,\nu_0)^2(\mu_0).$$
\end{proposition}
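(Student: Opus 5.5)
The plan is to verify Definition~\ref{def:differentialtore} directly for $\mathcal U(\mu)=W_2(\mu,\nu_0)^2$ at $\mu_0$, with $\psi_x$ the disintegration of $(\pi_1,2\zeta)_\sharp\gamma_0$ with respect to $\mu_0$. In other words, $\psi_x$ is the law of $2\zeta(x,y)$ when $y\sim\gamma_0(dy|x)$. The integrability condition is immediate because $|\zeta|\le \sqrt d/2$ on the torus.

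Fix $\eta$ with first marginal $\mu_0$ and a coupling $\Gamma$ of $\mu_0(dx)\psi_x(dz)$ and $\eta$. I would lift $\Gamma$ to a measure $\tilde\Gamma(dx,dy,dv)$ on $\T^d\times\T^d\times\R^d$ such that $(x,y)\sim\gamma_0$, $(x,v)\sim\eta$, and $z=2\zeta(x,y)$. This is done by gluing $\gamma_0$ and $\Gamma$ along $x$; on each fiber, $z$ is a measurable function of $y$, so the disintegration in $y$ given $(x,z)$ can be taken from $\gamma_0(\cdot|x)$ conditioned on $\zeta(x,y)=z/2$. Then $(\exp(x,v),y)_\sharp\tilde\Gamma$ is a coupling of $\exp_\sharp\eta$ and $\nu_0$, which gives
\[
W_2(\exp_\sharp\eta,\nu_0)^2\le \int |x+v-y|_{\T^d}^2\,\tilde\Gamma(dx,dy,dv).
\]

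The key pointwise estimate is
\[
|x+v-y|_{\T^d}^2\le |\zeta(x,y)+v|^2=|\zeta(x,y)|^2+2\zeta(x,y)\cdot v+|v|^2 .
\]
It holds because $\zeta(x,y)+v$ is one lift of $x+v-y$ modulo $\Z^d$, and the torus distance is the infimum over all lifts. Integrating against $\tilde\Gamma$, I would use $\int|\zeta|^2\,d\gamma_0=W_2(\mu_0,\nu_0)^2$, which holds by optimality of $\gamma_0$. The cross term gives $\int z\cdot v\,\Gamma(dx,dz,dv)$, and the last term is $C_{\T^d}(\eta)^2$. So the inequality \eqref{eq:subdiffdef} holds with $\omega(r)=r$, which is a concave modulus.

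The main obstacle is the gluing step. One must check that the lifted measure really reproduces the prescribed joint law of $(x,z,v)$ given by $\Gamma$, so that the cross term is exactly $\int z\cdot v\,d\Gamma$. Otherwise one only gets an inequality with some other coupling. I would handle this with the disintegration theorem applied twice on Polish spaces. First disintegrate $\Gamma$ with respect to $(x,z)$. Then pick $y$ from $\gamma_0(dy|x)$ conditioned on $\{2\zeta(x,y)=z\}$, which is well defined $\psi_x$-almost surely since $\psi_x$ is exactly the image of $\gamma_0(\cdot|x)$ under $y\mapsto 2\zeta(x,y)$.
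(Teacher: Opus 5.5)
Your proposal is correct and follows the paper's proof. The paper also integrates the pointwise bound $|\exp(x,v)-y|_{\T^d}^2\le |v|^2+|x-y|_{\T^d}^2+2v\cdot\zeta(x,y)$ against a measure obtained by gluing $(\pi_1,\pi_2,2\zeta)_\sharp\gamma_0$ with $\Gamma$ along their common $(x,z)$-marginal, which is exactly your double disintegration (correctly done along $(x,z)$, not along $x$ alone as your first description might suggest).
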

We give a proof for the sake of completeness. 
\begin{proof}
Let $\eta\in \mathcal P_2(\T^d\times\R^d)$ such that $(\pi_1)_\sharp\eta=\mu_0$ and $\Gamma\in\mathcal P(\T^d\times \R^d\times\R^d)$ such that $(\pi_1,\pi_2)_\sharp\Gamma=(\pi_1,2\zeta)_\sharp \gamma_0$ and $(\pi_1,\pi_3)_\sharp\Gamma=\eta$.
Let us notice that for all $x\in\T^d$, $v\in \R^d$ and $y\in\T^d$ we have
\begin{equation}
\label{eq:majorationdistancetore}
\begin{split}
|(\exp(x,v)-y|_{\T^d}^2&=\inf_{k\in\Z^d}|x+v-y-k|^2\\
&\le |x+v-y-k(x,y)|^2\\
&\le |v|^2+|x-y-k(x,y)|^2+ 2v\cdot (x-y-k(x,y)) \\
&=|v|^2+|x-y|_{\T^d}^2+ 2v\cdot \zeta(x,y).
\end{split}
\end{equation}
The idea is to integrate this inequality against a good measure constructed from $\Gamma$. Since $\Gamma$ depends on the variables $(x,z,v)$ and this inequality depends on the variables $(x,y,v)$ we shall construct a measure that extends $\Gamma$ and which depends on the variables $(x,y,z,v)$. 
\newline
First, we define the map $T:\T^d\times\T^d\to \T^d\times\T^d\times\R^d$ by $T(x,y)=(x,y,2\zeta(x,y))$ and we construct the measure $\Lambda=T_\sharp \gamma_0$ which a measure supported on $\{(x,y,z)\,|\, z=2\zeta(x,y)\}$ by construction. 
\newline 
We notice that $(\pi_1,\pi_3)_\sharp \Lambda=(\pi_1,2\zeta)_\sharp \gamma_0=(\pi_1,\pi_2)_\sharp\Gamma$. 
By the Gluing lemma, we can construct a probability measure $\Theta$ on $\T^d\times\T^d\times\R^d\times\R^d$ such that $(\pi_1,\pi_3,\pi_4)_\sharp \Theta=\Gamma$ and $(\pi_1,\pi_2,\pi_3)_\sharp \Theta=\Lambda$.
\newline
We integrate the inequality \eqref{eq:majorationdistancetore} against $\Theta$ to obtain 
\begin{equation}
\begin{split}
\int_{\T^d\times\T^d\times\R^d\times\R^d}|(\exp(x,v)-y|_{\T^d}^2& d\Theta(x,y,z,v)\le\\
&\int_{\T^d\times\T^d\times\R^d\times\R^d}|v|^2+|x-y|^2+ 2v\cdot \zeta(x,y) d\Theta(x,y,z,v)
\end{split}
\end{equation}
Computing each terms individually gives the result.
\end{proof}
\begin{definition}
\label{def:sub/supersol}
A usc (resp. lsc) function $\mathcal U:[0,T]\times \mesTdeux\to\R$ is said to be a viscosity sub-solution (resp. super-solution) of \eqref{eq:PDE} if for any $t\in(0,T],\, \mu\in\mesTdeux$, $(\theta,\psi)\in \partial^+ \mathcal U(t,\mu)$, (resp. $(\theta,\psi)\in \partial^-\mathcal U(t,\mu)$), it holds that 
\begin{equation}
\label{eq:sub/supersol}
\theta-\int_{\T^d\times\R^d}\left(b(x,\mu).z+\cfrac{1}{2}\,|z|^2\right)\psi_x(dz)\mu(dx)\le 0 \,\, (\text{resp. }\ge 0).
\end{equation}
A continuous function $\mathcal U:[0,T]\times\mesTdeux\to\R$ is said to be a viscosity solution of \eqref{eq:PDE} if it is both a sub-solution and a super-solution.
\newline
We say that $\mathcal U_0:\mesT\to\R $ is the initial condition of $\mathcal U$ if $\mathcal U(0,.)=\mathcal U_0$.
\end{definition}
\subsubsection{Comparison principle and uniqueness of the viscosity solution of the limit PDE}
In this part we establish the main result of this section which is the comparison principle for solutions of \eqref{eq:PDE} which in particular implies the uniqueness of viscosity solution of \eqref{eq:PDE}.
\begin{theorem}
Let $\mathcal U$ be a sub-solution of \eqref{eq:PDE} and $\mathcal V$ be a super-solution of \eqref{eq:PDE} such that $\mathcal U(0,.)\le\mathcal V(0,.)$. Then $\mathcal U(t,.)\le \mathcal V(t,.)$ for all $t\ge 0$.
\end{theorem}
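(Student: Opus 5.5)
We argue by contradiction with the doubling of variables technique, using $W_2^2$ as the penalization and Proposition \ref{prop:subdiffW} to produce elements of the super/sub-differentials. Since $\mesTdeux$ is compact ($\T^d$ is compact) and $\mathcal U$ is usc and $\mathcal V$ lsc, both are bounded on $[0,T]\times\mesTdeux$ and maxima of usc functionals are attained; this is what replaces the Ekeland/Stegall variational principle. Suppose $\delta:=\sup_{t,\mu}(\mathcal U(t,\mu)-\mathcal V(t,\mu))>0$. For $\varepsilon,\lambda>0$ small we consider
\[
\Phi(t,s,\mu,\nu)=\mathcal U(t,\mu)-\mathcal V(s,\nu)-\frac{1}{2\varepsilon}W_2(\mu,\nu)^2-\frac{|t-s|^2}{2\varepsilon}-\lambda t ,
\]
which attains its maximum at some $(\bar t,\bar s,\bar\mu,\bar\nu)$. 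The standard argument gives $W_2(\bar\mu,\bar\nu)^2/\varepsilon+|\bar t-\bar s|^2/\varepsilon\to0$ as $\varepsilon\to0$, using only the boundedness and semicontinuity of $\mathcal U,\mathcal V$ on the compact space. Together with $\mathcal U(0,\cdot)\le\mathcal V(0,\cdot)$, this rules out $\bar t=0$ or $\bar s=0$ for $\varepsilon$ small, since the supremum of $\Phi$ stays above $\delta/2$ when $\lambda$ is small.

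Next we extract the test elements. Fix an optimal coupling $\bar\gamma$ for $W_2(\bar\mu,\bar\nu)$. Since $\mu\mapsto\mathcal U(t,\mu)-\frac1{2\varepsilon}W_2(\mu,\bar\nu)^2-\dots$ is maximal at $(\bar t,\bar\mu)$, Proposition \ref{prop:subdiffW} together with the time penalization gives
\[
\Big(\lambda+\tfrac{\bar t-\bar s}{\varepsilon},\,(\pi_1,\tfrac1\varepsilon\zeta)_\sharp\bar\gamma\Big)\in\partial^+\mathcal U(\bar t,\bar\mu).
\]
Here we use that a function touched from above by a super-differentiable function at a max point inherits its super-differential, which is immediate from Definition \ref{def:differentialtore}. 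Symmetrically, $\nu\mapsto -W_2(\bar\mu,\nu)^2$ is sub-differentiable, with element $x\mapsto$ law of $2\zeta(y,x)$ under the disintegration of $\bar\gamma$ in $x=$ second variable, since $\zeta(y,x)=-\zeta(x,y)$ up to the measure-zero boundary ambiguity. This yields
\[
\Big(\tfrac{\bar t-\bar s}{\varepsilon},\,(\pi_2,\tfrac1\varepsilon\zeta)_\sharp\bar\gamma\Big)\in\partial^-\mathcal V(\bar s,\bar\nu).
\]
Plugging these into \eqref{eq:sub/supersol} and subtracting, with $p=\zeta(x,y)/\varepsilon$ evaluated under the same coupling $\bar\gamma(dx,dy)$, gives
\[
\lambda\le\int\big(b(x,\bar\mu)-b(y,\bar\nu)\big)\cdot\frac{\zeta(x,y)}{\varepsilon}\,\bar\gamma(dx,dy).
\]
The quadratic terms $\frac12|p|^2$ cancel exactly because both integrals are pushforwards of the same $\bar\gamma$. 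The Lipschitz bound on $b$ then gives the estimate
\[
\lambda\le K_b\big(|x-y|_{\T^d}+W_2(\bar\mu,\bar\nu)\big)\text{-integrated}\le \frac{2K_b}{\varepsilon}W_2(\bar\mu,\bar\nu)^2\to0,
\]
which is a contradiction. The conclusion holds for each $t\le T$, and any $T$ can be taken.

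The main obstacle is the careful identification of the sub-differential element of $\mathcal V$ at $\bar\nu$ with the same coupling, so that the Hamiltonian terms cancel. In particular, one must justify the measurable choice of $\zeta$ (antisymmetry off the cut locus, which has zero $\bar\gamma$-mass or can be handled by choosing $\zeta$ symmetrically measurable). One must also check that the definition of sub/super-differential with general $\Gamma$ (non-deterministic $\psi_x$) is compatible with adding the modulus from both the $W_2^2$ expansion and the maximality. I would first handle this by writing the inequality \eqref{eq:majorationdistancetore} in the second variable directly, rather than invoking symmetry.
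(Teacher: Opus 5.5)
Your proposal is correct and follows essentially the same route as the paper: doubling of variables with the $W_2^2$ and $|t-s|^2$ penalization on the compact space $[0,T]^2\times(\mesTdeux)^2$, a $\lambda t$ perturbation (the paper instead works with the strict sub-solution $\mathcal U-\delta t$, which is equivalent), test elements from Proposition \ref{prop:subdiffW} built on one common optimal coupling so that the quadratic Hamiltonian terms cancel, and the Lipschitz bound $\lambda\le 2K_b W_2(\bar\mu,\bar\nu)^2/\varepsilon\to 0$. Your plan to obtain the sub-differential element for $\mathcal V$ directly from the second-variable expansion $|x-\exp(y,v)|_{\T^d}^2\le|\zeta(x,y)-v|^2$ is slightly more careful than the paper, which uses $(\pi_2,\varepsilon^{-1}\zeta)_\sharp\gamma_\varepsilon$ without addressing the boundary ambiguity in the choice of $\zeta$.
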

\begin{proof}
Let $\delta>0$ and introduce $\mathcal U_\delta(t,\mu):=\mathcal U(t,\mu)-\delta t$. The function $U_\delta$ is a $\delta$ strict sub-solution in the sense that in the inequality \eqref{eq:sub/supersol} the second term is replaced by $-\delta$.
Assume that we succeed to prove the comparison principle for all $\delta>0$ strict sub-solution. 
Then, we have that for all $\delta>0$, for all $t\ge 0$, $\mathcal U_\delta(t,.)\le \mathcal V(t,.)$. Letting $\delta$ goes to 0 yields the result. 
So, starting from now we assume that $\mathcal U$ is a $\delta>0$ strict sub-solution of \eqref{eq:PDE}.
\newline
As usual we use the technique of doubling the variables and we consider for $\varepsilon>0$ the function $$\phi_\varepsilon(t,s,\mu,\nu):=\mathcal U(t,\mu)-\mathcal V(s,\nu)-\cfrac{1}{2\varepsilon}\,((t-s)^2+W_2^2(\mu,\nu)).$$
Since $[0,T]^2\times\mesT^2$ is compact $M_\varepsilon:=\max_{[0,T]^2\times\mesT^2} \phi_\varepsilon$ exists and we can consider a point of maximum $(t_\varepsilon,s_\varepsilon,\mu_\varepsilon,\nu_\varepsilon)$. 
As usual the penalization goes to 0 \begin{equation}
\label{eq:penalizationgoes0}
\cfrac{(t_\varepsilon-s_\varepsilon)^2}{ \varepsilon}\underset{\varepsilon\to 0}{\longrightarrow}0 \text{  and  } \cfrac{W_2^2(\mu_\varepsilon,\nu_\varepsilon)}{ \varepsilon}\underset{\varepsilon\to 0}{\longrightarrow}0.
\end{equation}
Up to extraction we can assume that $t_\varepsilon\underset{\varepsilon\to 0}{\longrightarrow}t^*\in[0,T]$ (and so $s_\varepsilon\underset{\varepsilon\to 0}{\longrightarrow}t^*$) and $\mu_\varepsilon\underset{\varepsilon\to 0}{\longrightarrow}\mu^*\in\mesT$ (and so $\nu_\varepsilon\underset{\varepsilon\to 0}{\longrightarrow}\mu^*$).
\newline
Moreover, we as usual have 
\begin{align*}
M:=\max_{[0,T]\times\mesT} (\mathcal U-\mathcal V)&\ge \mathcal U(t^*,\mu^*)-\mathcal V(t^*,\mu^*)\\
&\ge \underset{\varepsilon\to 0}{\limsup}\left(\mathcal U(t_\varepsilon,\mu_\varepsilon)-\mathcal V(s_\varepsilon,\nu_\varepsilon)\right)\\
&\underset{\eqref{eq:penalizationgoes0}}{\ge}\underset{\varepsilon\to 0}{\limsup}\,M_\varepsilon\ge\underset{\varepsilon\to 0}{\liminf} \,M_\varepsilon \ge M,
\end{align*}
where the last inequality comes from $M_\varepsilon\ge \sup_{t\in[0,T],\mu\in\mesT}\phi_\varepsilon(t,t,\mu,\mu)=M$. 
\newline
Hence, all the previous inequalities are equalities and in particular we obtain that $$ \underset{\varepsilon\to 0}{\lim}M_\varepsilon=M=\mathcal U(t^*,\mu^*)-\mathcal V(t^*,\mu^*).$$
\newline
Let us distinguish two cases.
\newline
$\bullet$ $t^*=0$. In this case, we have $M=\max_{[0,T]\times\mesT} (\mathcal U-\mathcal V)=\mathcal U(0,\mu^*)-\mathcal V(0,\mu^*)\le 0$ by hypothesis giving the result. 
\newline
$\bullet$ $t^*>0$. For $\varepsilon>0$ small enough we have $t_\varepsilon>0$ and $s_\varepsilon>0$. 
\newline
Let $\gamma_\varepsilon$ be an optimal coupling between $\mu_\varepsilon$ and $\nu_\varepsilon$ and let $\zeta$ as in Proposition \ref{prop:subdiffW}. Using that $(t_\varepsilon,s_\varepsilon, \mu_\varepsilon,\nu_\varepsilon)$ is a point of maximum of $\phi_\varepsilon$ and Proposition \ref{prop:subdiffW} we get $$\left(\cfrac{t_\varepsilon-s_\varepsilon}{\varepsilon},(\pi_1,\varepsilon^{-1}\zeta)_\sharp\gamma_\varepsilon\right)\in\mathcal U^+(s_\varepsilon,\nu_\varepsilon),$$
and  $$\left(\cfrac{t_\varepsilon-s_\varepsilon}{\varepsilon},(\pi_2,\varepsilon^{-1}\zeta)_\sharp\gamma_\varepsilon\right)\in\mathcal V^-(t_\varepsilon,\mu_\varepsilon).$$
The sub and super solution properties of $\mathcal U$ and $\mathcal V$ yield
\begin{equation}
\label{eq:principecomp}
\left\{
\begin{aligned}
\cfrac{t_\varepsilon-s_\varepsilon}{\varepsilon}-\int_{(\T^d)^2} \left(b(x,\mu_\varepsilon)\cdot\frac{\zeta(x,y)}{\varepsilon}+\frac{1}{2}\left|\frac{\zeta(x,y)}{\varepsilon}\right|^2\right)\gamma_\varepsilon(dx,dy)&\le-\delta\\
\cfrac{t_\varepsilon-s_\varepsilon}{\varepsilon}-\int_{(\T^d)^2} \left(b(y,\nu_\varepsilon)\cdot\frac{\zeta(x,y)}{\varepsilon}+\frac{1}{2}\left|\frac{\zeta(x,y)}{\varepsilon}\right|^2\right)\gamma_\varepsilon(dx,dy)&\ge 0
\end{aligned}
\right.
\end{equation}
We subtract the two inequalities of \eqref{eq:principecomp} to obtain 
\begin{equation}
\label{eq:principecomp2}
\int_{(\T^d)^2}\left(\left(b(x,\mu_\varepsilon)-b(y,\nu_\varepsilon)\right)\cdot\frac{\zeta(x,y)}{\varepsilon}\right)\gamma_\varepsilon(dx,dy)\ge\delta.
\end{equation}
Since $b$ is a Lipschitz function with constant $K$ we get 
\begin{equation}
\label{eq:principecomp3}
\begin{split}
\delta &\le \int_{(\T^d)^2}\cfrac{|\zeta(x,y)|}{\varepsilon} \,\left|b(x,\mu_\varepsilon)-b(y,\nu_\varepsilon)\right|\gamma_\varepsilon(dx,dy)\\
&\le K\int_{(\T^d)^2}\cfrac{|\zeta(x,y)|}{\varepsilon} \,\left(|x-y|+W_2(\mu_\varepsilon,\nu_\varepsilon)\right)\gamma_\varepsilon(dx,dy)\\
&\le K \left[\cfrac{1}{\varepsilon}\int_{(\T^d)^2}|x-y|^2\gamma_\varepsilon(dx,dy)+\cfrac{W_2(\mu_\varepsilon,\nu_\varepsilon)}{\varepsilon}\int_{(\T^d)^2}|x-y|\gamma_\varepsilon(dx,dy)\right]\\
&\le 2K \cfrac{W_2^2(\mu_\varepsilon,\nu_\varepsilon)}{\varepsilon},
\end{split}
\end{equation}
where the last inequality comes from the Cauchy-Schwarz inequality and the fact that $\gamma_\varepsilon$ is an optimal plan for $W_2(\mu_\varepsilon,\nu_\varepsilon)$.
Passing to the limit $\varepsilon$ goes to $0$, \eqref{eq:penalizationgoes0} gives a contradiction since $\delta>0$.
\end{proof}

\begin{corollary}
\label{coro:uniquenessofviscositysolution}
Given $\mathcal U_0:\mesTdeux\to\R$, there exists at most one continuous viscosity solution of \eqref{eq:PDE} with initial condition $\mathcal U_0$. 
\end{corollary}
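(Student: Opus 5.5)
The corollary follows directly from the comparison principle just established, applied twice. Let $\mathcal U_1$ and $\mathcal U_2$ be two continuous viscosity solutions of \eqref{eq:PDE} on $[0,T]\times\mesTdeux$ with the same initial condition, $\mathcal U_1(0,\cdot)=\mathcal U_2(0,\cdot)=\mathcal U_0$.

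By Definition \ref{def:sub/supersol}, a continuous viscosity solution is both a sub-solution and a super-solution. Continuity gives the required semicontinuity: it is usc, so it may serve as a sub-solution, and lsc, so it may serve as a super-solution.

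First, take $\mathcal U=\mathcal U_1$ as sub-solution and $\mathcal V=\mathcal U_2$ as super-solution. Since $\mathcal U_1(0,\cdot)\le \mathcal U_2(0,\cdot)$ holds with equality, the comparison theorem gives $\mathcal U_1(t,\cdot)\le\mathcal U_2(t,\cdot)$ for all $t\in[0,T]$. Next, exchange the roles, taking $\mathcal U_2$ as sub-solution and $\mathcal U_1$ as super-solution; this gives the reverse inequality. Together the two inequalities yield $\mathcal U_1=\mathcal U_2$ on $[0,T]\times\mesTdeux$.

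The only point to check is that the hypotheses of the comparison theorem hold, and nothing delicate arises there. The comparison proof uses compactness of $[0,T]^2\times\mesT^2$ to obtain the maximizer of $\phi_\varepsilon$, and this is automatic because $\T^d$ is compact, so $\mesTdeux=\mesT$ is compact for $W_2$. It also uses boundedness and semicontinuity of $\mathcal U$ and $\mathcal V$, which follow from continuity on a compact set. Hence there is no real obstacle, and the statement is immediate from the theorem.
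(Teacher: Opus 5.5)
Your proposal is correct and is exactly the argument the paper intends: the corollary is stated without a separate proof, as an immediate consequence of the comparison principle applied twice with the roles of sub- and super-solution exchanged. Your remarks on semicontinuity and on the compactness of $\mesTdeux$ correctly confirm that the comparison theorem applies.
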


\subsection{Convergence toward the limit PDE}

\subsubsection{Compactness}
We first recall the Arzela-Ascoli theorem on the space of probability measures. This theorem can easily be obtained as an extension procedure that preserves the modulus of continuity and then by applying the standard Arzela-Ascoli theorem (see for instance \cite{bertucci2026analysis}).
\begin{theorem}
\label{thm:arzela-ascoli}
Let $(U_N)_{N\ge 1}$ be a sequence of functions such that $U_N: [0,T]\times (\T^d)^N$ is symmetric with respect to its second argument.
We assume that there exists a modulus of continuity $\omega$ and a constant $M>0$ such that 
\begin{enumerate}
\item{For all $N\ge 1$, for all $x,y\in (\T^d)^N$ and for all $s,t\in[0,T]$ we have $$|U_N(s,x)-U_N(t,y)|\le \omega\left(|s-t|+W_2(\mu_x^N,\mu_y^N)\right),$$}
\item{For all $N\ge 1$ $$\sup_{t\in[0,T],\,x\in (\T^d)^N} |U_N(t,x)|\le M.$$}
\end{enumerate}
Then, up to extraction, there exists $\mathcal U:[0,T]\times \mesTdeux\to \R$ with modulus of continuity $\omega$ such that $$\sup_{t\in[0,T],\,x\in (\T^d)^N} |U_N(t,x)-\mathcal U(t,\mu_x^N)|\underset{N\to+\infty}{\longrightarrow} 0.$$
\end{theorem}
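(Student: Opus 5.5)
The plan is to reduce to the classical Arzelà--Ascoli theorem on the compact metric space $[0,T]\times\mesTdeux$. The reduction uses an extension procedure that preserves the modulus of continuity.

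\textbf{Step 1: define each $U_N$ on the empirical measures.} Let $E_N:=\{\mu_x^N : x\in(\T^d)^N\}\subset\mesTdeux$. For $(t,\mu)\in[0,T]\times E_N$ set $\tilde U_N(t,\mu):=U_N(t,x)$ for any $x$ with $\mu_x^N=\mu$. This is well defined: two such $x$ differ by a permutation, and in any case hypothesis (1) with $W_2(\mu_x^N,\mu_y^N)=0$ gives $|U_N(t,x)-U_N(t,y)|\le\omega(0)=0$. Hypothesis (1) then says that $\tilde U_N$ has modulus $\omega$ on $[0,T]\times E_N$ for the metric $\rho((s,\mu),(t,\nu))=|s-t|+W_2(\mu,\nu)$.

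\textbf{Step 2: extend with the same modulus.} Define
$$\mathcal U_N(t,\mu):=\inf_{(s,\nu)\in[0,T]\times E_N}\bigl\{\tilde U_N(s,\nu)+\omega(\rho((t,\mu),(s,\nu)))\bigr\},$$
truncated by $\max(-M,\min(M,\cdot))$. Since $\omega$ is concave, nondecreasing and vanishes at $0$, it is subadditive. Hence $\mathcal U_N$ coincides with $\tilde U_N$ on $[0,T]\times E_N$, and it has modulus $\omega$ everywhere, because an infimum of functions with modulus $\omega$ has modulus $\omega$ by the triangle inequality and subadditivity. Truncation preserves the modulus, and by hypothesis (2) it does not change the values on $E_N$. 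These subadditivity checks are the only real technical point.

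\textbf{Step 3: extract a limit.} The space $[0,T]\times\mesTdeux$ is compact, since $\T^d$ is compact and so $\mesTdeux$ with $W_2$ is compact. The family $(\mathcal U_N)$ is therefore uniformly bounded by $M$ and equicontinuous with common modulus $\omega$. The classical Arzelà--Ascoli theorem gives a subsequence converging uniformly to some $\mathcal U$. The limit inherits the modulus $\omega$ by passing to the limit in $|\mathcal U_N(s,\mu)-\mathcal U_N(t,\nu)|\le\omega(\rho)$.

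\textbf{Step 4: conclude.} Along the subsequence,
$$\sup_{t,x}|U_N(t,x)-\mathcal U(t,\mu_x^N)|=\sup_{t,x}|\mathcal U_N(t,\mu_x^N)-\mathcal U(t,\mu_x^N)|\le\|\mathcal U_N-\mathcal U\|_\infty\to0.$$
This is the required convergence.
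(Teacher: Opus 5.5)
Your proposal is correct and follows the route the paper indicates. The paper only sketches this in one line: extend each $U_N$ from the empirical measures to all of $[0,T]\times\mesTdeux$ by an extension that preserves the modulus $\omega$, then apply the classical Arzel\`a--Ascoli theorem on the compact space $[0,T]\times\mesTdeux$; your Steps 1--4, with the inf-convolution extension and the subadditivity of $\omega$, supply exactly the missing details.
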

We can now prove the compactness of $(\Lambda_N^f)_{N\ge 1}$.
\begin{proposition}
\label{prop:compactness}
There exists a constant $C>0$ such that for all $x,y\in (\T^d)^N$ and $t,s\in [0,T]$\begin{align*}
\left\{
\begin{aligned}
\sup_{t'\in [0,T]}\left|\Lambda_N^f(t',x)-\Lambda_N^f(t',y)\right|&\le C W_2(\mu_x^N,\mu_y^N)\\
\sup_{x'\in(\T^d)^N}|\Lambda_N^f(t,x')-\Lambda_N^f(s,x')|&\le C(|t-s|+\sqrt{|t-s|})
\end{aligned}
\right.
\end{align*}
Hence, up to extraction, there exists $\mathcal U^f:[0,T]\times \mesT\to \R$ such that $$\sup_{t\in[0,T],\,x\in (\T^d)^N} |\Lambda_N^f(t,x)-\mathcal U^f(t,\mu_x^N)|\underset{N\to+\infty}{\longrightarrow} 0.$$
\end{proposition}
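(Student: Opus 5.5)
The plan is to use the stochastic-control (variational) representation of $\Lambda_N^f$ for the space estimate and the Markov property for the time estimate. Once both bounds hold, the $W_2$-Lipschitz bound together with $\|\Lambda_N^f\|_\infty\le\|f\|_\infty$ (immediate from the definition) gives the hypotheses of Theorem \ref{thm:arzela-ascoli} with modulus $\omega(r)=C(r+\sqrt r)$, and this yields the extraction.

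\emph{Space regularity.} By the Boué--Dupuis formula (or directly, from the Cole--Hopf structure of \eqref{eq:edphofcole}),
\[
\Lambda_N^f(t,x)=\sup_{u}\E\Big[f(\mu^N_{X^u_t})-\frac{1}{2N}\sum_{i=1}^N\int_0^t|u^i_s|^2ds\Big],
\]
where $dX^{u,i}_s=(b(X^{u,i}_s,\mu^N_{X^u_s})+u^i_s)ds+\sigma_N dB^i_s$ starting from $x$. The factor $1/(2N)$ comes from $\varepsilon_N\sigma_N^2=N$. Given $x,y$, relabel $y$ so that $\frac1N\sum_i|x_i-y_i|^2=W_2^2(\mu^N_x,\mu^N_y)$; this uses the symmetry of $\Lambda_N^f$ and the fact that optimal couplings between empirical measures with equal weights are permutations. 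Then take an $\epsilon$-optimal control $u$ for $x$, apply the same control and the same Brownian motions from $y$, and compare. Synchronous coupling gives, for the lifts,
\[
\frac{d}{ds}\frac1N\sum_i|\tilde X^i_s-\tilde Y^i_s|^2\le C K_b\,\frac1N\sum_i|\tilde X^i_s-\tilde Y^i_s|^2,
\]
because the noise and the control cancel and $b$ is Lipschitz, with $W_2(\mu^N_{X},\mu^N_Y)\le(\frac1N\sum|X^i-Y^i|^2)^{1/2}$. Grönwall then yields $W_2(\mu^N_{X_t},\mu^N_{Y_t})\le e^{CT}W_2(\mu^N_x,\mu^N_y)$ pathwise, so $\Lambda_N^f(t,x)-\Lambda_N^f(t,y)\le L_fe^{CT}W_2(\mu_x^N,\mu_y^N)+\epsilon$. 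Symmetrizing the roles of $x$ and $y$ gives the first estimate. Working with the lifts in $\R^d$ avoids any issue with the torus distance, since $|\cdot|_{\T^d}\le|\cdot|$.

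\emph{Time regularity.} For $s<t$, the Markov property (as in Proposition \ref{prop:markovproperty}) gives $\Lambda_N^f(t,x)=\Lambda_N^{g}(s,x)$ with $g=\Lambda_N^f(t-s,\cdot)$. Both $\Lambda^f_N(\cdot,x)$ and $\Lambda^g_N(\cdot,x)$ are monotone in their initial datum, and adding a constant to the datum shifts them by that constant. Hence it suffices to prove $\sup_{x}|\Lambda_N^f(h,x)-f(\mu_x^N)|\le C(h+\sqrt h)$ with $C$ depending only on $L_f,K_b,d,T$. The semigroup is nonexpansive in sup norm, so $|\Lambda^g_N(s,x)-\Lambda^f_N(s,x)|\le\|g-f\|_\infty$ and we reduce to time $h=t-s$.

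\emph{Lower bound.} Take $u=0$. Then $\Lambda_N^f(h,x)\ge\E f(\mu^N_{X_h})\ge f(\mu^N_x)-L_f\,\E W_2$, and $\E\frac1N\sum|X^i_h-x_i|^2\le C(h^2+\sigma_N^2h)$.

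\emph{Upper bound.} For any control,
\[
f(\mu^N_{X_h})-f(\mu^N_x)\le L_f\Big(\|b\|_\infty h+\Big(\frac hN\sum_i\int_0^h|u^i|^2\Big)^{1/2}+\text{noise}\Big).
\]
Using $L_f a\sqrt h\le \frac{a^2}{2}+\frac{L_f^2h}{2}$ with $a^2=\frac1N\sum\int|u^i|^2$ absorbs the control cost and leaves $C(h+\sqrt h)$. Here $\|b\|_\infty<\infty$ by compactness of $\T^d\times\mesTdeux$.

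The main obstacle is justifying the control representation with the right normalization and the measurability of $\epsilon$-optimal controls. If this becomes delicate, the fallback is to obtain the Lipschitz bound directly from the PDE \eqref{eq:edphofcole}. The idea is to double variables: apply a comparison argument to $\Lambda_N^f(t,x)-\Lambda_N^f(t,y)-Le^{Ct}\big(\frac1N\sum|x_i-y_i|^2_{\T^d}\big)^{1/2}$, suitably smoothed via the $\tilde d_{\T^d}$ of Section \ref{subsection:exptightness}. In that argument the quadratic gradient terms cancel at the maximum point, and the drift terms are controlled by $K_b$.
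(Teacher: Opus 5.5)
Your proof is correct. For the spatial estimate and the Arzelà--Ascoli step it is the paper's argument: the same control (Boué--Dupuis) representation with cost $\frac{1}{2N}\sum_i\int|\alpha^i_s|^2ds$, the same reuse of the ($\epsilon$-)optimal control for $x$ from the relabelled point $y$ with the same Brownian motions, and Grönwall.

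The only genuine difference is in the time estimate.

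\textbf{The paper's route.} It writes the dynamic programming principle
\[
\Lambda_N^f(t+h,x)=\sup_\alpha\E_x\Bigl[\Lambda_N^f(t,Y_h)-\tfrac{1}{2N}\int_0^h\sum_i|\alpha^i_s|^2ds\Bigr].
\]
It then controls $\Lambda_N^f(t,Y_h)-\Lambda_N^f(t,x)$ through the spatial Lipschitz bound already proved for $\Lambda_N^f(t,\cdot)$. The lower bound uses $\alpha=0$, and the upper bound uses Young's inequality to absorb the control cost.

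\textbf{Your route.} You use the semigroup property together with monotonicity and translation invariance of $g\mapsto\Lambda_N^g$ to get
\[
|\Lambda_N^f(t,x)-\Lambda_N^f(s,x)|\le\sup_{x'}|\Lambda_N^f(t-s,x')-f(\mu^N_{x'})|.
\]
So you only compare with the datum after a short time, where the Lipschitz constant of $f$ itself suffices.

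\textbf{What each buys.} Your version decouples the time estimate from the space estimate. Its constant does not carry the $e^{CT}$ factor from Grönwall. Its lower bound needs no control at all, since Jensen gives $\Lambda_N^f(h,x)\ge\E f(\mu^N_{X_h})$. The paper's route is the more standard control-theoretic one, but it needs the spatial bound at every time first. Both end with the same Young-inequality absorption and the same $C(h+\sqrt h)$ bound.
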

These type of estimates are classical in optimal control theory in $\R^d$ \cite{fleming2006controlled} and on the space of probability measures \cite{bertucci2025tangent,bertucci2026analysis}.
We give a proof of this result in Appendix \ref{Appendix:D} to lighten the presentation.

\subsubsection{Convergence}
In this section we establish the most technical result of this paper. To do this we shall use a standard technique when studying PDE on the space of the probability measures which is to lift the problem. The idea is to lift any functions $h:\mesTdeux\to \R$ defined on $\mesTdeux$ on an Hilbert space like $L^2(\Omega',\mathcal F',\mP')$ by defining a lifted version $\tilde h(X)=h(\text{Law}(X))$. This technique has been used for studying Hamilton-Jacobi equations on the space of probability measures since the PDE now become a PDE set on an Hilbert space. On the topic of Hamilton-Jacobi equations in infinite dimension see for instance \cite{crandall1985hamilton,gangbo2019differentiability,gangbo2008hamilton,swiech2009pde}.
Let us mention that here we shall use this lift because it will be more explicit and convenient to construct random variables on a probability space than constructing transport plans.
\newline
In this section only to lighten notation, we write $u_N:=\Lambda_N^f$.
 
\paragraph{The lift strategy} We shall work in the following space $$\mathcal H=L^2(((0,1),\mathcal B(0,1),\leb),\T^d)$$ with $\mathcal B(0,1)$ the Borel sigma-algebra of $(0,1)$, on which our random variables will be defined. 
\newline
To lighten notation instead of $((0,1),\mathcal B(0,1),\leb)$ we shall just write $(0,1)$ when there is no ambiguity.
Given an element $X\in \mathcal H$, we write $\text{ Law}(X)$ for the law of $X$ which is a probability measure in $\mesTdeux$. To avoid confusions, in this section only, we forget the initial probability space $(\Omega,\mathcal F,\mP)$ on which we worked and we use the following notation: for $\mathbb X=\R^d$ or $\mathbb X=\T^d$ and $M,N\in  
L^2(((0,1),\mathcal B(0,1),\leb), \mathbb X)$ we write  $\E(M)=\int_0^1 M(x)dx$ and $\|M-N\|_2^2=\E \left[d_{\mathbb  X}(M,N)^2\right]$. 
\newline
We introduce the sub-differential and the super-differential of a function defined on $\mathcal H$.
\begin{definition}
Let $\tilde U:\mathcal H\to \R$ be a usc function and $X\in \mathcal H$. We define the super-differential of $\tilde U$ in $X$ as the set of $Z \in L^2((0,1),\mathcal B(0,1),\leb),\R^d)$ such that there exists a modulus of continuity $\omega$ such that for every $V\in L^2((0,1),\mathcal B(0,1),\leb),\R^d)$ we have 
\begin{equation}
\label{eq:subdiffhilbert}
\tilde U(X+V \text{ modulo } \Z^d)\le U(X)+\E[Z\cdot V]+\|V\|_2\, \omega(\|V\|_2).
\end{equation}
We denote by $\partial^+ \tilde U(X)$ this set.
\newline 
We similarly define the sub-differential of $\tilde U$ a lsc function at $X$ where the inequality \eqref{eq:subdiffhilbert} is reversed and it will be denoted by $\partial^-\tilde U(X)$.
\end{definition}
\begin{remark}
As said in Remark \ref{rem:extensiontemps} for a function $\tilde U:[0,T]\times \mathcal H\to \R$ we can define the immediate analogues $\partial^+\tilde U(t,X)$ and $\partial^-\tilde U(t,X)$ for $(t,X)\in [0,T]\times \mathcal H$.
\end{remark}
We can now a define a notion of solutions of \eqref{eq:PDE} using the lift.
\begin{definition}
\label{def:sub/supersolhilbert}
A usc (resp.lsc) function $\tilde U:[0,T]\times \mathcal H \to\R$ is said to be a viscosity sub-solution (resp. super-solution) of \eqref{eq:PDE} if for any $t\in(0,T],\, X\in \mathcal H$, $(\theta,Z)\in \partial^+ \tilde U(t,X)$, (resp. $(\theta,Z)\in \partial^-\tilde  U(t,X)$), it holds that 
\begin{equation}
\label{eq:sub/supersolhilbert}
\theta-\E[b(X,\mathrm{ Law}(X))\cdot Z]-\frac{1}{2}\E[|Z|^2]\le 0 \,\, (\text{resp. }\ge 0).
\end{equation}
A continuous function $\tilde U:[0,T]\times\mathcal H\to\R$ is said to be a viscosity solution of \eqref{eq:PDE} if it is both a sub-solution and a super-solution.
\newline
We say that $\tilde U_0:\mesT\to\R $ is the initial condition of $\tilde U$ if $\tilde U(0,.)=\tilde U_0$.
\end{definition}
By analogy of these definitions with the one presented for functions defined on $\mesTdeux$ we obtained the following result.
\begin{proposition}
\label{coro:liftsolimpliqueviscosol}
Given $\mathcal U: [0,T]\times \mesTdeux\to \R$ a continuous function we define its lift $\tilde U [0,T]\times \mathcal H \to \R$ by $\tilde U(t,X)=\mathcal U(t,\mathrm{ Law}(X))$. If $\tilde U$ is a viscosity solution of \eqref{eq:PDE} then $\mathcal U$ is a viscosity solution of \eqref{eq:PDE}.
\end{proposition}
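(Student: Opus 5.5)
The plan is to show that every element of the super-differential (resp. sub-differential) of $\mathcal U$ at $(t,\mu)$, in the sense of Remark \ref{rem:extensiontemps}, produces an element of the super-differential (resp. sub-differential) of the lift $\tilde U$ at $(t,X)$ for a suitable $X\in\mathcal H$ with $\mathrm{Law}(X)=\mu$, and that the Hamiltonians agree. By symmetry (replacing $\mathcal U$ by $-\mathcal U$ and $\psi_x$ by $(-Id)_\sharp\psi_x$) it suffices to treat the sub-solution property. Continuity of $\mathcal U$ is given, so only the inequality \eqref{eq:sub/supersol} has to be checked.

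Fix $t\in(0,T]$, $\mu\in\mesTdeux$ and $(\theta,\psi)\in\partial^+\mathcal U(t,\mu)$. Since $(0,1)$ with Lebesgue measure is a standard atomless probability space, I would build a measurable map $(X,Z):(0,1)\to\T^d\times\R^d$ with joint law $\mu(dx)\psi_x(dz)$; the second moment condition in Definition \ref{def:differentialtore} gives $Z\in L^2((0,1),\R^d)$ and $X\in\mathcal H$. The claim is then that $(\theta,Z)\in\partial^+\tilde U(t,X)$ with the same modulus $\omega$. Take $t'\in[0,T]$ and $V\in L^2((0,1),\R^d)$, and let $\Gamma=\mathrm{Law}(X,Z,V)$ and $\eta=\mathrm{Law}(X,V)$. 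Then $(\pi_1)_\sharp\eta=\mu$, $(\pi_1,\pi_2)_\sharp\Gamma=\mu(dx)\psi_x(dz)$, $(\pi_1,\pi_3)_\sharp\Gamma=\eta$, $\exp_\sharp\eta=\mathrm{Law}(X+V \text{ modulo } \Z^d)$, $C_{\T^d}(\eta)=\|V\|_2$ and $\int z\cdot v\,d\Gamma=\E[Z\cdot V]$. Plugging these into the inequality of Remark \ref{rem:extensiontemps} yields exactly
$$\tilde U(t',X+V)\le\tilde U(t,X)+\theta(t'-t)+\E[Z\cdot V]+\rho\,\omega(\rho),\qquad \rho=\sqrt{|t'-t|^2+\|V\|_2^2},$$
which is the Hilbert super-differential inequality.

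Applying the sub-solution property of $\tilde U$ (Definition \ref{def:sub/supersolhilbert}) at $(t,X)$ gives $\theta-\E[b(X,\mu)\cdot Z]-\frac12\E|Z|^2\le0$. Since $\mathrm{Law}(X,Z)=\mu(dx)\psi_x(dz)$, this expectation equals $\int(b(x,\mu)\cdot z+\frac12|z|^2)\psi_x(dz)\mu(dx)$, which is \eqref{eq:sub/supersol}. The super-solution case is identical once we use that $\partial^-$ is defined through $-\mathcal V$ together with the sign flip $z\mapsto -z$. Here $-\tilde V$ is the lift of $-\mathcal V$, and the quadratic term is invariant under this flip while the linear term changes sign consistently.

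The only genuinely delicate step is the realization of $\mu(dx)\psi_x(dz)$ as the law of a pair $(X,Z)$ on $(0,1)$. This follows from the standard fact that any Borel probability measure on a Polish space is the image of Lebesgue measure on $(0,1)$, applied to $\T^d\times\R^d$. Beyond that step, the argument is a direct translation between the two definitions.
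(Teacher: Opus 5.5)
Your proposal is correct and follows the paper's own argument. The paper's lemma in Appendix~\ref{Appendix:C} takes $(X,Z)$ with $\mathrm{Law}(X,Z)=\mu(dx)\psi_x(dz)$, sets $\eta=\mathrm{Law}(X,V)$ and $\Gamma=\mathrm{Law}(X,Z,V)$ to get $Z\in\partial^+\tilde U(X)$ with the same modulus, and then rewrites the sub-solution inequality for $\tilde U$ as an integral against $\mu(dx)\psi_x(dz)$; your explicit construction of $(X,Z)$ on $(0,1)$, your inclusion of the time variable, and your treatment of the sign flip for super-solutions supply details the paper leaves implicit.
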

The proof is given in \ref{Appendix:C} for the sake of completeness of the reader.

\paragraph{Convergence using the lift}
In this section we prove the convergence of $(u_N=\Lambda_N^f)_{N\ge 1}$. The strategy that we use is classical and consists of approaching points of the differential of first order of a function by elements of the differential of second order. 
\begin{theorem}
\label{thm:convergence}
The sequence of functions $(u_N)_{N\ge 1}$ converges uniformly toward $\mathcal U^f$ the unique viscosity solution of \eqref{eq:PDE} with initial condition $f$.
More explicitly we have $$\sup_{t\in[0,T],x\in (\T^d)^N}|u_N(t,x)-\mathcal U^f(t,\mu_x^N)|\underset{N\to+\infty}{\longrightarrow} 0.$$
\end{theorem}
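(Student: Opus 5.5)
By Proposition \ref{prop:compactness}, every subsequence of $(u_N)_{N\ge1}$ has a further subsequence converging uniformly, in the sense of Theorem \ref{thm:arzela-ascoli}, to some continuous $\mathcal U:[0,T]\times\mesTdeux\to\R$ with $\mathcal U(0,\cdot)=f$. The initial condition holds because $u_N(0,x)=f(\mu_x^N)$. By Corollary \ref{coro:uniquenessofviscositysolution}, it therefore suffices to show that any such limit $\mathcal U$ is a viscosity solution of \eqref{eq:PDE}. The whole sequence then converges to the unique solution $\mathcal U^f$. By Proposition \ref{coro:liftsolimpliqueviscosol}, it is enough to check the sub- and super-solution properties for the lift $\tilde U(t,X)=\mathcal U(t,\mathrm{Law}(X))$ on $\mathcal H$. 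I only describe the sub-solution case; the super-solution case is symmetric.

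Fix $(\theta,Z)\in\partial^+\tilde U(t_0,X_0)$ with $t_0>0$. The first step is the classical reduction from first-order to second-order test functions. Adding a penalization $\frac{1}{\kappa}\bigl((t-t_0)^2+\|X-X_0\|_2^2\bigr)$ makes $(t_0,X_0)$ a strict local maximum of $\tilde U(t,X)-\theta t-\E[Z\cdot(X-X_0)]$. I will then regularize $Z$: approximate it in $L^2$ by a $Z^\eta=g(X_0,\cdot)$ that is piecewise constant on dyadic intervals of $(0,1)$ and smooth in the space variable, with $X_0$ also replaced by a piecewise constant approximation. This gives a smooth test function $\Phi(t,X)=\theta t+\E[Z^\eta\cdot X]+\frac1\kappa(\ldots)$ that touches $\tilde U$ from above up to errors $o(1)$ as $\eta\to0$.

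The second step discretizes this test function on the particle level. For $N$ a multiple of $2^k$ (the dyadic level), I identify $x\in(\T^d)^N$ with the step function $X^N_x=\sum_i x_i\mathds 1_{((i-1)/N,i/N)}\in\mathcal H$ and set $\phi_N(t,x)=\Phi(t,X^N_x)$. This is smooth on $[0,T]\times(\T^d)^N$, with $\nabla_{x_i}\phi_N=\frac1N Z^\eta_i+O(\frac1{N\kappa})$ and $\Delta_{x_i}\phi_N=O(\frac{1}{N\kappa})$, where the penalization is written with the smooth distance $\tilde d_{\T^d}$ of Section \ref{subsection:exptightness}. Uniform convergence of $u_N(t,x)$ to $\mathcal U(t,\mu_x^N)$ along the subsequence, combined with $\mathcal U(t,\mu_x^N)\ge\tilde U(t,X_x^N)$ being equal to it by definition of the lift, shows that $u_N-\phi_N$ attains its maximum at some point $(t_N,x^N)$ with $X^N_{x^N}\to X_0$ in $L^2$ and $t_N\to t_0$. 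Here I use strictness of the maximum together with the fact that step functions are dense in $\mathcal H$. Since $u_N$ is a classical solution of \eqref{eq:edphofcole}, at $(t_N,x^N)$ we get
$$\partial_t\phi_N\le \sum_i b(x_i^N,\mu^N_{x^N})\cdot\nabla_{x_i}\phi_N+\frac{\sigma_N^2}{2}\sum_i\Delta_{x_i}\phi_N+\frac N2\sum_i|\nabla_{x_i}\phi_N|^2.$$
In this inequality the Laplacian term is $O(\sigma_N^2/\kappa)\to0$, and the other terms are Riemann sums converging to $\E[b(X_0,\mathrm{Law}(X_0))\cdot Z^\eta]+\frac12\E|Z^\eta|^2+O(\cdot)$. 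Letting $N\to\infty$, then $\eta\to0$, and finally removing the penalization yields \eqref{eq:sub/supersolhilbert}.

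The main obstacle I expect is the first step. The subtle points are making the maximum strict in the infinite-dimensional space $\mathcal H$, which is not locally compact, and ensuring that the discrete maximizers $X^N_{x^N}$ converge to $X_0$ rather than to a rearrangement of it. The particle function $u_N$ only sees $\mu_x^N$, so $x^N$ is defined only up to permutation. My plan is to exploit this symmetry: permuting indices, I choose $x^N$ so that $X^N_{x^N}$ is $L^2$-closest to $X_0$, and then use the strict penalization to force convergence. If strictness fails to give compactness, I would fall back on a Borwein--Preiss/Stegall variational principle on $\mathcal H$ to produce genuine maxima of the perturbed problem.
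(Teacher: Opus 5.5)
Your overall architecture matches the paper: relative compactness from Proposition \ref{prop:compactness}, passage to the lift, Proposition \ref{coro:liftsolimpliqueviscosol}, then uniqueness. The step you rely on to localize the particle maximizers, however, is false.

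An element $(\theta,Z)\in\partial^+\tilde U(t_0,X_0)$ only yields $\tilde U(t,X)-\tilde U(t_0,X_0)-\theta(t-t_0)-\E[Z\cdot V]\le\rho\,\omega(\rho)$ with $\rho^2=|t-t_0|^2+\|V\|_2^2$. The modulus $\omega$ may be far from linear. Subtracting a fixed quadratic $\frac1\kappa\rho^2$ therefore does not create a local maximum at $(t_0,X_0)$, let alone a strict one. Already in dimension one, $u(x)=|x|^{3/2}$ has $0\in\partial^+u(0)$ with $\omega(r)=\sqrt r$, while $u(x)-x^2/\kappa>0$ for $0<|x|<\kappa^2$.

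Fréchet super-differentials are realized by $C^1$ test functions, not quadratic ones. A $C^1$ test function built from $\omega$ has second derivatives of order $\omega(\rho)/\rho$, which would leave $\frac{\sigma_N^2}{2}\sum_i\Delta_{x_i}\phi_N$ uncontrolled. Hence your argument establishes neither $X^N_{x^N}\to X_0$ nor, more importantly, that the penalization's gradient vanishes at $(t_N,x^N)$. ``Removing the penalization'' at the end (sending $1/\kappa\to0$) goes in the wrong direction. A Borwein--Preiss or Stegall argument on $\mathcal H$ does not address the issue either: maxima of $u_N-\phi_N$ exist trivially on the compact set $[0,T]\times(\T^d)^N$, and what is needed is their quantitative localization at the given $(t_0,X_0,\theta,Z)$.

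The missing idea is to let the penalization strength blow up and to plug the maximizer directly into the super-differential inequality. In the paper the penalty is $\frac{1}{2\gamma_N}\bigl(|t-t_0|^2+D(X^N,Y^N)^2\bigr)$ with $\gamma_N\to0$. The argument has three steps:
\begin{enumerate}
\item Compare $u_N-\phi_N$ at $(t_N,y^N)$ and at $(t_0,x^N)$, using the uniform convergence of $u_N$.
\item Apply $(\theta,Z)\in\partial^+\tilde V(t_0,X_0)$ with the velocity $V^N=\zeta(Y^N,X_0)$. This gives $h_N^2/\gamma_N\lesssim\delta_N+(h_N+\delta_N)\,\omega(h_N+\delta_N)+\dots$, where $h_N^2=|t_N-t_0|^2+D(X^N,Y^N)^2$.
\item Conclude $h_N/\gamma_N\to0$ when $\gamma_N$ tends to $0$ slowly relative to the discretization error $\delta_N$.
\end{enumerate}
One could also send the penalty parameter to $0$ after $N\to\infty$, but only with this quantitative bound, not through strictness. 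This bound is precisely what makes the penalty gradient $S(Y^N,X^N)/\gamma_N$ vanish in $L^2$. Choosing $\gamma_N\ge\sigma_N$ then kills the Laplacian term, which is of order $\sigma_N^2/\gamma_N$.

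Two further points your sketch glosses over:
\begin{itemize}
\item On the torus, $\E[Z^\eta\cdot X]$ is not a well-defined smooth function of $X\in\mathcal H$. You must use a periodic substitute such as the coordinatewise $\sin(Y^N-X^N)$. Its cubic discrepancy with $\zeta$, tested against a merely $L^2$ vector $Z$, is handled by truncating $Z$ at a level $K_N$ with $K_N\gamma_N\to0$.
\item No permutation of $x^N$ needs to be chosen, nor could one be, since $\phi_N$ is not symmetric. Law invariance of the lift lets the super-differential inequality at $X_0$ be applied to $Y^N$ whatever its ordering.
\end{itemize}
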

\begin{proof}
By Proposition \ref{prop:compactness}, the sequence of functions $(u_N)_{N\ge 1}$ is relatively compact. Hence it remains to prove that there is a unique limit point.
We suppose to lighten notation that $(u_N)_{N\ge1}$ converges uniformly toward a continuous function $\mathcal V:[0,T]\times \mesTdeux\to \R$. Our goal to obtain the convergence of the full sequence is to characterize $\mathcal V$ by showing that $\mathcal V$ is the unique viscosity solution of \eqref{eq:PDE} with initial condition $f$. To do this, we will use Proposition \ref{coro:liftsolimpliqueviscosol} so that it remains to prove that its lift $\tilde V:[0,T]\times \mathcal H\to \R$ defined by $\tilde V(t,X)=\mathcal V(t,\mathrm{ Law}(X))$ is a viscosity solution. Once this result obtained, the uniqueness of viscosity solutions of \eqref{eq:PDE} from Corollary \ref{coro:uniquenessofviscositysolution} concludes the proof. 
\newline
We only show the sub-solution property since the super-solution property can be done similarly.
\newline
\underline{Step 1: Discretization and notation.}
\newline We fix $(t_0,X_0)\in (0,T]\times L^2((0,1),\T^d)$ and $(\theta,Z)\in\partial^+\tilde V(t_0,X_0)$. In this proof, we will denote by $C$ any constant that only depends on $d,T,\|X_0\|_2,\|Z\|_2$ and we consider $\omega$ the modulus of continuity associated to $(\theta,Z)\in\partial^+\tilde V(t_0,X_0)$.
\newline
We introduce $A_{i,N}=\mathds 1_{((i-1)/N,i/N)}$ to discretize $(0,1)$. By a density argument, we can find $(z^N:=(z_1^N,...z_N^N))_{ N\ge 1}$ with $z^N\in (\R^d)^N$ such that $$Z^N:=\sum_{i=1}^N z_i^N A_{i,N}\overunderset{L^2}{N\to+\infty}{\longrightarrow} Z.$$ We similarly construct  $(x^N:=(x_1^N,...x_N^N))_{ N\ge 1}$ with $x^N\in (\T^d)^N$ such that $$X^N:=\sum_{i=1}^N x_i^N A_{i,N}\overunderset{L^2}{N\to+\infty}{\longrightarrow} X_0.$$ Let us notice that the law of $X^N$ is $\mu^N_{X^N}$ and the law of $Z^N$ is $\mu^N_{Z^N}$.
\newline
Moreover, we introduce an error term
\begin{align*}
\delta_N:=\|X^N-X_0\|&_2+\|Z^N-Z\|_2+\\&\sup_{t\in[0,T],x\in (\T^d)^N}|u_N(t,x)-\mathcal \mathcal V(t,\mu_x^N)|+|\tilde V(t_0,X_0)-\tilde V(t_0,X^N)|
\end{align*} which is a quantity that goes to 0 by construction and continuity of $\mathcal V$.
\newline
Given a a vector $v\in\T^d$ or in $\R^d$, we recall that its sinus is defined by $$\sin(v)=(2\pi)^{-1}(\sin(2\pi v_1),...,\sin(2\pi v_d))\in \R^d.$$
We also recall that $\zeta:\T^d\times\T^d\to \R^d$ is defined by $\zeta(x,y)=x-y-k(x,y)$ where $k(x,y)\in\Z^d$ is chosen such that $x-y-k(x,y)\in[-2^{-1},2^{-1})^d$ (and so $|\zeta(x,y)|=d_{\T^d}(x,y)$).
In particular for all $(x,y)\in(\T^d)^2$ we have $\left|\sin\left(\frac{y-x}{2}\right)\right|=\left|\sin\left(\frac{\zeta(x,y)}{2}\right)\right|$ and using that $\zeta(x,y)\in [-2^{-1},2^{-1})^d$ and the inequality $$\frac{2}{\pi}|x|\le |\sin(x)|\le |x|$$ for $x\in [0,\pi/2]$ we obtain that there exists a constant $C>0$ such that for all $(x,y)\in(\T^d)^2$ 
\begin{equation}
\label{eq:sinusborne1}
C^{-1} d_{\T^d}(x,y)\le \left|\sin\left(\frac{y-x}{2}\right)\right|\le C d_{\T^d}(x,y).
\end{equation}
Let us also mention another inequality that shall be used later based on the existence of $C>0$ such that for all $r<2^{-1}$ $$\left|\frac{1}{2\pi}\sin(2\pi r)-r\right|\le C |r|^3.$$
This inequality implies that there exists a constant $C>0$ such that for all $(x,y)\in(\T^d)^2$ we get 
\begin{equation}
\label{eq:sinusborne2}
|\sin(x-y)-\zeta(x,y)|\le C |\zeta(x,y)|^3.
\end{equation}
\underline{Step 2: Choice of test functions.}
\newline
We define the following smooth test functions $\phi_N :[0,T]\times (\T^d)^N\to \R$: \begin{equation}
\begin{split}
\phi_N(t,y^N:=(y_1^N,...,y_N^N)):=\theta(t-t_0)+\cfrac{1}{N}\sum_{i=1}^N z_i^N\cdot &\,\sin(y_i^N-x_i^N)+\\
&\cfrac{|t-t_0|^2}{2\gamma_N}+\cfrac{1}{2N\gamma_N}\sum_{i=1}^N\left|\sin\left(\frac{y_i^N-x_i^N}{2}\right)\right|^2,
\end{split}
\end{equation}
where $\gamma_N$ is a sequence that goes to 0 slow enough and that shall be chosen later on. 
For $y^N\in(\T^d)^N$ we write $Y^N=\sum_{i=1}^N y_i^N A_{i,N}\in L^2((0,1),\T^d)$ and for $P,Q$ in $L^2((0,1),\T^d)$ or in $L^2((0,1),\R^d)$ $$S(P,Q)=\sin(P-Q)\in L^2((0,1),\R^d)$$ and $$D(P,Q)=\left\|\sin\left(\frac{P-Q}{2}\right)\right\|_2.$$ We can rewrite the functions $\phi_N$ as follows
$$\phi_N(t,y^N)=\theta(t-t_0)+\E[Z^N.\,S(Y^N,X^N)]+\cfrac{|t-t_0|^2}{2\gamma_N}+\cfrac{	D(X^N,Y^N)^2}{2\gamma_N}.$$
\newline
We now take a point of maxima $(t^N,y^N)\in [0,T]\times (\T^d)^N$ of $u_N-\phi_N$ on $[0,T]\times (\T^d)^N$.
\newline
\underline{Step 3: For $N$ large $(t_N,y^N)$ is near $(t_0,X_0)$.}
\newline
The goal of this part is to prove that for a good choice of $(\gamma_N)_{N\ge 1}$ we have $$\cfrac{|t_N-t_0|}{\gamma_N}+\cfrac{D(X^N,Y^N)}{\gamma_N}\underset{N\to+\infty}{\longrightarrow} 0.$$
Using that $(t_N,y^N)$ is a point of maximum of $u_N-\phi_N$ we have 
$$u_N(t_N,y^N)-\phi_N(t_N,y^N)\ge u_N(t_0,x^N)-\phi_N(t_0,x^N)=u_N(t_0,x^N).$$
This yields 
\begin{equation}
\label{eq:boundfunctiontest}
\begin{split}
\phi_N(t_N,y^N)&=\theta(t_N-t_0)+\cfrac{|t_N-t_0|^2}{2\gamma_N}+\E[Z^N\cdot\,S(Y^N,X^N)]+\cfrac{D(X^N,Y^N)^2}{2\gamma_N}\\
&\le u_N(t_N,y^N)-u_N(t_0,x^N)\\
&\le 2\delta_N+\tilde V(t_N,Y^N)-\tilde V(t_0,X^N)\\
&\le 3\delta_N+\tilde V(t_N,Y^N)-\tilde V(t_0,X_0).
\end{split}
\end{equation}
To bound the right term in \eqref{eq:boundfunctiontest} we now use the fact that $(\theta,Z)\in \partial^+ \tilde V(t_0,X_0)$ to obtain that choosing $V^N:=\zeta(Y^N,X_0)$ as a velocity 
\begin{equation}
\label{eq:boundsousdif}
\begin{split}
&\tilde V(t_N,Y^N)-\tilde V(t_0,X_0)\le\\
&\theta(t_N-t_0)+\E[Z\cdot V^N]+\left(\sqrt{|t_N-t_0|^2+D(X_0,Y^N)^2}\right)\omega\left(\sqrt{|t_N-t_0|^2+D(X_0,Y^N)^2}\right),
\end{split}
\end{equation}
where we used that $$\|V^N\|_2^2=\E [|\zeta(Y^N,X_0)|^2]=\E [d_{\T^d}(Y_N,X_0)^2]\le C D(X_0,Y^N)^2.$$
Moreover using \eqref{eq:sinusborne1} there exists a constant $C$ such that $$D(X_0,Y^N)^2\le C(D(X_0,X^N)^2+D(X^N,Y^N)^2)
\le C(D(X^N,Y^N)^2+\delta_N^2)$$ which gives that (up to changing the modulus of continuity $\omega$ in $\omega'(t)=C\omega(Ct)$)
\begin{equation}
\label{eq:boundsousdif2}
\begin{split}
\tilde V(t_N,Y^N)-&\tilde V(t_0,X_0)\le\theta(t_N-t_0)+\E[Z\cdot V^N]+\\
&\left(\sqrt{|t_N-t_0|^2+D(X^N,Y^N)^2}+\delta_N\right)\omega\left(\sqrt{|t_N-t_0|^2+D(X^N,Y^N)^2}+\delta_N\right).
\end{split}
\end{equation}
Introducing $h_N^2:=|t_N-t_0|^2+D(X^N,Y^N)^2$ and combining \eqref{eq:boundfunctiontest} and \eqref{eq:boundsousdif2} give
\begin{equation} 
\label{eq:boundfunctiontest2}
\begin{split}
\cfrac{h_N^2}{2\gamma_N}&\le 3\delta_N+\E[Z\cdot V^N]-\E[Z^N\cdot\,S(Y^N,X^N)]+(h_N+\delta_N)\omega\left(h_N+\delta_N\right)\\
&=: 3\delta_N+\Delta_N+(h_N+\delta_N)\omega\left(h_N+\delta_N\right).
\end{split}
\end{equation}
The problem here is that $\Delta_N$ depends on $Y^N$ and thus on $\gamma_N$. Hence it is not clear how to choose $\gamma_N$ to make $h_N/\gamma_N$ goes to 0 directly using \eqref{eq:boundfunctiontest2}. We have to bound $\Delta_N$ in order to proceed:
\begin{equation}
\label{eq:bounddedelta}
\begin{split}
\Delta_N&=\E[Z\cdot V^N]-\E[Z^N\cdot\,S(Y^N,X^N)]\\
&=\E[Z\cdot (V^N-S(Y^N,X^N)]+\E[(Z-Z^N)\cdot\,S(Y^N,X^N)]\\
&\le \E[Z\cdot (V^N-S(Y^N,X^N)]+\|Z-Z^N\|_2\|S(Y^N,X^N)\|_2\\
&\le \E[Z\cdot (V^N-S(Y^N,X^N)]+C\|Z-Z^N\|_2,
\end{split}
\end{equation}
where we used the Cauchy-Schwarz inequality and the fact that the sinus is bounded.
We get  
\begin{equation}
\label{eq:bounddedelta2}
\begin{split}
\Delta_N&\le \E[Z\cdot (V^N-S(Y^N,X^N)]+C\|Z-Z_N\|_2\\
&\le \E[Z\cdot (V^N-S(Y^N,X_0)]+\E[Z\cdot (S(Y^N,X_0)-S(Y^N,X^N)]+C\|Z-Z^N\|_2 \\
&\le  \E[Z\cdot (V^N-S(Y^N,X_0)]+\|Z\|_2\|S(Y^N,X_0)-S(Y^N,X^N)\|_2+C\|Z-Z^N\|_2\\
&\le \E[Z\cdot (V^N-S(Y^N,X_0)]+C\|X^N-X_0\|_2+C\|Z-Z^N\|_2,
\end{split}
\end{equation}
where we used the Cauchy-Schwarz inequality and that the sinus is a Lipschitz function.
\newline
It remains to bound $\E[Z\cdot (V^N-S(Y^N,X_0)]$ using \eqref{eq:sinusborne2}:
\begin{align*}
E[Z\cdot (\zeta(Y^N,X_0)-S(Y^N,X_0)]&\le \E[|Z|\,|\zeta(Y^N,X_0)-S(Y^N,X_0)|]\\
&\le C\E[|Z|\,|\zeta(Y^N,X_0)|^3]\\
&\le C \E[|Z|\,|\zeta(Y^N,X_0)|^2]
\end{align*}
using that $\zeta$ is bounded. If $Z$ were bounded we could conclude but $Z$ is only in $L^2$ so we shall truncate it.
\newline
For $K>0$ we define $Z_K=Z \mathds{1}_{|Z|\le K}$ and we write 
\begin{align*}
\E[Z\cdot (\zeta(Y^N,X_0)-S(Y^N,X_0)]&\le C(\E[|Z-Z_K|\,|\zeta(Y^N,X_0)|^2]+\E[|Z_K|\,|\zeta(Y^N,X_0)|^2])\\
&\le C(\|Z-Z_K\|_2\|\,|\zeta(Y^N,X_0)|^2\|_2+K\E[|\zeta(Y^N,X_0)|^2])\\
&\le C(\|Z-Z_K\|_2(h_N+\delta_N)+K (h_N^2+\delta_N^2))
\end{align*}
In summary we obtain for every $K>0$
\begin{equation} 
\label{eq:boundfunctiontest3}
\begin{split}
\cfrac{h_N^2}{2\gamma_N}&\le C\delta_N+C(\|Z-Z_K\|_2(h_N+\delta_N)+K (h_N^2+\delta_N^2))+(h_N+\delta_N)\omega\left(h_N+\delta_N\right).
\end{split}
\end{equation}
First fixing a $K>0$, using that $(h_N)_{N\ge 1}$ is bounded and using that $(\gamma_N)_{N\ge 1}$ goes to 0, the inequality \eqref{eq:boundfunctiontest3} implies that $(h_N)_{N\ge 1}$ goes to 0. 
We set $H_N:=h_N/\gamma_N$ and $a_N:=\delta_N/\gamma_N$. Dividing \eqref{eq:boundfunctiontest3} by $\gamma_N$ yields
\begin{equation} 
\label{eq:boundfunctiontest4}
\begin{split}
\cfrac{H_N^2}{2}&\le C a_N+C(\|Z-Z_K\|_2(H_N+a_N)+K\gamma_N (H_N^2+a_N^2))+(H_N+a_N)\omega\left(h_N+\delta_N\right).
\end{split}
\end{equation}
This gives 
\begin{equation} 
\label{eq:boundfunctiontest5}
\begin{split}
\left(\cfrac{1}{2}-CK\gamma_N \right)H_N^2\le C a_N+C\|Z-Z_K\|_2 a_N+&CK\gamma_N a_N^2+a_N\omega\left(h_N+\delta_N\right)+\\
&H_N\left(C\|Z-Z_K\|_2+\omega\left(h_N+\delta_N\right)\right).
\end{split}
\end{equation}
We now choose $\gamma_N\ge \sqrt{\delta_N}$ which goes to 0 so that $(a_N)_{N\ge 1}$ goes to 0 and $K_N:=\gamma_N^{-1/2}$ goes to $+\infty$. So with this choice $(K_N\gamma_N)_{N\ge 1}$ goes to 0 and so for $N$ large enough we obtain
\begin{equation} 
\label{eq:boundfunctiontest6}
\begin{split}
\cfrac{H_N^2}{4}\le C a_N+C\|Z-Z_{K_N}\|_2 a_N+&CK_N\gamma_N a_N^2+a_N\omega\left(h_N+\delta_N\right)+\\
&H_N\left(C\|Z-Z_{K_N}\|_2+\omega\left(h_N+\delta_N\right)\right).
\end{split}
\end{equation}
Using Young's inequality for the right hand term yields 
\begin{equation} 
\label{eq:boundfunctiontest7}
\begin{split}
\cfrac{H_N^2}{8}\le C a_N+C\|Z-Z_{K_N}\|_2 a_N+&C\sqrt{\gamma_N} a_N^2+a_N\omega\left(h_N+\delta_N\right)+\\
&2\left(C\|Z-Z_{K_N}\|_2+\omega\left(h_N+\delta_N\right)\right)^2.
\end{split}
\end{equation}
Passing to the limit gives that $(H_N)_{N\ge 1}$ goes to 0 which yields 
\begin{equation}
\label{eq:penagoesto 0bis}
\cfrac{|t_N-t_0|}{\gamma_N}+\cfrac{D(X^N,Y^N)}{\gamma_N}\underset{N\to+\infty}{\longrightarrow} 0.
\end{equation}
Moreover, since $(D(X^N,X_0))_{N\ge 1}$ goes to 0 because $(X^N)_{N\ge 1}$ because to $X_0$ in $L^2$ we can choose $\gamma_N\ge \sqrt{D(X^N,X_0)}$ that goes to 0 such that $$\cfrac{D(X_0,Y^N)}{\gamma_N}\underset{N\to+\infty}{\longrightarrow} 0$$
\underline{Step 4: Convergence.}
\newline
Since $u_N$ is a smooth solution of \eqref{eq:edphofcole} it is in particular a viscosity solution and so using that $(t_N,y^N)$ is a point of maximum of $u_N-\phi_N$ we get 
\begin{equation}
\label{eq:passagelim1}
\begin{split}
\partial_t&\phi_N(t_N,y^N)\le\\
&\dis\sum_{i=1}^Nb(y_i^N,\mu_{y^N}^N).\nabla_{y_i}\phi_N(t_N,y^N)+\cfrac{\sigma_N^2}{2}\dis\sum_{i=1}^N\Delta_{y_i}\phi_N(t_N,y^N) +\cfrac{N}{2}\dis\sum_{i=1}^N|\nabla_{y_i}\phi_N(t_N,y^N)|^2.
\end{split}
\end{equation}
As in Section \ref{subsection:exptightness}, given $z=(z_1,...,z_d)\in \T^d$ we write $\cos(z)$ the diagonal matrix of size $d\times d$ with diagonal coefficient given by $(\cos(2\pi z_1),...,\cos(2\pi z_d))$. 
We can more explicitly write \eqref{eq:passagelim1} 
\begin{equation}
\label{eq:passagelim2}
\begin{split}
\theta +\cfrac{t_N-t_0}{\gamma_N}&\le\E\left[b(Y^N,\text{ Law}(Y^N))\cdot\left( \cos(Y^N-X^N) Z^N+\cfrac{S(Y^N,X^N)}{4\gamma_N}\right)\right] +\\
&\cfrac{\sigma_N^2}{2}\sum_{i=1}^N\Delta_{y_i}\phi_N(t_N,y^N) +\cfrac{1}{2}\,\E\left[\left|\cos(Y^N-X^N)Z^N+\cfrac{S(Y^N,X^N)}{4\gamma_N}\right|^2\right].
\end{split}
\end{equation}
About the second order term, using that the sinus and the cosinus are bounded we obtain that $$\left|\sum_{i=1}^N\Delta_{y_i}\phi_N(t_N,y^N)\right|\le C \left(D(Y^N,X^N)+\cfrac{1}{\gamma_N}\right).$$
Hence we obtain
\begin{equation}
\label{eq:passagelim3}
\begin{split}
\theta +\cfrac{t_N-t_0}{\gamma_N}&\le\E\left[b(Y^N,\text{ Law}(Y^N))\cdot \left(\cos(Y^N-X^N)Z^N+\cfrac{S(Y^N,X^N)}{4\gamma_N}\right)\right] +\\
&\cfrac{C\sigma_N^2}{2}\left(D(Y^N,X^N)+\cfrac{1}{\gamma_N}\right)+\cfrac{1}{2}\,\E\left[\left|\cos(Y^N-X^N)Z^N+\cfrac{S(Y^N,X^N)}{4\gamma_N}\right|^2\right].
\end{split}
\end{equation}
Using \eqref{eq:penagoesto 0bis} to obtain that the penalization goes to 0 we get that \begin{align*}
Z^N\overunderset{L^2}{N\to+\infty}{\longrightarrow} Z,&\,\,\,\,\cfrac{S(Y^N,X^N)}{\gamma_N}\le \cfrac {2D_N}{\gamma_N}\overunderset{L^2}{N\to+\infty}{\longrightarrow} 0,\\
\cos(Y^N-X^N)\overunderset{L^2}{N\to+\infty}{\longrightarrow} I_d,&\,\,\,\, b(Y^N,\text{ Law}(Y^N))\overunderset{L^2}{N\to+\infty}{\longrightarrow} b(X_0,\text{ Law}(X_0)).
\end{align*}
Since $(\sigma_N)_{N\ge 1}$ goes to 0 we can choose $\gamma_N\ge \sigma_N$ that goes to 0 and using that the cosinus and $b$ are bounded we can pass to limit in \eqref{eq:passagelim3} to obtain $$\theta\le \E[b(X_0,\text{ Law}(X_0))\cdot Z]+\cfrac{1}{2}\,\E \left[|Z|^2\right],$$
which proves that $\tilde V$ is a sub-solution of \eqref{eq:PDE}. This concludes the proof.
\end{proof}

\subsubsection{Stability with respect to the initial condition}
As explained in Section \ref{section:semigroupprop} we want to prove the convergence of $(\Lambda_N^{g_N})$ toward $\mathcal U^g$ where $g_N:(\T^d)^N\to\R$ is symmetric and converges uniformly toward a Lipschitz function $g:\mesTdeux\to \R$. 
\newline
Recalling that for instance in the case $m=2$ in Section \ref{section:semigroupprop} we are actually interested in the $g_N$ of the form $$g_N(\bullet)=f_1(\bullet)+\Lambda_N^{f_2}(t_2-t_1,\bullet)$$ we can actually assume that there exists a constant of $L$ such that for all $N\ge 1$, $g_N$ is Lipschitz with constant $L$ meaning that
\begin{equation}
\label{hypotheseuniflipschit}
\forall N\ge 1,\,\forall (x,y)\in ((\T^d)^N)^2,\,|g_N(x)-g_N(y)|\le L W_2(\mu_x^N,\mu_y^N).
\end{equation}
Looking carefully at the proof of the compactness in Proposition \ref{prop:compactness}, under the previous assumptions on $g_N$, the exact same proof works. Moreover in the proof of the convergence in Theorem \ref{thm:convergence} we directly prove that a solution of \eqref{eq:edphofcole} which converges uniformly necessarily converges toward a viscosity solution of the PDE \eqref{eq:PDE}. So we can state the following result. 
\begin{theorem}
\label{thm:convergencvestabilité} Let $g_N:(\T^d)^N\to \R$ be a sequence of symmetric functions that converges uniformly toward $g:\mesTdeux\to\R$ and which satisfies \eqref{hypotheseuniflipschit} for a constant $L$. Then $(\Lambda_N^{g_N})_{N\ge1}$ converges uniformly toward $\mathcal U^g$, the unique viscosity solution of \eqref{eq:PDE} with initial condition $g$.
\end{theorem}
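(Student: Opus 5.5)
The plan is to repeat the two-step scheme used for Theorem \ref{thm:convergence}: relative compactness of $(\Lambda_N^{g_N})_{N\ge1}$ via Theorem \ref{thm:arzela-ascoli}, then identification of every limit point as the unique viscosity solution with initial condition $g$ (Corollary \ref{coro:uniquenessofviscositysolution}). Uniqueness of the limit then gives convergence of the full sequence.

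\textbf{Compactness.} I would verify the two estimates of Proposition \ref{prop:compactness} with $f(\mu_y^N)$ replaced by $g_N(y)$.
\begin{enumerate}
\item Uniform boundedness. Since $g_N\to g$ uniformly and $g$ is bounded, $\|g_N\|_\infty$ is bounded uniformly in $N$. The comparison principle for the parabolic equation \eqref{eq:edphofcole} (constants are solutions) then gives $\|\Lambda_N^{g_N}\|_\infty\le \sup_N\|g_N\|_\infty$.
\item Spatial Lipschitz bound. The constant $C$ in the spatial estimate of Proposition \ref{prop:compactness} should depend only on the Lipschitz constant of the initial datum with respect to $W_2(\mu_x^N,\mu_y^N)$, on $K_b$ and on $T$. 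I would obtain it through a synchronous coupling of the two particle systems (same Brownian motions), using Gr\"onwall together with the Lipschitz property of $b$. Hypothesis \eqref{hypotheseuniflipschit} is exactly what replaces $L_f$ here, so the bound is uniform in $N$.
\item Time regularity. I would derive the $C(|t-s|+\sqrt{|t-s|})$ bound from the spatial bound and the Markov property, estimating the displacement of the particles over a short time as in Lemma \ref{prop:exptightness}.
\end{enumerate}
Theorem \ref{thm:arzela-ascoli} then produces a uniformly convergent subsequence with a continuous limit $\mathcal V$.

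\textbf{Initial condition.} At $t=0$ we have $\Lambda_N^{g_N}(0,y)=g_N(y)$. Uniform convergence of $g_N$ to $g$ therefore forces $\mathcal V(0,\cdot)=g$ on the empirical measures. Density of empirical measures and continuity of $\mathcal V$ and $g$ extend this to all of $\mesTdeux$.

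\textbf{Viscosity property.} I would rerun Steps 1--4 of the proof of Theorem \ref{thm:convergence} verbatim. That argument never uses the specific form of the initial datum. It uses only that $u_N$ is a classical solution of \eqref{eq:edphofcole} on $(0,T]$ and that $\delta_N\to0$, where $\delta_N$ involves $\sup|u_N-\mathcal V|$. The choice of $\gamma_N$ (at least $\sqrt{\delta_N}$, $\sigma_N$ and $\sqrt{D(X^N,X_0)}$) is unchanged. One point needs checking: $\Lambda_N^{g_N}$ must still be $C^{1,2}$ for $t>0$ when $g_N$ is merely Lipschitz. This holds by parabolic regularization since $\sigma_N>0$, and it is all the test-function argument needs. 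Proposition \ref{coro:liftsolimpliqueviscosol} then shows $\mathcal V$ is a viscosity solution, and Corollary \ref{coro:uniquenessofviscositysolution} gives $\mathcal V=\mathcal U^g$.

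\textbf{Main obstacle.} The expected difficulty is the uniform-in-$N$ Lipschitz and time estimates for a general symmetric $g_N$. In Appendix \ref{Appendix:D} these are presumably proved using $f$ directly. I would check that every occurrence of $L_f$ there can be replaced by $L$ from \eqref{hypotheseuniflipschit}, and that nothing relies on $g_N$ being of the form $f(\mu^N_\cdot)$ (it is still a function of the empirical measure by symmetry). For the intended application $g_N=f_1+\Lambda_N^{f_2}(t_2-t_1,\cdot)$, hypothesis \eqref{hypotheseuniflipschit} itself follows from Proposition \ref{prop:compactness}.
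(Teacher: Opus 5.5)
Your proposal is correct and follows the paper's argument. The paper observes that the proof of Proposition \ref{prop:compactness} goes through with $L_f$ replaced by the uniform constant $L$ of \eqref{hypotheseuniflipschit}, and that Steps 1--4 of the proof of Theorem \ref{thm:convergence} only use that $u_N$ is a classical solution of \eqref{eq:edphofcole} converging uniformly; Corollary \ref{coro:uniquenessofviscositysolution} then identifies the limit.

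Your variants for the a priori bounds also work. The synchronous coupling gives a pathwise Gr\"onwall bound, which passes directly through $\varepsilon_N^{-1}\log\E\exp(\varepsilon_N\,\cdot)$. For the upper time bound via the Markov property, however, you need an estimate at exponential scale, namely on $\varepsilon_N^{-1}\log\E_x\exp\bigl(\varepsilon_N C\,W_2(\mu^N_{X_h},\mu^N_x)\bigr)$ (obtainable for instance by Gaussian concentration), not merely on $\E_x W_2$. The paper avoids this by using the control representation together with Young's inequality.
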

As a direct consequence and application of the semigroup property of Proposition \ref{prop:markovproperty}, one can show by induction the following result which concludes the convergence of the transfer function of the $m-$tuple $(X_N(t_1),...,X_N(t_m))$.
\begin{corollary}
\label{Coro:convuniforme}
Given $0\le t_1\le ...\le t_m\le T$ and $f_1,...,f_m\in BL(\mesTdeux)$ we have the following convergence
\begin{equation}
\label{eq:laplacetransfofinidim3}
\sup_{x\in(\T^d)^N}\left|\cfrac{1}{\varepsilon_N}\log\left(\E_x\left[\exp\left(\varepsilon_N \sum_{i=1}^m f_i(X_N(t_i))\right)\right]\right)-\mathcal U^{f_1+\mathcal U^{f_2+...}(t_2-t_1)(\bullet)}(t_1,\mu_x^N)\right|\underset{N\to +\infty}{\longrightarrow} 0
\end{equation}
\end{corollary}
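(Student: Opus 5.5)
We argue by backward induction on the number of time marginals, using the recursive description of Proposition \ref{prop:markovproperty}. With $t_0=0$, define the limiting objects backward by
$$\Psi_m=\mathcal U^{f_m}(t_m-t_{m-1},\bullet),\qquad \Psi_k=\mathcal U^{f_k+\Psi_{k+1}}(t_k-t_{k-1},\bullet)\quad\text{for } k\le m-1.$$
Then $\Psi_1(\mu_x^N)$ is exactly the target quantity in \eqref{eq:laplacetransfofinidim3}. By Proposition \ref{prop:markovproperty}, the left-hand quantity equals $\psi_1^N(x)$. It therefore suffices to prove, for each $k$ from $m$ down to $1$, that
$$\sup_{x}|\psi_k^N(x)-\Psi_k(\mu_x^N)|\to 0,$$
and that $\psi_k^N$ is $W_2$-Lipschitz uniformly in $N$. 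We also need $\Psi_k$ to be Lipschitz (in $W_2$) and bounded.

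\textbf{Base case $k=m$.} Here $\psi_m^N=\Lambda_N^{f_m}(t_m-t_{m-1},\cdot)$. Theorem \ref{thm:convergence} gives uniform convergence to $\Psi_m(\mu_x^N)$. Proposition \ref{prop:compactness} gives the uniform bound $|\psi_m^N(x)-\psi_m^N(y)|\le C W_2(\mu_x^N,\mu_y^N)$, with $C$ depending only on $L_{f_m}$, $b$ and $T$. Passing to the limit along empirical measures, and using density of empirical measures in $\mesTdeux$ together with continuity of $\Psi_m$, shows that $\Psi_m$ is $C$-Lipschitz.

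\textbf{Inductive step.} Set $g_N:=f_k(\mu^N_\bullet)+\psi_{k+1}^N$. This is symmetric, because $\psi_{k+1}^N$ is: it is a transfer function of an exchangeable system with symmetric data. It is bounded, since $\|\Lambda_N^h\|_\infty\le\|h\|_\infty$. It converges uniformly to $g:=f_k+\Psi_{k+1}$ by the induction hypothesis, and it satisfies \eqref{hypotheseuniflipschit} with $L=L_{f_k}+C_{k+1}$. Theorem \ref{thm:convergencvestabilité} then yields $\Lambda_N^{g_N}(s,\cdot)\to\mathcal U^{g}(s,\cdot)$ uniformly in $s\in[0,T]$. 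Evaluating at $s=t_k-t_{k-1}$ gives the convergence claim for $\psi_k^N$. Note that the Lipschitz requirement on the limit $g$ demanded in the statement of Theorem \ref{thm:convergencvestabilité} is exactly what the inductive Lipschitz bound on $\Psi_{k+1}$ provides.

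The compactness estimate of Proposition \ref{prop:compactness}, applied with data $g_N$ as remarked before Theorem \ref{thm:convergencvestabilité}, gives $\psi_k^N$ a $W_2$-Lipschitz constant $C_k$ depending only on $L$, $b$ and $T$. The same density argument as in the base case transfers this constant to $\Psi_k$. This closes the induction, and $k=1$ gives \eqref{eq:laplacetransfofinidim3}. If some $t_k=t_{k-1}$, then $\Lambda_N^{h}(0,\cdot)=h$ and $\mathcal U^h(0,\cdot)=h$, so that step is trivial.

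\textbf{Main obstacle.} The delicate point is the claim that the Lipschitz constant of $\Lambda_N^{g_N}$ depends only on the Lipschitz constant of its initial data, uniformly in $N$, and not on $f$ itself. This must hold so that the constants $C_k$ stay finite through the $m$ iterations. I would obtain it from the coupling or control argument underlying Proposition \ref{prop:compactness}. Concretely, I would run two copies of the particle system driven by the same Brownian motions, starting from optimally matched configurations. A Gronwall estimate using the Lipschitz drift $b$ then gives $W_2(\mu_{x_t}^N,\mu_{y_t}^N)\le e^{2K_bt}W_2(\mu_x^N,\mu_y^N)$ pathwise, up to relabelling. Inserting this into the log-Laplace functional gives Lipschitz constant $Le^{2K_bT}$, which depends only on $L$ as required.
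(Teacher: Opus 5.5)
Your proposal is correct and follows the paper's route: the corollary is obtained by exactly this backward induction through the recursion of Proposition~\ref{prop:markovproperty}, applying at each step the stability theorem for symmetric, uniformly $W_2$-Lipschitz initial data $g_N$ stated just before the corollary, with the uniform Lipschitz bound on $\psi_k^N$ coming from re-running the estimate of Proposition~\ref{prop:compactness}. Your synchronous-coupling derivation of the constant $Le^{2K_bT}$ is a slightly more direct substitute for the optimal-control argument of Appendix~\ref{Appendix:D}, and it makes explicit what the paper leaves implicit: the constant depends on the data only through its Lipschitz constant, which keeps the constants finite over the $m$ iterations.
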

\subsection{Conclusion of the large deviations principle}
We can now state the large deviations theorem and completely prove the main theorem of this paper. 
\begin{theorem}
\label{thm:largedev2}
Assume that the family $(\mathrm{ Law}(X_N(0)))_{N\ge 1}$ satisfies a large deviations principle in $\mesTdeux$ at speed $\varepsilon_N:=N/\sigma_N^2$ with transfer function $\Lambda_0$ defined by $$\Lambda_0(g)= \underset{N\to +\infty}{\lim}\frac{1}{\varepsilon_N}\log\left(\E\left[\exp\left(\varepsilon_N g(X_N(0))\right)\right]\right),$$ for $g\in BL(\mesTdeux)$. 
\newline
Then the sequence of random variables $(X_N)_{N\ge1}$ satisfies a large deviation principle in $C([0,T],\mesTdeux)$ at speed $N/\sigma_N^2$ with good rate function $I$. More precisely, $I$ is given by
\begin{equation}
\label{eq:fonctiondetaux}
I(\mu)=\sup_{m\ge 1}\sup_{0\le t_1\le ...\le t_m\le T} I_{t_1,...,t_m}(\mu(t_1),...,\mu(t_m))
\end{equation}
where $$I_{t_1,...,t_m}(\mu_1,...,\mu_m)=\sup_{f_1,...,f_m\in BL(\mesTdeux)}\left[\sum_{i=1}^m f_i(\mu_i)-\Lambda_0(\mathcal U^{f_1+\mathcal U^{f_2+...}(t_2-t_1)(\bullet)}(t_1,\bullet))\right],$$
where for all $h\in BL(\mesTdeux)$, $\mathcal U^h:[0,T]\times \mesTdeux\to\R$ is the unique viscosity solution of \eqref{eq:PDE} with initial condition $h$.
\end{theorem}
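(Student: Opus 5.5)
The plan is to assemble the pieces already established, following the strategy outlined at the beginning of Section 3. First, by Theorem \ref{thm:exptightness}, $(X_N)_{N\ge1}$ is exponentially tight at speed $\varepsilon_N=N/\sigma_N^2$ in $C([0,T],\mesTdeux)$. By Theorem \ref{thm:prokhorovlargedev}, every subsequence has a further subsequence satisfying an LDP with some good rate function $J$. By Corollary \ref{coro:prokhorovlargedev}, it then suffices to show that any such $J$ equals the function $I$ of \eqref{eq:fonctiondetaux}. Along such a subsequence, the contraction principle (the evaluation map $\mu\mapsto(\mu(t_1),\dots,\mu(t_m))$ is continuous) gives an LDP for the finite dimensional marginals with good rate functions $J_{t_1,\dots,t_m}$. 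By Theorem \ref{thm:ldpmarginalstoproces} (or directly by the projective-limit identification), $J$ is the supremum over times of these $J_{t_1,\dots,t_m}$. So it is enough to identify each $J_{t_1,\dots,t_m}$ with $I_{t_1,\dots,t_m}$.

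For this identification I would use Proposition \ref{prop:largedevmarginals}. It says that $J_{t_1,\dots,t_m}$ is recovered by the inversion formula \eqref{eq:invformula} from
\[
\Lambda_m(f_1,\dots,f_m)=\lim_N \frac{1}{\varepsilon_N}\log \E\Big[\exp\Big(\varepsilon_N\sum_i f_i(X_N(t_i))\Big)\Big],\qquad f_i\in BL(\mesTdeux).
\]
The task is therefore to compute this limit. Conditioning on the initial configuration and using Proposition \ref{prop:markovproperty}, the inner conditional quantity equals $\psi_1^N(x_0)$. By Corollary \ref{Coro:convuniforme}, $\psi_1^N$ is uniformly close to $G(\mu^N_{x_0})$, where $G:=\mathcal U^{f_1+\mathcal U^{f_2+\dots}(t_2-t_1)(\bullet)}(t_1,\bullet)$. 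Writing $\eta_N=\sup_x|\psi_1^N(x)-G(\mu_x^N)|\to0$, we get
\[
\Big|\frac1{\varepsilon_N}\log\E[e^{\varepsilon_N\sum f_i(X_N(t_i))}]-\frac1{\varepsilon_N}\log\E[e^{\varepsilon_N G(X_N(0))}]\Big|\le\eta_N,
\]
so $\Lambda_m(f_1,\dots,f_m)=\Lambda_0(G)$. This holds provided $G\in BL(\mesTdeux)$ or at least $G\in C_b$. Boundedness is immediate. Lipschitz continuity passes to the limit from the uniform $W_2$-Lipschitz bound of Proposition \ref{prop:compactness}, iterated through the nested initial data satisfying \eqref{hypotheseuniflipschit}. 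In any case, $\Lambda_0$ extends to $C_b$ by the Varadhan/Bryc part of Theorem \ref{thm:largedevvaradhan} applied to the initial LDP on the compact space $\mesTdeux$. Since the limit along the subsequence equals the full-sequence quantity $\Lambda_0(G)$, it is independent of the subsequence. Plugging into \eqref{eq:invformula} gives $J_{t_1,\dots,t_m}=I_{t_1,\dots,t_m}$, hence $J=I$, and Corollary \ref{coro:prokhorovlargedev} concludes.

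The main obstacle is the bookkeeping in the nested/inductive step behind Corollary \ref{Coro:convuniforme}. One must check that each intermediate initial datum $g_N=f_k+\psi_{k+1}^N$ is symmetric, converges uniformly to $f_k+\mathcal U^{\dots}$, and is uniformly $W_2$-Lipschitz in $N$, so that Theorem \ref{thm:convergencvestabilité} applies at every stage. I would do this by backward induction on $k$, carrying along the Lipschitz constant $L_k=L_{f_k}+C(L_{k+1})$ from Proposition \ref{prop:compactness}. A minor further point is matching the ordering conventions (strict versus non-strict times) between \eqref{eq:ratefunctionprocess} and \eqref{eq:fonctiondetaux}; coinciding times reduce to fewer distinct times by merging $f_i$'s, so the two suprema agree.
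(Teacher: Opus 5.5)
Your proposal is correct and takes the same route as the paper: exponential tightness together with Corollary \ref{coro:prokhorovlargedev} reduces the problem to identifying the rate function along LDP subsequences, and that identification goes through Theorem \ref{thm:ldpmarginalstoproces}, the Markov property of Proposition \ref{prop:markovproperty} and the uniform convergence of Corollary \ref{Coro:convuniforme}. You also spell out steps the paper leaves implicit, namely the inversion formula of Proposition \ref{prop:largedevmarginals}, the $\eta_N$ comparison giving $\Lambda_m(f_1,\dots,f_m)=\Lambda_0(G)$ for the full sequence, and the check that $G$ is an admissible argument for $\Lambda_0$.
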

\begin{proof}
By Theorem \ref{thm:exptightness}, the sequence of random variables $(X_N)_{N\ge 1}$ is exponentially tight at speed $\varepsilon_N$. So, by Corollary \ref{coro:prokhorovlargedev} it remains to show that the rate function is uniquely characterized. Using Theorem \ref{thm:ldpmarginalstoproces}, Proposition \ref{prop:markovproperty} and Corollary \ref{Coro:convuniforme} the rate function is necessarily given by the formula \eqref{eq:fonctiondetaux} which proves the uniqueness and concludes the proof.
\end{proof}
\subsection{Explicit formula of the good rate function}
In this section we explain how to obtain an explicit formula of the rate function of $(X_N)_{N\ge 1}$. This formula is based on the representation of the solution of \eqref{eq:PDE} as the value function of an optimal control problem on the space of probability measures. 
\begin{theorem}
\label{thm:formuleexplicite}
Given $f\in BL(\mesTdeux)$, the unique viscosity solution of \eqref{eq:PDE} with initial condition $f$ is given by 
\begin{equation}
\label{eq:solutionexplicit}
\mathcal U^f(t,\mu)=\sup_{(\mu_s,v_s)_{s\in[0,t]}} \left\{f(\mu_t)-\cfrac{1}{2}\int_0^t\|v_s-b(.,\mu_s)\|_{L^2(\mu_s)}^2ds\right\},
\end{equation}
where the supremum is taken on all the $(\mu_s,v_s)_{s\in[0,t]}$ such that $\mu:[0,t]\to\mesTdeux$ is absolutely continuous, $(v_s)_{s\in[0,t]}$ satisfies $$\int_0^t\|v_s\|^2_{L^2(\mu_s)}ds<+\infty,$$ that are  solutions in the sense of distributions of the continuity equation $\partial_s \mu_s+\nabla.(\mu_s v_s)=0$ and with initial condition $\mu_0=\mu$.
\end{theorem}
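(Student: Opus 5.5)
By Corollary \ref{coro:uniquenessofviscositysolution}, it is enough to show that the value function $\mathcal W(t,\mu)$ defined by the right-hand side of \eqref{eq:solutionexplicit} is a continuous viscosity solution of \eqref{eq:PDE} with initial condition $f$. The initial condition is immediate: for $t=0$ the cost vanishes and $\mu_0=\mu$, so $\mathcal W(0,\cdot)=f$.

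\textbf{Properties of the value function.} I would first collect three properties of $\mathcal W$.

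\emph{Boundedness.} The choice $v_s=b(\cdot,\mu_s)$ gives zero cost; since $b$ is Lipschitz the corresponding flow exists. Hence $\|f\|_\infty\ge\mathcal W\ge -\|f\|_\infty$.

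\emph{A priori bound on near-optimal controls.} Any $\varepsilon$-optimal control satisfies $\frac12\int_0^t\|v_s-b\|^2_{L^2(\mu_s)}ds\le 2\|f\|_\infty+\varepsilon$.

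\emph{Regularity.} $\mathcal W$ is Lipschitz in $\mu$ and $\tfrac12$-Hölder in $t$, in the spirit of Proposition \ref{prop:compactness}. For the space estimate, transport a competitor curve from $\mu$ to one from $\nu$. Fix an optimal plan $\gamma$ between $\mu$ and $\nu$, lift the curve to a superposition of characteristics $\dot X=v_s(X)$ (superposition principle on $\T^d$), and use the same displacement $X_s-X_0$ started from $Y_0$. The cost increases only through $b(X_s,\cdot)$ versus $b(Y_s,\cdot)$, which is controlled by $K_b\,W_2(\mu,\nu)$ via Cauchy--Schwarz and Gronwall, since the displacements coincide. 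Time continuity then follows from the energy bound: $W_2(\mu_s,\mu)\le\sqrt{s}\,(\int\|v\|^2)^{1/2}$.

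\textbf{Dynamic programming principle.} Next I establish
\[
\mathcal W(t+h,\mu)=\sup\Big\{\mathcal W(t,\mu_h)-\tfrac12\int_0^h\|v_s-b(\cdot,\mu_s)\|^2_{L^2(\mu_s)}ds\Big\}.
\]
This uses concatenation of admissible curves and time-reversal of the problem indexing: running from $\mu$ forward, the payoff $f$ is evaluated at the final time.

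\textbf{Viscosity inequalities via the lift.} I would verify the viscosity inequalities on the lift $\tilde W$ and conclude with Proposition \ref{coro:liftsolimpliqueviscosol}, since perturbations along $X+hV$ are easier to handle in $\mathcal H$.

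\emph{Super-solution.} Take $(\theta,Z)\in\partial^-\tilde W(t,X)$. Plug into the DPP the flow $\dot X_s=b(X_s,\mathrm{Law}(X_s))+Z_\delta(X_s)$, where $Z_\delta$ is a smooth (or Lipschitz) approximation of $Z$ written as a function of $X$. The DPP gives
\[
\tilde W(t+h,X)\ge\tilde W(t,X_h)-\tfrac h2\|Z\|^2+o(h).
\]
Expanding with the sub-differential at $(t,X)$, after matching the time shift, produces
\[
\theta\ge\E[b\cdot Z]+\tfrac12\E|Z|^2.
\]

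\emph{Sub-solution.} Take $(\theta,Z)\in\partial^+$ and a near-optimal control on $[0,h]$. Then
\[
\tilde W(t+h)-h^2\le \tilde W(t,X_h)-\tfrac12\int_0^h\|v-b\|^2,
\]
and the super-differential linearisation gives $\tilde W(t,X_h)\le \tilde W(t,X)+\E[Z\cdot(X_h-X)]+o(\cdot)$. Writing $X_h-X=\int_0^h v_s\,ds$ and using $\E[Z\cdot v]-\tfrac12|v-b|^2\le \E[Z\cdot b]+\tfrac12|Z|^2$ (complete the square), one gets the reverse inequality after dividing by $h$.

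\textbf{Main obstacle.} The hard step is the sub-solution part. The near-optimal controls are only $L^2$ in time, so the remainder term $o(W_2(\mu_h,\mu))$ must be shown to be $o(h)$. The a priori bound only gives $W_2(\mu_h,\mu)=O(\sqrt h)$, which is not enough. To fix this I would argue that, on $[0,h]$, near-optimal controls actually satisfy $\int_0^h\|v-b\|^2=O(h)$. I would try to extract this by comparing with the zero-cost competitor inside the DPP and using the Lipschitz continuity of $\mathcal W$ from the regularity step, which yields $\int_0^h\|v-b\|^2\lesssim \sqrt{h}\,(\int_0^h\|v-b\|^2)^{1/2}$ and hence $O(h)$. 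A second, more technical point is representing $v_s$ as a Lagrangian velocity in $\mathcal H$ so that $\E[Z\cdot(X_h-X)]$ is meaningful; I would use the superposition principle to realise the curve on $(0,1)$.
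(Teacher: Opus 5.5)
Your route differs from the paper's. The paper only gives a formal computation assuming the value function is smooth (dynamic programming, first-order expansion, optimization in $v$) and refers to Section 13 of \cite{bertucci2026analysis} for the actual proof. You instead outline a genuine verification argument closed by Corollary \ref{coro:uniquenessofviscositysolution}. The skeleton (bounds, regularity, DPP) is sound, and your treatment of the sub-solution obstacle is correct. Set $\beta_h=\int_0^h\|v_s-b(\cdot,\mu_s)\|^2_{L^2(\mu_s)}ds$. Comparing an $h^2$-optimal control with $v=b$ and using the Lipschitz bound in $\mu$ gives $\beta_h\le C\sqrt h\,\beta_h^{1/2}+Ch$. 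Hence $\beta_h=O(h)$, the displacement is $O(h)$, the remainder is $o(h)$, and completing the square concludes.

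Watch the bookkeeping, though: the test point must be the later time. Use the DPP on $[t-h,t]$ and evaluate the differential at $(t,X)$ at the point $(t-h,X_h)$, so that $\theta$ enters as $-\theta h$. In your sub-solution paragraph the linearisation $\tilde W(t,X_h)\le\tilde W(t,X)+\dots$ contains no $\theta$ at all.

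\textbf{The genuine gap is in the super-solution step.} Via Proposition \ref{coro:liftsolimpliqueviscosol} you must treat $(X,Z)$ with $\mathrm{Law}(X,Z)=\mu(dx)\psi_x(dz)$ for an arbitrary $\psi\in\partial^-\mathcal W(t,\mu)$. So $Z$ is in general not $\sigma(X)$-measurable and cannot be approximated in $L^2$ by functions of $X$. With any Eulerian perturbation $b+\varphi(X)$ the best you can get is $\theta\ge\E[Z\cdot b(X,\mu)]+\tfrac12\E\big[|\E[Z\,|\,X]|^2\big]$, which is strictly weaker than what is needed.

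Nor can you reduce to Dirac $\psi$. The barycenter $\bar\psi(x)=\int z\,\psi_x(dz)$ does belong to $\partial^-\mathcal W(t,\mu)$, but the super-solution inequality for $\psi$ is stronger than the one for $\bar\psi$, since $\int|z|^2\psi_x(dz)\ge|\bar\psi(x)|^2$.

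The fix is to let $Z$ itself be a random velocity: solve $\dot X_s=b(X_s,\mathrm{Law}(X_s))+Z$ with $X_0=X$.
\begin{itemize}
\item Then $\mu_s=\mathrm{Law}(X_s)$ solves the continuity equation with velocity $b(\cdot,\mu_s)+\E[Z\,|\,X_s=\cdot\,]$. By Jensen, its cost on $[0,h]$ is at most $\tfrac h2\E|Z|^2$.
\item The displacement $V_h=\int_0^h\big(b(X_s,\mu_s)+Z\big)ds$ satisfies $\E[Z\cdot V_h]=h\E[Z\cdot b(X,\mu)]+h\E|Z|^2+O(h^2)$.
\end{itemize}
Together these give exactly $\theta\ge\E[Z\cdot b(X,\mu)]+\tfrac12\E|Z|^2$.

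A related caveat applies to the sub-solution step. The superposition of a near-optimal curve, whose characteristics may branch, need not be realisable on $(0,1)$ with initial value equal to the prescribed $X$, e.g.\ when $\sigma(X)$ is the whole Borel $\sigma$-algebra. The proof of Proposition \ref{coro:liftsolimpliqueviscosol} uses only one representative of $\mu(dx)\psi_x(dz)$, so you have two options. Either choose that representative with spare independent randomness, or work directly with Definition \ref{def:differentialtore} and glue $\mu(dx)\psi_x(dz)$ with the plan $\mathrm{Law}\big(X_0,\int_0^h v_s(X_s)\,ds\big)$ along $x$.
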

The complete proof is given in Section 13 of \cite{bertucci2026analysis}.
For readers that are not familiar with optimal control we give a proof in the case when we know that the value function is smooth (which is in general not the case).
\begin{proof}
For $t\ge 0$, $\mu\in\mesTdeux$ we define $$\mathcal V(t,\mu):=\sup_{(\mu_s,v_s)_{s\in[0,t]}} \left\{f(\mu_t)-\cfrac{1}{2}\int_0^t\|v_s-b(.,\mu_s)\|_{L^2(\mu_s)}^2 ds\right\},$$
where the supremum is taken on all the $(\mu_s,v_s)_{s\in[0,t]}$ as in the statement.
We assume that this function is smooth. 
Using the dynamic programming principle we have for all $t,h\ge 0$ and $\mu \in\mesTdeux$
$$ \mathcal V(t+h,\mu):=\sup_{(\mu_s,v_s)_{s\in[0,h]}} \left\{\mathcal V(t,\mu_h)-\cfrac{1}{2}\int_0^h\|v_s-b(.,\mu_s)\|_{L^2(\mu_s)}^2ds\right\}.$$
Doing a Taylor expansion when $h$ goes to $0$ yields 
\begin{equation}
\left\{
\begin{aligned}
\mathcal V(t+h,\mu)&=\mathcal V(t,\mu)+h\partial_t \mathcal V(t,\mu)+o(h)\\
\mathcal V(t,\mu_h)&=\mathcal V(t,\mu)+h\int_{\T^d}D_\mu \mathcal V(t,\mu,x)\cdot v_0\mu(dx)+o(h)\\
\cfrac{1}{2}\int_0^h\|v_s-b(.,\mu_s)\|_{L^2(\mu_s)}^2ds&=\cfrac{h}{2}\,\|v_0-b(.,\mu)\|_{L^2(\mu)}^2 +o(h)
\end{aligned}
\right.
\end{equation}
Dividing by $h$ and letting $h$ goes to 0, we formally obtain that 
\begin{align*}
\partial_t \mathcal V(t,\mu)&=\sup_v\left\{\int_{\T^d}D_\mu \mathcal V(t,\mu,x)\cdot v\mu(dx)-\cfrac{1}{2}\,\int_{\T^d}|v-b(x,\mu)|^2\mu(dx)\right\}\\
&=\int_{\T^d}b(y,\mu)\cdot D_\mu\mathcal V(t,\mu,x)\mu(dx)+\cfrac{1}{2}\int_{\T^d}|D_{\mu}\mathcal V(t,\mu,x)|^2\mu(dx),
\end{align*}
since the supremum is reached for $v(x)=b(x,\mu)+D_\mu\mathcal V(t,\mu,x)$.
\end{proof}
We first give another expression of the rate function of the $m$-tuple, using the explicit formula of the solution of the Hamilton-Jacobi equation \eqref{eq:PDE}.
\begin{lemma}
\label{lemma:rewriteratefunctionmtuple}
For all $0\le t_1\le ...\le t_m$, $(\nu_1,...,\nu_m)\in(\mesTdeux)^m$ we have
\begin{align*}
I_{t_1,...,t_m}(\nu_1,...,\nu_m)=\inf_{\substack{(\mu_s,v_s)_{s\in[0,t_m]}\\\partial_s\mu_s+\nabla\cdot(\mu_sv_s)=0,\\\mu_{t_i}=\nu_i, \,\forall 1\le i\le m}}\left[\cfrac{1}{2}\int_0^{t_m}\|v_s-b(.,\mu_s)\|_{L^2(\mu_s)}^2ds+I_0(\mu(0))\right]. 
\end{align*}
\end{lemma}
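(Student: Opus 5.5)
Set $\ell(\mu,v):=\frac12\|v-b(\cdot,\mu)\|_{L^2(\mu)}^2$ and let $J(\nu_1,\dots,\nu_m)$ be the infimum on the right-hand side. The plan is to compute the nested transfer function explicitly with Theorem \ref{thm:formuleexplicite}, then identify $I_{t_1,\dots,t_m}$ by Legendre duality together with the inversion formula of Proposition \ref{prop:largedevmarginals}.

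\textbf{Step 1: the nested value function.} Put $g_m:=f_m$ and $g_k:=f_k+\mathcal U^{g_{k+1}}(t_{k+1}-t_k,\bullet)$. Applying Theorem \ref{thm:formuleexplicite} to each $g_k$ and concatenating controlled curves (the dynamic programming principle) should give
$$\mathcal U^{g_1}(t_1,\mu)=\sup\Big\{\sum_{i=1}^m f_i(\mu_{t_i})-\int_0^{t_m}\ell(\mu_s,v_s)\,ds\Big\},$$
with the supremum over admissible $(\mu_s,v_s)_{s\in[0,t_m]}$ started at $\mu_0=\mu$. Concatenation is legitimate because absolutely continuous solutions of the continuity equation glue at the times $t_i$. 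One caveat is that the $g_k$ are Lipschitz but a priori only bounded Lipschitz after checking; Proposition \ref{prop:compactness} gives the needed Lipschitz bounds in $W_2$, so Theorem \ref{thm:formuleexplicite} applies to each $g_k$.

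\textbf{Step 2: rewrite the Laplace functional.} Recall the initial LDP $\Lambda_0(h)=\sup_\mu\{h(\mu)-I_0(\mu)\}$, which holds by Varadhan's lemma since everything is bounded. Then
$$\Lambda_0(\mathcal U^{g_1}(t_1,\bullet))=\sup_{\nu_1,\dots,\nu_m}\Big\{\sum_i f_i(\nu_i)-J(\nu_1,\dots,\nu_m)\Big\},$$
obtained by first fixing the endpoints $\mu_{t_i}=\nu_i$ and then optimizing. So $\Lambda_m(f_1,\dots,f_m)$ is the Legendre-type transform of $J$ against the separable test functions $\sum_i f_i(x_i)$.

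\textbf{Step 3: identify $I_{t_1,\dots,t_m}$ with $J$.} By Proposition \ref{prop:largedevmarginals}, $I_{t_1,\dots,t_m}$ is the unique good rate function whose transform over sums $\sum f_i$ equals $\Lambda_m$. The transform in Step 2 has exactly this form, so it suffices to show that $J$ is a good rate function. Uniqueness (equivalently, running the inversion formula \eqref{eq:invformula} backward) then forces $I_{t_1,\dots,t_m}=J$. Alternatively, one can apply directly the biduality argument of Appendix~\ref{Appendix:B}, which only needs $J$ to be lsc with compact sublevel sets.

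\textbf{Main obstacle: goodness of $J$.} I would prove lower semicontinuity and compactness of sublevel sets of $J$ as follows. Along a sequence of near-optimal curves with bounded cost, boundedness of $b$ gives $\int_0^{t_m}\|v_s\|_{L^2(\mu_s)}^2\,ds\le C$. This yields equi-$1/2$-Hölder continuity in $W_2$, and $\mathcal P(\T^d)$ is compact, so Arzelà–Ascoli applies. I would then pass to the limit in the momenta $E_s=v_s\mu_s$ using the convex lsc Benamou–Brenier functional $\int|E-b\mu|^2/\mu$, combined with continuity of $b$ in $\mu$; this gives lower semicontinuity of the cost and that the infimum is attained. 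Lower semicontinuity of $I_0$ handles the term $I_0(\mu(0))$.
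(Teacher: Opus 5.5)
Your proposal is correct and follows essentially the paper's route: the nested control formula obtained by bootstrapping Theorem~\ref{thm:formuleexplicite}, the easy inequality $I_{t_1,\dots,t_m}\le J$, and, for the reverse inequality, the penalizing test functions of Appendix~\ref{Appendix:B} combined with compactness of curves of bounded action and lower semicontinuity of both the action and $I_0$. The difference is organizational: you isolate lower semicontinuity of $J$ and then invoke the Appendix~\ref{Appendix:B} biduality abstractly, whereas the paper runs the same compactness argument directly on near-optimal curves of the penalized problems. Your Step~2 endpoint identity also makes clear why no minimax theorem is needed, and the biduality in fact only requires $J\ge 0$ and lsc, with goodness automatic since $(\mesTdeux)^m$ is compact.
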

\begin{proof}
We recall that the transfer function associated to the LDP of $(X_N(0))_{N\ge 1}$ is given by $\Lambda_0(g)=\sup_{\nu\in\mesTdeux} (g(\nu)-I_0(\nu))$.
\newline
Bootstrapping Theorem \ref{thm:formuleexplicite}, we get that for all $0<t_1\le...\le t_m\le T$ and $\mu\in\mesTdeux$
 \begin{equation}
\label{eq:bootsratpingsol}
\begin{split}
\mathcal U^{f_1+\mathcal U^{f_2+...}(t_2-t_1)(\bullet)}(t_1,\mu)&=\sup_{(\mu_s,v_s)_{s\in[0,t_m]}} \left\{\sum_{i=1}^m f_i(\mu_{t_i})-\cfrac{1}{2}\int_0^{t_m}\|v_s-b(.,\mu_s)\|_{L^2(\mu_s)}^2ds\right\}\\
&=:\mathcal V^m(\mu),
\end{split}
\end{equation}
where the supremum is taken on all the $(\mu_s,v_s)_{s\in[0,t_m]}$ such that $\mu:[0,t_m]\to\mesTdeux$ is absolutely continuous, that are  solutions in the sense of distributions of the continuity equation $\partial_t \mu_t+\nabla.(\mu_t v_t)=0$, with initial condition $\mu_0=\mu$ and that satisfy $$\int_0^{t_m}\|v_s\|^2_{L^2(\mu_s)}ds<+\infty.$$ 
Hence, for all $0< t_1\le ...\le t_m$, $(\nu_1,...,\nu_m)\in(\mesTdeux)^m$
\begin{equation}
\label{eq:inversionI}
\begin{split}
I_{t_1,...,t_m}(\nu_1,...,\nu_m)&=\sup_{f_1,...,f_m\in BL(\mesTdeux)}\left[\sum_{i=1}^m f_i(\nu_i)-\Lambda_0(\mathcal U^{f_1+\mathcal U^{f_2+...}(t_2-t_1)(\bullet)}(t_1,\bullet))\right]\\
&=\sup_{f_1,...,f_m\in BL(\mesTdeux)}\left[\sum_{i=1}^m f_i(\nu_i)-\Lambda_0(\mathcal V^m(\mu))\right]\\
&=\sup_{f_1,...,f_m\in BL(\mesTdeux)}\left[\sum_{i=1}^m f_i(\nu_i)-\sup_{\mu\in\mesTdeux}(\mathcal V^m(\mu)-I_0(\mu))\right]\\
&=\sup_{f_1,...,f_m\in BL(\mesTdeux)}\inf_{\mu\in\mesTdeux}\left[\sum_{i=1}^m f_i(\nu_i)-(\mathcal V^m(\mu)-I_0(\mu))\right]\\
&=\sup_{f_1,...,f_m\in BL(\mesTdeux)}\inf_{\substack{(\mu_s,v_s)_{s\in[0,t_m]}\\\partial_s \mu_s+\nabla.(\mu_s v_s)=0\\
\mu(0)=\mu}}\left[\sum_{i=1}^m f_i(\nu_i)-\sum_{i=1}^m f_i(\mu_{t_i})\right.+\\
&\hspace{4.5cm}\left.\cfrac{1}{2}\int_0^{t_m}\|v_s-b(.,\mu_s)\|_{L^2(\mu_s)}^2ds+I_0(\mu)\right].
\end{split}
\end{equation}
If we could exchange the supremum and the infimum and noticing that $$\sup_{f_1,...,f_m\in BL(\mesTdeux)}\sum_{i=1}^m (f_i(\nu_i)-f_i(\mu_{t_i}))$$ equals 0 if for all $1\le i\le m$ $\mu_{t_i}=\nu_i$ and $+\infty$ otherwise, we would have the result: 
\begin{align*}
I_{t_1,...,t_m}(\nu_1,...,\nu_m)&=\inf_{\substack{(\mu_s,v_s)_{s\in[0,t_m]}\\\mu_{t_i}=\nu_i, \,\forall 1\le i\le m\\\partial_s \mu_s+\nabla.(\mu_s v_s)=0}}\left[\cfrac{1}{2}\int_0^{t_m}\|v_s-b(.,\mu_s)\|_{L^2(\mu_s)}^2ds+I_0(\mu(0))\right]. 
\end{align*}
Nonetheless it does not seem clear how to exchange these supremum and infimum using a min-max theorem because we lack some convexity properties in $(\mu_s,v_s)_{s\in[0,t_m]}$. So, we prove directly the result. 
\newline
Let us define $$\mathcal J^m:=\inf_{\substack{(\mu_s,v_s)_{s\in[0,t_m]}\\\mu_{t_i}=\nu_i, \,\forall 1\le i\le m\\\partial_s \mu_s+\nabla.(\mu_s v_s)=0}}\left[\cfrac{1}{2}\int_0^{t_m}\|v_s-b(.,\mu_s)\|_{L^2(\mu_s)}^2ds+I_0(\mu(0))\right].$$
Using that we always have $\sup_{x\in A}\inf_{y\in B} f(x,y) \le \inf_{y\in B}\sup _{x\in A} f(x,y)$ for all sets $A,B$ and real-valued functions $f(x,y)$ we get 
$$I_{t_1,...,t_m}(\nu_1,...,\nu_m)\le \mathcal J^m.$$
So it remains to show the reverse inequality that is $\mathcal J^m\le I_{t_1,...,t_m}(\nu_1,...,\nu_m)$.
\newline
Of course the result is obvious if $I_{t_1,...,t_m}(\nu_1,...,\nu_m)=+\infty$ and so let us deal with the case $I_{t_1,...,t_m}(\nu_1,...,\nu_m)<+\infty$.
\newline
Recall from the proof of Proposition \ref{prop:largedevmarginals} that if we define the functions $f_i^k:\mesTdeux \to \R$ for $k\in\N$ and $1\le i\le m$ by $f_i^k(\mu)=-k W_2(\mu,\nu_i)$ which are in $BL(\mesTdeux)$ and $\Lambda_m$ is as in Proposition \ref{prop:largedevmarginals}, we get 
\begin{equation}
\label{eq:autreformulefonctiontaux}
\begin{split}
I_{t_1,...,t_m}(\nu_1,...,\nu_m)&=-\lim_{k\to+\infty}\Lambda_m(f_1^k,...,f_m^k)\\
&=-\lim_{k\to+\infty}\Lambda_0(\mathcal U^{f_1^k+\mathcal U^{f_2^k+...}(t_2-t_1)(\bullet)}(t_1,\bullet))\\
&=\lim_{k\to+\infty}\inf_{\substack{(\mu_s,v_s)_{s\in[0,t_m]}\\\partial_s \mu_s+\nabla.(\mu_s v_s)=0\\
\mu(0)=\mu}}\left[-\sum_{i=1}^m f_i^k(\mu_{t_i})\right.
+\\
&\hspace{4.5cm}\left.\cfrac{1}{2}\int_0^{t_m}\|v_s-b(.,\mu_s)\|_{L^2(\mu_s)}^2ds+I_0(\mu)\right],
\end{split}
\end{equation}
where for the last equality we used the same computations as in \eqref{eq:inversionI}.
So for $\varepsilon>0$, we can consider an integer $k_0$ such that for every $k\ge k_0$ \begin{align*}\inf_{\substack{(\mu_s,v_s)_{s\in[0,t_m]}\\\partial_s \mu_s+\nabla.(\mu_s v_s)=0\\
\mu(0)=\mu}}\left[-\sum_{i=1}^m f_i^k(\mu_{t_i})+\cfrac{1}{2}\int_0^{t_m}\|v_s-b(.,\mu_s)\|_{L^2(\mu_s)}^2ds+I_0(\mu)\right]& \le\\
&\hspace{-1cm}I_{t_1,...,t_m}(\nu_1,...,\nu_m)+\varepsilon.
\end{align*}
Hence we can construct a of functions $(\mu_s^k,v_s^k)_{s\in[0,t_m]}$ such that $\partial_s \mu_s^k+\nabla.(\mu_s^k v_s^k)=0$ in the sense of distributions that satisfies for every $k\ge k_0$
\begin{align*}
-\sum_{i=1}^m f_i^k(\mu_{t_i}^k)+\cfrac{1}{2}\int_0^{t_m}\|v_s^k-b(.,\mu_s^k)\|_{L^2(\mu_s^k)}^2ds+I_0(\mu_0^k) \le  I_{t_1,...,t_m}(\nu_1,...,\nu_m)+2\varepsilon.
\end{align*}
By definition of the functions $f_i^k$ this yields that for every $k\ge k_0$:
\begin{equation}
\label{eq:ineqboundepsilon}
k\sum_{i=1}^m W_2(\mu_{t_i}^k,\nu_i)+\cfrac{1}{2}\int_0^{t_m}\|v_s^k-b(.,\mu_s^k)\|_{L^2(\mu_s^k)}^2ds+I_0(\mu_0^k) \le  I_{t_1,...,t_m}(\nu_1,...,\nu_m)+2\varepsilon.
\end{equation}
First by noticing that we get from the previous inequality that for all $k\ge k_0$ $$k\sum_{i=1}^m W_2(\mu_{t_i}^k,\nu_i)\le I_{t_1,...,t_m}(\nu_1,...,\nu_m)+2\varepsilon,$$
dividing by $k$ and letting $k$ goes to $+\infty$ implies that for all $1\le i\le m$ \begin{equation}
\label{eq:W_2boundpourtouti}
W_2(\mu_{t_i}^k,\nu_i)\underset{k\to+\infty}{\longrightarrow}0.
\end{equation}
In the rest of the proof we denote by $C$ any constant that only depends on $t_1,...,t_m$, $\nu_1,...,\nu_m$, $\|b\|_\infty$ and $\varepsilon>0$.
\newline 
Let us remark that for $k\ge k_0$
\begin{align*}
\int_0^{t_m}\|v_s^k\|_{L^2(\mu_s^k)}^2ds&\le \cfrac{C}{2}\int_0^{t_m}\|v_s^k-b(.,\mu_s^k)\|_{L^2(\mu_s^k)}^2ds+C\int_0^{t_m}\|b(.,\mu_s^k)\|_{L^2(\mu_s^k)}^2ds\\
&\le  C(I_{t_1,...,t_m}(\nu_1,...,\nu_m)+2\varepsilon)+C=:C.
\end{align*}
Following the arguments of the proof of Theorem 5.14 in Section 5.3.2 of \cite{santambrogio2015opti}, from this bound we can classically construct $(\mu_s)_{s\in[0,t_m]}\in C([0,t_m],\mesTdeux)$ and $(v_s)_{s\in[0,t_m]}$ such that $$\int_0^{t_m}\|v_s\|_{L^2(\mu_s)}^2ds\le \liminf_{k\to+\infty} \int_0^{t_m}\|v_s^k\|_{L^2(\mu_s^k)}^2ds\le C$$
and so that $\partial_s\mu_s+\nabla\cdot(\mu_sv_s)=0$ in the sense of distributions and for which $(\mu^k)_{k\ge k_0}$ and $(E^k=\mu^k v^k)_{k\ge 0}$ (viewed as a sequence of measures) converge respectively uniformly toward $\mu$ and toward $E=\mu v$ in the weak sense up to extraction (we do not write the extraction to lighten notation). Let us note that by the superposition principle of Ambrosio-Gigli-Savaré \cite{AmbrosioGigliSavare2008} $\mu\in AC([0,t_m],\mesTdeux)$ where $AC$ denotes the space of absolutely continuous functions. Moreover, using \eqref{eq:W_2boundpourtouti}, we obtain that $\mu_{t_i}=\nu_i$ for all $1\le i\le m$.
\newline
Starting from \eqref{eq:ineqboundepsilon} we have for every $k\ge k_0$
\begin{align*}
\cfrac{1}{2}\int_0^{t_m}\|v_s^k-b(.,\mu_s^k)\|_{L^2(\mu_s^k)}^2ds+I_0(\mu_0^k)
\le  I_{t_1,...,t_m}(\nu_1,...,\nu_m)+2\varepsilon
\end{align*}
implying that for every $k\ge k_0$
\begin{align*}
\cfrac{1}{2}\int_0^{t_m}\|v_s^k\|_{L^2(\mu_s^k)}^2ds-\int_0^{t_m}\int_{\T^d}v_s^k(x)\cdot b(x,\mu_s^k)\mu_s(dx)ds+&\cfrac{1}{2}\int_0^{t_m}\|b(.,\mu_s^k)\|_{L^2(\mu_s^k)}^2ds+\\&\hspace{4cm}I_0(\mu_0^k)\\
\hspace{6cm}&\le  I_{t_1,...,t_m}(\nu_1,...,\nu_m)+2\varepsilon.
\end{align*}
Passing to the infimum limit and using that $I_0$ is lsc, that $b$ is Lipschitz and the convergence of $(\mu^k)_{k\ge 1}$ uniformly in $C([0,T],\mesTdeux)$ and the convergence of $(E^k)_{k\ge 1}$ toward $E$ we obtain that
 \begin{align*}
\cfrac{1}{2}\int_0^{t_m}\|v_s-b(.,\mu_s)\|_{L^2(\mu_s)}^2ds+I_0(\mu_0)\le  I_{t_1,...,t_m}(\nu_1,...,\nu_m)+2\varepsilon.
\end{align*}
In particular, by construction  we obtain that $$\mathcal J_m\le I_{t_1,...,t_m}(\nu_1,...,\nu_m)+2\varepsilon.$$
Let $\varepsilon$ goes to 0 to obtain the expected inequality. 
\end{proof}
Hence as a consequence of Lemma \ref{lemma:rewriteratefunctionmtuple} we obtain the following form for the rate function of the LDP of $(X_N)_{N\ge 1}$.
\begin{theorem}
\label{thm:largedev3}
Assume that the family $(\mathrm{ Law}(X_N(0)))_{N\ge 1}$ satisfies a large deviations principle in $\mesTdeux$ at speed $N/\sigma_N^2$ good rate function $I_0$.
\newline
Then the sequence of random variables $(X_N)_{N\ge1}$ satisfies a large deviations principle in $C([0,T],\mesTdeux)$ at speed $N/\sigma_N^2$ with good rate function $I$ where $$I(\mu)=I_0(\mu(0))+\cfrac{1}{2}\inf_{\substack{(v_s)_{s\in[0,T]};\\\partial_s \mu_s+\nabla.(\mu_s v_s)=0}}\int_0^T\int_{\T^d}|v_s(x)-b(x,\mu_s)|^2\mu_s(dx)ds$$ if $\mu:[0,T]\to \mesTdeux$ is absolutely continuous and $I(\mu)=+\infty$ otherwise.
\end{theorem}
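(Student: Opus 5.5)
The LDP itself with rate $I=\sup_m\sup_{t_1\le\dots\le t_m} I_{t_1,\dots,t_m}(\mu(t_1),\dots,\mu(t_m))$ is already given by Theorem \ref{thm:largedev2}. So the only thing to prove is the identification of this supremum with the action functional, which I denote
\[
\mathcal A(\mu):=I_0(\mu(0))+\tfrac12\inf_{v}\int_0^T\|v_s-b(\cdot,\mu_s)\|_{L^2(\mu_s)}^2ds
\]
for absolutely continuous $\mu$, and $+\infty$ otherwise. By Lemma \ref{lemma:rewriteratefunctionmtuple}, $I_{t_1,\dots,t_m}(\nu_1,\dots,\nu_m)$ is the infimum of the same action over curves on $[0,t_m]$ pinned at $\nu_i$ at times $t_i$. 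I may always add $T$ to the partition: this only enlarges the constraints, so $I_{t_1,\dots,t_m}$ increases. It therefore suffices to compare $\mathcal A(\mu)$ with $\sup_{\text{partitions}\ni T} I_{t_1,\dots,t_m}(\mu(t_1),\dots,\mu(t_m))$.

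\emph{Lower bound $I\le\mathcal A$.} If $\mu$ is absolutely continuous with a velocity $v$ of finite action, then for every partition the pair $(\mu,v)$ restricted to $[0,t_m]$ is admissible in Lemma \ref{lemma:rewriteratefunctionmtuple}. Since the integrand is nonnegative, its cost is at most $\mathcal A(\mu)$. Taking the infimum over $v$ and then the supremum over partitions gives $I(\mu)\le\mathcal A(\mu)$. If no such $v$ exists (including the non absolutely continuous case), then $\mathcal A=+\infty$ and there is nothing to prove.

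\emph{Upper bound $\mathcal A\le I$.} Assume $I(\mu)=:c<\infty$. Take dyadic partitions $P_n=\{kT2^{-n}\}$, which are nested. By Lemma \ref{lemma:rewriteratefunctionmtuple} and its proof, there are curves $(\mu^n,v^n)$ with $\mu^n(t)=\mu(t)$ for $t\in P_n$ and
\[
I_0(\mu^n_0)+\tfrac12\int_0^T\|v^n_s-b(\cdot,\mu^n_s)\|^2ds\le c+1/n .
\]
As in that proof, $b$ is bounded, so $\int_0^T\|v^n_s\|^2_{L^2(\mu^n_s)}ds\le C$ uniformly in $n$. Hence $W_2(\mu^n_t,\mu^n_s)\le C|t-s|^{1/2}$, and the $\mu^n$ are equicontinuous with values in the compact space $\mesTdeux$. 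Using the compactness argument of \cite{santambrogio2015opti} already invoked in Lemma \ref{lemma:rewriteratefunctionmtuple}, I extract a subsequence with $\mu^n\to\bar\mu$ uniformly and $\mu^n v^n\to \bar\mu\bar v$ weakly, where $(\bar\mu,\bar v)$ solves the continuity equation. Since $\mu^n=\mu$ on $P_n$ and the $P_n$ are nested, $\bar\mu=\mu$ on the dense set $\bigcup_n P_n$. Both $\bar\mu$ and $\mu$ are continuous, so $\bar\mu=\mu$; in particular $\mu$ is absolutely continuous, via the superposition principle as before. Finally, lower semicontinuity of $I_0$, the joint convexity of $(\mu,E)\mapsto\int|E/\mu|^2$ (Benamou--Brenier), and the uniform convergence handling the cross term $\int v\cdot b\,d\mu$ and $\int|b|^2d\mu$, exactly as at the end of Lemma \ref{lemma:rewriteratefunctionmtuple}, give $\mathcal A(\mu)\le c$.

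\emph{Expected obstacle.} The delicate step is the passage to the limit in the upper bound: the cross term $\int v^n\cdot b(\cdot,\mu^n)d\mu^n$ must converge. I would handle it by writing it as $\int b(x,\mu^n_s)\cdot dE^n$. Here $b(\cdot,\mu^n_s)\to b(\cdot,\mu_s)$ uniformly because $b$ is Lipschitz, and $E^n\to E$ weakly with uniformly bounded total variation (by Cauchy–Schwarz and the $L^2$ bound on $v^n$). These two facts together give convergence of the cross term, so the argument of Lemma \ref{lemma:rewriteratefunctionmtuple} transfers verbatim.
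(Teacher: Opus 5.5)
Your proposal is correct and follows the paper's route. The paper states Theorem \ref{thm:largedev3} as an immediate consequence of Theorem \ref{thm:largedev2} and Lemma \ref{lemma:rewriteratefunctionmtuple}, without writing out the passage from finite partitions to the whole path; your restriction argument for $I\le\mathcal A$ and your nested dyadic partitions with the compactness and lower semicontinuity argument of Lemma \ref{lemma:rewriteratefunctionmtuple} for $\mathcal A\le I$ supply exactly that omitted step (and since $0\in P_n$ you have $\mu^n_0=\mu(0)$, so lower semicontinuity of $I_0$ is not even needed).
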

\subsection{Extensions} The two main extensions that should be investigated are the case of a non-compact set instead of $\T^d$ and the case of a non-Lipschitz drift term $b$. 
\newline
Concerning a non-compact state space, the same strategy can be applied. The exponential tightness of the particle system can be obtained in the same way, provided that an additional assumption on the behavior of the drift at infinity is imposed. The main difference lies in the convergence of the transfer function to the solution of the limit PDE, and in the proof of the comparison principle for this PDE. In the non-compact case, to establish a comparison principle on the space of measures, one must first use Stegall's lemma to localize the problem and be able to consider a point of maximum before proceeding with the same proof as in the compact case (see Section 12.3 of \cite{bertucci2026analysis} or \cite{bertucci2026non}). Regarding the convergence, Ascoli's theorem will yield uniform convergence on compact subsets of the space of measures. It is then necessary in the proof of the convergence to add in the convergence proof a quadratic penalization term in the definition of test functions to ensure that the extremum points are attained. 
\newline
Extending this result to drift terms with lower regularity is a natural question, since it would allow one to handle singular interactions such as those arising in random matrix theory \cite{dyson1962,anderson2010}, or more generally in Riesz gases \cite{serfaty2024lectures}. This question shall be investigated in further work.

\subsection*{Acknowledgments} 
I thank Charles Bertucci (Ceremade) for the discussions on the paper and particularly on the analysis on the space of probability measures. I also thank Pierre-Louis Lions (Collège de France) for his courses at Collège de France which motivate this work.
\appendix
\section{Proof of Theorem \ref{thm:ldpmarginalstoproces}}
\label{Appendix:A}
\begin{lemma}
\label{lemma:largedevconttime1}
Let $(X_N)_{N\ge 1}$ be a sequence of random variables with values in $\mathbb Y$ that satisfies an LDP with good rate function $I$. If for any $m\ge 1$ and $0\le t_1 < t_2...< t_m \le T$, the sequence $(X_N(t_1),...,X_N(t_m))_{N\ge 1}$ satisfies an LDP in $\mathbb X^m$ with rate function $I_{t_1,...,t_m}$ we have $$I(f)\ge \sup_{m\ge 1}\sup_{0\le t_1<t_2...<t_m} I_{t_1,...,t_m}(f(t_1),...,f(t_m)).$$
\end{lemma}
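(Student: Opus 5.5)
The inequality comes from comparing small balls in the two spaces: a small ball in the marginal space $\mathbb X^m$ is controlled by a small ball in $\mathbb Y$. Fix $f\in\mathbb Y$, $m\ge 1$ and $0\le t_1<\dots<t_m\le T$. It suffices to show
\[
I_{t_1,\dots,t_m}(f(t_1),\dots,f(t_m))\le I(f),
\]
since the supremum over $m$ and over the times then gives the claim.

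Let $\Pi:\mathbb Y\to\mathbb X^m$ be the evaluation map $\Pi(g)=(g(t_1),\dots,g(t_m))$. We equip $\mathbb X^m$ with the max metric $d_m((x_i),(y_i))=\max_i d(x_i,y_i)$, which induces the product topology. The map $\Pi$ is $1$-Lipschitz from $(\mathbb Y,\mathcal D)$ to $(\mathbb X^m,d_m)$, because $d(g(t_i),h(t_i))\le \mathcal D(g,h)$ for each $i$.

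Consequently, for every $\gamma>0$, the event $\{X_N\in B(f,\gamma)\}$ is contained in $\{\Pi(X_N)\in B(\Pi(f),\gamma)\}$. Hence
\[
\mP(X_N\in B(f,\gamma))\le \mP\big((X_N(t_1),\dots,X_N(t_m))\in \overline{B(\Pi(f),\gamma)}\big).
\]

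We now apply the two LDPs. The lower bound of the LDP for $(X_N)$ on the open ball $B(f,\gamma)$ gives
\[
-I(f)\le -\inf_{B(f,\gamma)}I\le \liminf_N \tfrac{1}{\varepsilon_N}\log \mP(X_N\in B(f,\gamma)).
\]
The upper bound of the LDP for the marginals, applied to the closed ball, gives
\[
\limsup_N \tfrac{1}{\varepsilon_N}\log\mP\big(\Pi(X_N)\in\overline{B(\Pi(f),\gamma)}\big)\le -\inf_{\overline{B(\Pi(f),\gamma)}} I_{t_1,\dots,t_m}.
\]
Since $\liminf\le\limsup$, combining these with the inclusion yields, for every $\gamma>0$,
\[
\inf_{\overline{B(\Pi(f),\gamma)}} I_{t_1,\dots,t_m}\le I(f).
\]

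The only delicate point is passing to the limit $\gamma\to 0$. We use that $I_{t_1,\dots,t_m}$ is lower semicontinuous, as every rate function is. Lower semicontinuity gives
\[
\lim_{\gamma\to0}\inf_{\overline{B(\Pi(f),\gamma)}} I_{t_1,\dots,t_m}=I_{t_1,\dots,t_m}(\Pi(f)).
\]
Equivalently, this is the characterization in Remark \ref{remark:identiI} applied in $\mathbb X^m$. Letting $\gamma\to0$ therefore gives $I_{t_1,\dots,t_m}(f(t_1),\dots,f(t_m))\le I(f)$, which concludes the proof.

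We note that no goodness of either rate function is needed, and neither is exponential tightness: the argument uses only the continuity of $\Pi$, the lower bound in $\mathbb Y$ and the upper bound in $\mathbb X^m$.
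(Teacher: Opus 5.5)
Your proof is correct and follows essentially the same route as the paper. Both arguments use the inclusion $B(f,\gamma)\subset\{g : (g(t_1),\dots,g(t_m))\in B((f(t_1),\dots,f(t_m)),\gamma)\}$, the lower bound in $\mathbb Y$ on the open ball and the upper bound in $\mathbb X^m$ on the closed ball; the only difference is that you pass to $\gamma\to0$ by invoking lower semicontinuity of $I_{t_1,\dots,t_m}$ directly, whereas the paper cites Remark \ref{remark:identiI}, which amounts to the same thing.
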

\begin{proof}
It suffices to show that for every $f\in \mathbb Y$ and $0\le t_1 < t_2...< t_m \le T$ such that $(X_N(t_1),...,X_N(t_m))_{N\ge 1}$ satisfies an LDP in $\mathbb X^m$ with rate function $I_{t_1,...,t_m}$, then $$I(f)\ge I_{t_1,...,t_m}(f(t_1),...,f(t_m)).$$
We notice that for $\gamma>0$ $$B(f,\gamma)\subset \{g\in\mathbb Y\,|\, (g(t_1),...,g(t_m))\in B((f(t_1),...,f(t_m)),\gamma)\}.$$
\newline
Hence, using Remark \ref{remark:identiI} yields 
\begin{align*}
-I(f)&\le \underset{\gamma\to 0}{\lim}\underset{N\to +\infty}{\liminf}\cfrac{1}{\varepsilon_N}\log(\mP(X_N \in B(f,\gamma)))\\
&\le \underset{\gamma\to 0}{\lim}\underset{N\to +\infty}{\limsup}\cfrac{1}{\varepsilon_N}\log(\mP((X_N(t_1),...,X_N(t_m))\in \overline {B^m((f(t_1),...,f(t_m)),\gamma)}))\\
&=-I_{t_1,...,t_m}(f(t_1),...,f(t_m)).
\end{align*}
\end{proof}
The following lemma rigorously states that on a compact of $\mathbb Y$ the uniform convergence and the pointwise convergence are the same.
\begin{lemma}
\label{lemma:largedevconttime2}
Let $f\in\mathbb Y$. Fix $\gamma>0$ and $K$ a compact of $\mathbb Y$. There exists $0\le t_1<...<t_m\le T$ and $\gamma'>0$ such that 
$$K\cap \{g\in \mathbb Y\,|\, (g(t_1),...,g(t_m))\in B((f(t_1),...,f(t_m)),\gamma') \}\subset B(f,\gamma).$$
\end{lemma}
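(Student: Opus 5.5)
The plan is to use the Arzel\`a--Ascoli characterization of compact subsets of $\mathbb Y=C([0,T],\mathbb X)$: since $K$ is compact, it is uniformly equicontinuous. The single function $f$ is also uniformly continuous on $[0,T]$. I will therefore fix a common modulus of continuity $\omega$ valid for $f$ and for every $g\in K$, so that $d(g(t),g(s))\le \omega(|t-s|)$ for all $g\in K\cup\{f\}$.

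First I would choose $\delta>0$ with $\omega(\delta)<\gamma/3$. Then I take a finite grid $0=t_1<\dots<t_m=T$ with mesh at most $\delta$, set $\gamma'=\gamma/3$, and use the metric $\max_i d$ on $\mathbb X^m$ (or the sum; any product metric dominating each coordinate works). Let $g\in K$ with $(g(t_1),\dots,g(t_m))\in B((f(t_1),\dots,f(t_m)),\gamma')$. For arbitrary $t\in[0,T]$, pick $t_i$ with $|t-t_i|\le\delta$. The triangle inequality gives
\[
d(g(t),f(t))\le d(g(t),g(t_i))+d(g(t_i),f(t_i))+d(f(t_i),f(t))<\omega(\delta)+\gamma'+\omega(\delta).
\]

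To get the strict inclusion in the open ball $B(f,\gamma)$, I need the supremum over $t$ to stay strictly below $\gamma$. So I will instead take $\omega(\delta)\le\gamma/4$ and $\gamma'=\gamma/4$. Then $\sup_t d(g(t),f(t))\le 3\gamma/4<\gamma$, i.e.\ $\mathcal D(g,f)<\gamma$.

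The only step needing care is justifying uniform equicontinuity of $K$. I would derive it directly rather than citing Arzel\`a--Ascoli in a general Polish target. Cover $K$ by finitely many $\mathcal D$-balls of radius $\gamma/8$ centered at $g_1,\dots,g_p$. Each $g_j$ is uniformly continuous on the compact interval $[0,T]$, so a single $\delta$ works for all of them and for $f$. A standard $3\varepsilon$ argument then transfers this $\delta$ to every $g\in K$, at the cost of a $\gamma/4$ error, which the constants above absorb.
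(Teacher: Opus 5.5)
Your proposal is correct and follows essentially the same route as the paper: a grid of mesh $\delta$ chosen from a common modulus of continuity for $K\cup\{f\}$, then the triangle inequality through the nearest grid point, with $\gamma'$ a fixed fraction of $\gamma$. The differences are cosmetic. The paper obtains the equicontinuity by citing Arzel\`a--Ascoli and runs the triangle inequality through the step functions $P_g^\delta$, whereas you argue pointwise in $t$, derive uniform equicontinuity directly from a finite $\mathcal D$-net of $K$, and are careful to secure the strict inequality $\mathcal D(f,g)<\gamma$.
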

\begin{proof}
As a direct consequence of the Arzela-Ascoli theorem we can find a compact $k_\mathbb X$ of $\mathbb X$ and a modulus of continuity $h:\R^+\to\R^+$ that goes to 0 in 0 such that $$K\subset K_{k_\mathbb X,h}:=\{g\in \mathbb Y\,|\,g(t)\in k_\mathbb X, \, 0\le t\le T,\, \sup_{|t-s|\le \delta}d(f(t),f(s))\le h(\delta)\, \forall \delta>0 \}.$$
So it suffices to prove the result for $K$ of the form $K_{k_\mathbb X,h}$.
\newline
Let $\delta>0$ that shall be chosen later on. We define $t_{k+1}=k\delta$ for $0\le k\le \lfloor\frac{T}{\delta} \rfloor=:m-2$ and $t_m=T$. For every $g\in\mathbb Y$, we define on $[0,T]$ the function $P_g^\delta$ by  $P_g^\delta(t)=g(t_i)$ if $t\in[t_i,t_{i+1})$.
\newline
Using the triangular inequality we get 
$$\mathcal D(f,g)\le \mathcal D(f,P_f^\delta)+\mathcal D(P_f^\delta,P_g^\delta)+\mathcal D (P_g^\delta,g).$$
For $g\in K_{k_\mathbb X,h}\cap \{g\in \mathbb Y\,|\, (g(t_1),...,g(t_m))\in B^m((f(t_1),...,f(t_m)),\gamma' \}$, we have $$ \mathcal D(P_f^\delta,P_g^\delta)=\sup_{1\le i\le m} d(f(t_i),g(t_i))\le \gamma'$$ and since $|t_{i+1}-t_i|\le \delta$ we get
$$\mathcal D (P_g^\delta,g)=\sup_{1\le i\le m-1}\sup_{t\in[t_i,t_{i+1})}d(g(t),g(t_i))\le h(\delta).$$
Choosing $\delta>0$ small enough we have $$\mathcal D (P_g^\delta,g)\le h(\delta)\le \gamma'.$$
Moreover, since $f$ is continuous on a compact, it is uniform continuous, so there exists $\delta>0$ such that if $|t-s|\le \delta$ then $d(f(t),f(s))\le \gamma'$. 
We get for such small $\delta$ that 
$$\mathcal D(f,P_f^\delta)=\sup_{1\le i\le m-1}\sup_{t\in[t_i,t_{i+1})}d(f(t),f(t_i))\le \gamma'.$$
Hence if we summarize for a choice of $\delta$ small enough we have constructed $0\le t_1<...<t_m=T$ such that for all $g\in  K_{k_\mathbb X,h}\cap \{g\in \mathbb Y\,|\, (g(t_1),...,g(t_m))\in B^m((f(t_1),...,f(t_m)),\gamma' \}$ we get $\mathcal D(f,g)\le 3\gamma'$.
Choosing $\gamma'=\gamma/3$ gives the result.
\end{proof}
\begin{lemma}
\label{lemma:largedevconttime3}
Let $(X_N)_{N\ge 1}$ be a sequence of random variables with values in $\mathbb Y$ which satisfies an LDP with good rate function $I$ at speed $\varepsilon_N$. Assume that for all $0\le t_1<t_2...<t_m\le T$, the sequence $(X_N(t_1),...,X_N(t_m))_{N\ge 1}$ satisfies an LDP with good rate function $I_{t_1,...,t_m}$. Then, for every $f\in\mathbb Y$ we have $$I(f)\le \sup_{1\le i\le m}\sup_{0\le t_1<...<t_m  \le T} I_{t_1,...,t_m}(f(t_1),...,f(t_m)).$$
\end{lemma}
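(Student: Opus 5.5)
We prove the reverse inequality of Lemma \ref{lemma:largedevconttime1} by combining exponential tightness of the LDP (a consequence of goodness of $I$ on the Polish space $\mathbb Y$) with Lemma \ref{lemma:largedevconttime2}. Fix $f\in\mathbb Y$ and set $S:=\sup_{m}\sup_{t_1<\dots<t_m} I_{t_1,\dots,t_m}(f(t_1),\dots,f(t_m))$. We may assume $S<+\infty$. By Remark \ref{remark:identiI} it suffices to show that for every $\gamma>0$,
$$\limsup_{N\to+\infty}\frac{1}{\varepsilon_N}\log \mP\bigl(X_N\in \overline{B(f,\gamma)}\bigr)\ge -S,$$
and in fact we will prove the stronger statement with the open ball $B(f,\gamma)$ and a $\liminf$.

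\textbf{Step 1: exponential tightness.} Since $I$ is a good rate function, the level set $K_L:=\{I\le L\}$ is compact for $L>S$. The LDP upper bound on the closed set $\mathbb Y\setminus O$, with $O$ a small open neighbourhood of $K_L$, controls $\mP(X_N\notin O)$. To obtain a genuine compact set, we instead use directly that the LDP with good rate function on a Polish space implies exponential tightness (Lemma 4.1.10 / Exercise 4.1.10 of \cite{dembo2009large}). In the application of Theorem \ref{thm:ldpmarginalstoproces} this holds anyway by Theorem \ref{thm:exptightness}. We therefore fix a compact $K\subset\mathbb Y$ with $\limsup_N \varepsilon_N^{-1}\log\mP(X_N\notin K)\le -(S+1)$.

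\textbf{Step 2: reduction to marginals.} Apply Lemma \ref{lemma:largedevconttime2} to $f$, $\gamma$ and $K$. This gives times $t_1<\dots<t_m$ and $\gamma'>0$ such that $K\cap\{(g(t_i))_i\in B((f(t_i))_i,\gamma')\}\subset B(f,\gamma)$. Writing $A_N$ for the event $(X_N(t_i))_i\in B((f(t_i))_i,\gamma')$, we get
$$\mP(X_N\in B(f,\gamma))\ge \mP(A_N)-\mP(X_N\notin K).$$
The marginal LDP lower bound on the open ball yields
$$\liminf_N \varepsilon_N^{-1}\log\mP(A_N)\ge -I_{t_1,\dots,t_m}(f(t_1),\dots,f(t_m))\ge -S.$$
Since $\mP(X_N\notin K)$ decays at the strictly faster rate $-(S+1)$, it is eventually at most $\tfrac12\mP(A_N)$ along the relevant $N$. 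Hence $\liminf_N\varepsilon_N^{-1}\log\mP(X_N\in B(f,\gamma))\ge -S$. Letting $\gamma\to0$ and using Remark \ref{remark:identiI} gives $-I(f)\ge -S$, which is the claim.

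\textbf{Expected obstacle.} The delicate point is Step 1, namely making sure a single compact set carries all but an exponentially negligible mass at rate strictly larger than $S$. Without this, the subtraction in Step 2 could destroy the lower bound. If citing exponential tightness from goodness of $I$ is deemed unsatisfactory, we fall back on the hypothesis of Theorem \ref{thm:ldpmarginalstoproces}, where exponential tightness is assumed, and add it to the lemma's hypotheses. The comparison of the two exponential rates also needs a little care, since one of them is a $\liminf$ and the other a $\limsup$; choosing rate $S+1$ strictly above $S$ handles this.
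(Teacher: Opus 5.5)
Your proposal is correct and follows essentially the paper's argument: exponential tightness supplies a compact $K$, Lemma \ref{lemma:largedevconttime2} traps $K\cap\{\text{cylinder set}\}$ inside $B(f,\gamma)$, and the marginal LDP lower bound and Remark \ref{remark:identiI} finish. The differences are minor. You remove $\mP(X_N\notin K)$ by subtraction with a compact at rate $S+1$, which yields a $\liminf$ bound, whereas the paper uses $\limsup c_N\log(a_N+b_N)=\max(\cdot,\cdot)$ and then lets $L\to+\infty$. You are also right that exponential tightness is not among the lemma's stated hypotheses and that the paper uses it tacitly. Either of your justifications closes that gap: a sequence satisfying an LDP with good rate function on a Polish space is exponentially tight, or you can fall back on the hypothesis of Theorem \ref{thm:ldpmarginalstoproces}.
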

\begin{proof}
By Lemma \ref{lemma:largedevconttime2}, given $f\in\mathbb Y$, $\gamma>0$ and a compact $K$ that shall be chosen later on we can find $t_1<...<t_m$ and $\gamma'>0$ such that $$K\cap \{g\in \mathbb Y\,|\, (g(t_1),...,g(t_m))\in B^m((f(t_1),...,f(t_m)),\gamma') \}\subset B(f,\gamma).$$
We recall a fundamental lemma of Large deviations theory (see Lemma 1.2.15 of Section 1 of \cite{dembo2009large}) which is that for sequences of non negative real numbers $(a_N)_{N\ge 1}$ and $(b_N)_{N\ge 1}$ and a sequence $(c_N)_{N\ge 1}$ that goes to 0 we have $$\underset{N\to +\infty}{\limsup} (c_N\log(a_N+b_N))=\max\left(\underset{N\to +\infty}{\limsup}\,c_N\log(a_N),\underset{N\to +\infty}{\limsup}\,c_N\log(b_N)\right).$$
Using this property gives
\begin{equation}
\label{eq:largedevconttime}
\begin{split}
\max&\left(\underset{N\to +\infty}{\limsup}\cfrac{1}{\varepsilon_N}\log(\mP(X_N \in B(f,\gamma))),\underset{N\to +\infty}{\limsup}\cfrac{1}{\varepsilon_N}\log(\mP(X_N \notin K))\right)\\
&=\underset{N\to +\infty}{\limsup}\cfrac{1}{\varepsilon_N}\log(\mP(X_N \in B(f,\gamma))+\mP(X_N\notin K)) \\
&\ge \underset{N\to +\infty}{\limsup}\cfrac{1}{\varepsilon_N}\log(\mP(X_N \in B(f,\gamma)\cup K^c))\\
&\ge \underset{N\to +\infty}{\limsup}\cfrac{1}{\varepsilon_N}\log(\mP((X_N(t_1),...,X_N(t_m)) \in B((f(t_1),...f(t_m)),\gamma')))\\
&\ge\underset{N\to +\infty}{\liminf}\cfrac{1}{\varepsilon_N}\log(\mP((X_N(t_1),...,X_N(t_m)) \in B((f(t_1),...f(t_m)),\gamma')))\\
&\ge -\inf_{(y_1,...,y_m)\in B^m((f(t_1),...f(t_m)),\gamma')}I_{t_1,...,t_m}(y_1,...,y_m)\\
&\ge -I_{t_1,...,t_m}(f(t_1),...,f(t_m))\\
&\ge -\sup_{1\le i\le m}\sup_{0\le t_1<...<t_m  \le T} I_{t_1,...,t_m}(f(t_1),...,f(t_m)).
\end{split}
\end{equation}
Finally by exponential tightness for every $L>0$ there exists a compact $K_L$ of $\mathbb Y$ such that $$\underset{N\to +\infty}{\limsup}\cfrac{1}{\varepsilon_N}\log(\mP(X_N \notin K_L))\le -L. $$
Using these compacts in \eqref{eq:largedevconttime}, we get that for every $L>0$
\begin{align*}\max(\underset{N\to +\infty}{\limsup}\cfrac{1}{\varepsilon_N}\log(\mP(X_N \in B(f,\gamma))),-L&)\ge\\
& -\sup_{1\le i\le m}\sup_{0\le t_1<...<t_m  \le T} I_{t_1,...,t_m}(f(t_1),...,f(t_m)).
\end{align*} Letting first $L$ to $+\infty$ and then $\gamma$ to $0$ gives the result.
\end{proof}
We can now give a proof of Theorem \ref{thm:ldpmarginalstoproces}
\begin{proof}[Proof of Theorem \ref{thm:ldpmarginalstoproces}]
By Corollary \ref{coro:prokhorovlargedev}, since the sequence $(X_N)_{N\ge 1}$ is exponentially tight it suffices to prove the uniqueness of the good rate function $I$. From Lemma \ref{lemma:largedevconttime1} and Lemma \ref{lemma:largedevconttime3} the formula given in \eqref{eq:ratefunctionprocess} uniquely characterizes the good rate function $I$. 
\end{proof}
\section{Proof of Proposition \ref{prop:largedevmarginals}}
\label{Appendix:B}
\begin{proof}[Proof of Proposition \ref{prop:largedevmarginals}]
The formula \eqref{eq:varhandanfinidim} is a consequence of the Varadhan principle (Theorem \ref{thm:largedevvaradhan}).\newline
Now we shall prove that the knowledge of $\Lambda_m(f_1,...,f_m)$ for every $f_1,...,f_m\in BL(\mathbb X)$ uniquely characterize $I_m$.
\newline
Fix $(x_1,...,x_m)\in\mathbb X^m$. Choose for every $k\ge 1$ the functions $f_1^k,...,f_m^k\in BL(\mathbb X)$ defined by $f_i^k(y)=-k \min(d(x_i,y),1)$.
Let us prove that $$J(x_1,...,x_m):=\underset{k\to+\infty}{\lim}\inf_{y_1,...,y_m\in\mathbb X}\left[k\sum_{i=1}^m \min(d(x_i,y_i),1)+I_m(y_1,...,y_m)\right]=I_m(x_1,...,x_m).$$
We first deal with the case $I_m(x_1,...,x_m)<+\infty$.
Choosing the $x_1,...,x_m$ in the infimum immediately gives 
\begin{equation}
\label{eq:boundsimple}
\inf_{y_1,...,y_m\in\mathbb X}\left[k\sum_{i=1}^m \min(d(x_i,y_i),1)+I_m(y_1,...,y_m)\right]\le I_m(x_1,...,x_m).
\end{equation}
For the other inequality, let us consider a minimizing sequence of this infimum $y_1^k,...,y_m^k$ such that for every $k\ge 1$ we have 
\begin{equation}
\label{eq:minimiseur}
\begin{split}
k\sum_{i=1}^m \min(d(x_i,y_i^k),1)+I_m(y_1^k,...,y_m^k)&\le\\
&\hspace{-0.7cm}\inf_{y_1,...,y_m\in\mathbb X}\left[k\sum_{i=1}^m \min(d(x_i,y_i),1)+I_m(y_1,...,y_m)\right]+\cfrac{1}{k}\\
&\le I_m(x_1,...,x_m)+\frac{1}{k}.
\end{split}
\end{equation}
Since $I$ is non negative we get that for every $k\ge 1$
\begin{align*}
k\sum_{i=1}^m \min(d(x_i,y_i^k),1)\le I_m(x_1,...,x_m)+\frac{1}{k}.
\end{align*}
Dividing by $k$ and letting $k$ goes to $+\infty$ we get that for all $1\le i\le m$ $$y_i^k\underset{k\to+\infty}{\longrightarrow}x_i.$$
Again from \eqref{eq:minimiseur}, we obtain for every $k\ge 1$ 
$$I_m(y_1^k,...,y_m^k)-\cfrac{1}{k}\le \inf_{y_1,...,y_m\in\mathbb X}\left[k\sum_{i=1}^m \min(d(x_i,y_i),1)+I_m(y_1,...,y_m)\right].$$
Since $I_m$ is lsc, passing to the limit yields 
\begin{align*}
I_m(x_1,...,x_m)&\le\underset{k\to+\infty}{\liminf}\,I_m(y_1^k,...,y_m^k)-\cfrac{1}{k}
\\&\le \underset{k\to+\infty}{\liminf}\inf_{y_1,...,y_m\in\mathbb X}\left[k\sum_{i=1}^m \min(d(x_i,y_i),1)+I_m(y_1,...,y_m)\right].
\end{align*}
Combining this inequality with \eqref{eq:boundsimple} gives the result. 
In particular we have 
\begin{equation}
\label{eq:characIfinitedim}
\lim_{k\to+\infty}\Lambda_m(f_1^k,...,f_m^k)=-I_m(x_1,...,x_m).
\end{equation}
Now we deal with the case $I_m(x_1,...,x_m)=+\infty$. Assume by contradiction that $$\underset{k\to+\infty}{\liminf}\inf_{y_1,...,y_m\in\mathbb X}\left[k\sum_{i=1}^m \min(d(x_i,y_i),1)+I_m(y_1,...,y_m)\right]<+\infty.$$ We can find $C\in\R$ and a subsequence $(k_j)_{j\ge 1}$ that goes to $+\infty$ such that for all $j\ge 1$ $$\inf_{y_1,...,y_m\in\mathbb X}\left[k_j\sum_{i=1}^m \min(d(x_i,y_i),1)+I_m(y_1,...,y_m)\right]\le C.$$
By definition, we can find for all $j\ge 1$ $(y_1^j,...,y_m^j)\in\mathbb X^m$ that satisfies $$\left[k_j\sum_{i=1}^m \min(d(x_i,y_i^j),1)+I_m(y_1^j,...,y_m^j)\right]\le C+1.$$
We similarly get that for all $j\ge 1$ $$\sum_{i=1}^m \min(d(x_i,y_i^j),1)\le \cfrac{C+1}{k_j},$$
which gives that for all $1\le i\le m$, $y_i^j\underset{j\to+\infty}{\longrightarrow} x_i$. 
Again since $I_m$ is lsc we obtain $$+\infty=I_m(x_1,...,x_m)\le \liminf_{j\to+\infty}I_m(y_1^j,...,y_m^j)\le C+1,$$
which is a contradiction.
\newline
Hence in all cases we have 
\begin{equation}
\label{eq:characIfinitedim2}
\lim_{k\to+\infty}\Lambda_m(f_1^k,...,f_m^k)=-I_m(x_1,...,x_m).
\end{equation}
We now prove \eqref{eq:invformula}. Fix $(x_1,...,x_m)\in\mathbb X^m$. For $f_1,...,f_m\in BL(\mathbb X)$, we have $$\sum_{i=1}^m f_i(x_i)-I_m(x_1,...,x_m)\le \Lambda_m(f_1,...,f_m).$$ which gives that $$\sum_{i=1}^m f_i(x_i)-\Lambda_m(f_1,...,f_m)\le I_m(x_1,...,x_m).$$
Passing to the supremum gives $$\sup_{f_1,...,f_m \in BL(\mathbb X)}\left[\sum_{i=1}^m f_i(x_i)-\Lambda_m(f_1,...,f_m)\right]\le I_m(x_1,...,x_m).$$
Using the test functions $f_i^k(y)=-k\min(d(x_i,y),1)\in BL(\mathbb X)$ as before, for every $k\ge 1$, we get $$ \left[\sum_{i=1}^m f_i^k(x_i)-\Lambda_m(f_1^k,...,f_m^k)\right]\le \sup_{f_1,...,f_m \in BL(\mathbb X)}\left[\sum_{i=1}^m f_i(x_i)-\Lambda_m(f_1,...,f_m)\right].$$
Using that $f_i^k(x_i)=0$ for all $1\le i\le m$ and $k\ge 1$ and using $\eqref{eq:characIfinitedim2}$, passing to the limit yields 
$$ I_m(x_1,...,x_m)\le \sup_{f_1,...,f_m \in BL(\mathbb X)}\left[\sum_{i=1}^m f_i(x_i)-\Lambda_m(f_1,...,f_m)\right]$$
which concludes the proof. 

\end{proof}

\section{Proof of Proposition \ref{prop:compactness}}
\label{Appendix:D}
\begin{proof}
We shall apply Theorem \ref{thm:arzela-ascoli}.
In this proof, $C>0$ will be a constant that only depends on $f,b,T,\sup_N\sigma_N$ and $d$ that will be updated along the proof.
\newline 
First, by definition we have that for all $N\ge 1$, $$\sup_{t\in[0,T],\,x\in (\T^d)^N} |\Lambda_N^f(t,x)|\le \|f\|_{\infty}.$$  
It remains to show the equicontinuity property. We recall that for all $N\ge 1$, $\Lambda_N^f$ is the unique solution of \eqref{eq:edphofcole}.
\newline
The solution of \eqref{eq:edphofcole} can also be rewritten as the solution of a problem of optimal control
\begin{equation}
\label{eq:opticontropbfini}
\Lambda_N^f(t,x)=\sup_{\alpha\in\mathcal A, (Y_s)_{ s\le t}}\E_x\left[f(\mu_{Y_t^{\T^d}}^N)-\cfrac{1}{2N}\int_0^t\sum_{i=1}^N |\alpha_s^i|^2ds\right] ,
\end{equation}
where $\mathcal A$ is the set of control $\alpha :\Omega\times[0,T]\to (\R^d)^N$ that are adapted to the filtration $\sigma((Y_s)_{s\le t})$ and that are in $L^2(\Omega\times[0,T],(\R^d)^N)$ and $(Y_s^{\T^d})_{s\le t}$ is such that $(Y_s^{\T^d})_{s\le t}=(Y_s \text{ modulo } \Z^d)_{s\le t}$ where $(Y_s)_{s\le t}$ is the solution of the stochastic differential equations (in short SDE) $$dY_s^i=(b(Y_s^{i,\T^d},\mu_{Y_s^{\T^d}}^N)+\alpha_s^i)ds+\sigma_N dB_s^i, \, \forall 1\le i\le N,$$
with initial condition $Y_0^{\T^d}=x$.
It can be easily shown that the supremum is reached for the optimal control $$\alpha_s^{i,*}=N\nabla_{y_i}\Lambda_N^f(t-s,Y_s).$$
Let us mention that the formula \eqref{eq:opticontropbfini} can also be viewed as the so-called Boué-Dupuis formula \cite{boue1998variational}.
\newline
We now show the equicontinuity with respect to the space variable.
Fix $t\in[0,T]$ and $x,y\in (\T^d)^N$. Choosing the optimal control $\alpha^{t,x,*}$ associated to $(t,x)$ and using it for $(t,y)$ we have 
$$\Lambda_N^f(t,x)-\Lambda_N^f(t,y)\le \E\left[f\left(\mu_{Y_t^{\T^d,x}}^N\right)-f\left(\mu_{Y_t^{\T^d,y}}^N\right)\right],$$ where $(Y_s^{\T^d,x})_{s\le t}$ is the solution of the SDE associated to the optimal control $\alpha^{t,x,*}$ starting from $x$ and $(Y_s^{\T^d,y})_{s\le t}$ is the solution of the SDE with the same control but starting from $y$. 
Since $f$ is Lipschitz with constant $L_f$ we have 
\begin{equation}
\label{eq:ineqmodulecont}
\begin{split}\Lambda_N^f(t,x)-\Lambda_N^f(t,y)\le L_f\E (W_2(\mu_{Y_t^{\T^d,x}}^N,\mu_{Y_t^{\T^d,y}}^N))\le L_f\sqrt{\E (W_2(\mu_{Y_t^{\T^d,x}}^N,\mu_{Y_t^{\T^d,y}}^N)^2)}.
\end{split}
\end{equation}
We choose the label of the particles with respect to $x$ and $y$ such that $$\cfrac{1}{N}\,\sum_{k=1}^N|x_k-y_k|^2 =W_2(\mu_x^N,\mu_y^N)^2.$$
By definition of the distance $W_2$, we get \begin{equation}
\label{eq:bounW_2}
W_2(\mu_{Y_t^{\T^d,x}}^N,\mu_{Y_t^{\T^d,y}}^N)^2\le \frac{1}{N}\sum_{k=1}^N\left|Y_t^{k,y}-Y_t^{k,x}\right|^2=:S_t^{x,y,N}.
\end{equation}
Since $$Y_t^{k,y}-Y_t^{k,x}=y_k-x_k+\int_0^t \left[b(Y_u^{k,\T^d,y},\mu_{Y_u^{\T^d,y}}^N)-b(Y_u^{k,\T^d,x},\mu_{Y_u^{\T^d,x}}^N)\right]du$$ and using that $b$ is Lipschitz we get 
\begin{align*}
|Y_t^{k,y}-Y_t^{k,x}|^2&\le C|y_k-x_k|^2+C\left(\int_0^t|Y_u^{k,y}-Y_u^{k,x}|du\right)^2+C\left(\int_0^t W_2(\mu_{Y_u^{\T^d,y}}^N,\mu_{Y_u^{\T^d,x}}^N)du\right)^2\\
 &\le C|y_k-x_k|^2+C\left(\int_0^t|Y_u^{k,y}-Y_u^{k,x}|^2du+\int_0^t W_2(\mu_{Y_u^{\T^d,y}}^N,\mu_{Y_u^{\T^d,x}}^N)^2du\right),
\end{align*}
where we used the Cauchy-Schwarz inequality for the last bound and with $C$ a constant which depends on $d,L_b$ and $T>0$.
Summing for $1\le k\le N$ these identities and using \eqref{eq:bounW_2} yields $$S_t^{x,y,N}\le C\left(W_2(\mu_x^N,\mu_y^N)^2+2\int_0^t S_u^{x,y,N}du\right).$$
The Gronwall Lemma gives $$S_t^{x,y,N}\le C W_2(\mu_x^N,\mu_y^N)^2\exp(2Ct).$$
Thus, \eqref{eq:ineqmodulecont} implies $$\Lambda_N^f(t,x)-\Lambda_N^f(t,y)\le C W_2(\mu_x^N,\mu_y^N)\exp(Ct),$$
where $C$ is a constant that only depends on $d,L_b,T$ and $L_f$.
\newline
Exchanging the role of $x$ and $y$ gives the existence of a constant $C>0$ that depends on $d,L_b,T$ and $L_f$ such that we have the following uniform bound
\begin{equation}
\label{eq:equicontspace}
\sup_{t\in [0,T]}\left|\Lambda_N^f(t,x)-\Lambda_N^f(t,y)\right|\le C W_2(\mu_x^N,\mu_y^N).
\end{equation}
We now show the equicontinuity with respect to the time variable.
Using the Dynamic programming principle, for every $h\ge 0$, 
\begin{equation}
\label{eq:dpp}
\Lambda_N^f(t+h,x)=\sup_{\alpha\in\mathcal A, (Y_s)_{ s\le h}}\E_x\left[\Lambda_N^f(t,Y_h^{\T^d})-\cfrac{1}{2N}\int_0^h\sum_{i=1}^N |\alpha_s^i|^2ds\right],
\end{equation}
where as before $\mathcal A$ is the set of control $\alpha :\Omega\times[0,h]\to (\R^d)^N$ that are adapted to the filtration $\sigma((Y_s)_{s\le h})$ and that are in $L^2(\Omega\times[0,h],(\R^d)^N)$ and $(Y_s)_{s\le h}$ is the solution of the SDE $$dY_s^i=(b(Y_s^{i,\T^d}\mu_{Y_s^{\T^d}}^N)+\alpha_s^i)ds+\sigma_N dB_s^i, \, \forall 1\le i\le N.$$
Choosing the control $\alpha=0$ we have $$\Lambda_N^f(t+h,x)-\Lambda_N^f(t,x)\ge \E_x[\Lambda_N^f(t,Y_h^{\T^d})- \Lambda_N^f(t,x)],$$ where $$dY_r^i=b(Y_r^{i,\T^d},\mu_{Y_r^{\T^d}}^N)dr+\sigma_N dB_r^i, \, \forall 1\le i\le N.$$
Using the equicontinuity with respect to the space variable, we get 
\begin{align*}
\Lambda_N^f(t+h,x)-\Lambda_N^f(t,x)&\ge -C \E_x(W_2(\mu_{Y_h^{\T^d}}^N,\mu_x^N))\\
&\ge -C \sqrt{\E_x\left[W_2(\mu_{Y_h^{\T^d}}^N,\mu_x^N)^2\right]}\\
&\ge -C \sqrt{\E_x\left[\cfrac{1}{N}\sum_{i=1}^N\left|Y_{h}^i-x_i\right|^2\right]} .
\end{align*}
Applying the Itô formula gives
$$ \E_x[|Y_h^i-x_i|^2]\le C \|b\|_\infty^2 h^2+C\sigma_N^2 h.$$
Thus we obtain $$\Lambda_N^f(t+h,x)-\Lambda_N^f(t,x)\ge -C (h+\sqrt{h}),$$
where $C$ is a new constant that depends on the same parameters as before and $\sup_{N\ge 1} \sigma_N$.
\newline
For the upper bound, we start by 
\begin{equation}
\label{eq:upboundequipilibtime}
\Lambda_N^f(t+h,x)-\Lambda_N^f(t,x)=\sup_{\alpha\in\mathcal A, (Y_s)_{ s\le h}}\E_x\left[\Lambda_N^f(t,Y_h^{\T^d})-\Lambda_N^f(t,x)-\cfrac{1}{2N}\int_0^h\sum_{i=1}^N |\alpha_s^i|^2ds\right].
\end{equation}
Again by the equicontinuity with respect to the space variable we get $$\E_x[\Lambda_N^f(t,Y_h^{\T^d})-\Lambda_N^f(t,x)]\le C\E_x\left[W_2(\mu_{Y_h^{\T^d}}^N,\mu_x^N)\right]\le C\sqrt{\cfrac{1}{N}\sum_{i=1}^N \E_x\left[|Y_h^i-x_i|^2\right]}.$$
By the Itô formula and the Cauchy-Schwarz inequality we have 
\begin{align*}
\E_x[|Y_h^i-x_i|^2] &\le C h^2+h\E_x\left[\int_0^h|\alpha_s^i|^2ds\right]+C h\\
&\le C(h^2+h)+h\,\E_x\left[\int_0^h |\alpha_s^i|^2 ds\right].
\end{align*}
Combining this inequality with \eqref{eq:upboundequipilibtime} we obtain
\begin{equation}
\label{eq:boundequicontime}
\begin{split}
\Lambda&_N^f(t+h,x)-\Lambda_N^f(t,x)\le \\
&\sup_{\alpha\in\mathcal A, (Y_s)_{ s\le h}}\left(C\sqrt{C(h^2+h)+h\,\E_x\left[\cfrac{1}{N}\int_0^h\sum_{i=1}^N|\alpha_s^i|^2 ds\right]}-\cfrac{1}{2}\,\E_x\left[\cfrac{1}{N}\int_0^h \sum_{i=1}^N|\alpha_s^i|^2ds\right]\right).
\end{split}
\end{equation}
Let $\beta:=\E_x\left[\cfrac{1}{N}\dis\int_0^h \sum_{i=1}^N|\alpha_s^i|^2ds\right]$ and remark that we can bound the previous term as follows 
\begin{align*}
C\sqrt{C(h^2+h)+h\,\beta}-\cfrac{1}{2}\,\beta&\le C\sqrt{C(h^2+h)}+C\sqrt{h\beta}-\cfrac{1}{2}\,\beta\\
&\le C\sqrt{C(h^2+h)}+\cfrac{C^2}{2}\,h+\cfrac{1}{2}\,\beta-\cfrac{1}{2}\,\beta\\
&\le C\sqrt{C(h^2+h)}+\cfrac{C^2}{2}\,h\\
&\le C(h+\sqrt{h}),
\end{align*}
for a constant $C>0$ as before.
Using this bound with \eqref{eq:boundequicontime} yields
\begin{align*}
\Lambda&_N^f(t+h,x)-\Lambda_N^f(t,x)\le C(h+\sqrt{h}),
\end{align*}
where $C$ is a constant which depends on $d,b,f,T,\sup_{N\ge 1}\sigma_N$.
\newline
Hence we obtain the existence of a modulus of continuity $\omega$ such that for every $x\in(\T^d)^N$ and $h\ge 0$, we have $$|\Lambda_N^f(t+h,x)-\Lambda_N^f(t,x)|\le \omega(h).$$
This concludes the proof about the equicontinuity of the family $(\Lambda_N^f)_{N\ge 1}$.
\end{proof}

\section{Proof of Proposition \ref{coro:liftsolimpliqueviscosol}}
\label{Appendix:C}
As expected the differential of the lift of functions defined on $\mesTdeux$ is linked with the differential on $\mesTdeux$ as introduced in Definition \ref{def:differentialtore}.
\begin{lemma}
\label{prop:subhilbandmesure}
Let $\mathcal U:\mesTdeux\to\R$ be a usc function and define its lift $\tilde U: \mathcal H\to \R$ by $\tilde U(X)=\mathcal U(\mathrm{ Law}(X))$. Then, for all $\mu\in \mesTdeux$ and $\psi\in \partial^+\mathcal U(\mu)$, for any $(X,Z)\in L^2((0,1),\T^d)\times L^2((0,1),\R^d)$ such that $\mathrm{Law}(X,Z)=\mu(dx)\psi_x(dz)$ we have $Z\in \partial^+\tilde U (X)$.
\end{lemma}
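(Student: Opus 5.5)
The plan is to verify the super-differential inequality \eqref{eq:subdiffhilbert} directly. Given any $V\in L^2((0,1),\R^d)$, I would build a transport plan $\eta$ and a coupling $\Gamma$ to which Definition \ref{def:differentialtore} applies. Fix $\mu$, $\psi\in\partial^+\mathcal U(\mu)$ with its modulus $\omega$, and $(X,Z)$ with $\mathrm{Law}(X,Z)=\mu(dx)\psi_x(dz)$. For $V\in L^2((0,1),\R^d)$ I set
\[
\eta:=\mathrm{Law}(X,V)\in\mathcal P_2(\T^d\times\R^d),\qquad \Gamma:=\mathrm{Law}(X,Z,V)\in\mathcal P(\T^d\times\R^d\times\R^d).
\]

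These choices satisfy the hypotheses of Definition \ref{def:differentialtore}. First, $(\pi_1)_\sharp\eta=\mathrm{Law}(X)=\mu$, because $\mu$ is the first marginal of $\mathrm{Law}(X,Z)$. Second, $(\pi_1,\pi_2)_\sharp\Gamma=\mathrm{Law}(X,Z)=\mu(dx)\psi_x(dz)$ and $(\pi_1,\pi_3)_\sharp\Gamma=\eta$. The integrability condition $\int\int|z|^2\psi_x(dz)\mu(dx)=\E|Z|^2<\infty$ holds since $Z\in L^2$. Finally, $\eta$ has finite second moment in $v$ since $V\in L^2$ and $\T^d$ is compact.

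Next I identify each term of \eqref{eq:subdiffdef} with its lifted counterpart.
\begin{itemize}
\item[(i)] Since $\exp(x,v)=x+v$ modulo $\Z^d$, we have $\exp_\sharp\eta=\mathrm{Law}(X+V \text{ modulo } \Z^d)$, so $\mathcal U(\exp_\sharp\eta)=\tilde U(X+V\text{ modulo }\Z^d)$.
\item[(ii)] $\mathcal U(\mu)=\tilde U(X)$.
\item[(iii)] $\int z\cdot v\,\Gamma(dx,dz,dv)=\E[Z\cdot V]$.
\item[(iv)] $C_{\T^d}(\eta)^2=\int|v|^2\eta(dx,dv)=\E|V|^2=\|V\|_2^2$.
\end{itemize}
Substituting these into \eqref{eq:subdiffdef} gives
\[
\tilde U(X+V\text{ modulo }\Z^d)\le\tilde U(X)+\E[Z\cdot V]+\|V\|_2\,\omega(\|V\|_2).
\]
The modulus $\omega$ comes from the definition of $\psi\in\partial^+\mathcal U(\mu)$, so it does not depend on $V$. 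This is exactly \eqref{eq:subdiffhilbert}, hence $Z\in\partial^+\tilde U(X)$.

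There is no substantial obstacle here. The one point to be careful about is that the uniformity in Definition \ref{def:differentialtore} is over all couplings $\Gamma$ of $\mu\psi$ with $\eta$, so the coupling induced by the joint law of $(X,Z,V)$ is admissible whatever the dependence between $V$ and $(X,Z)$. The norm on $L^2((0,1),\R^d)$ is the Euclidean one, so $\|V\|_2$ coincides with $C_{\T^d}(\eta)$ and no adjustment of the modulus is needed. The time-dependent version, with $(\theta,\psi)$ as in Remark \ref{rem:extensiontemps}, follows by the identical argument, carrying along the extra term $\theta(t'-t)$ and the combined norm $\sqrt{|t'-t|^2+\|V\|_2^2}$.
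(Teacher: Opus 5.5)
Your proof is correct and follows the paper's argument: take $\eta=\mathrm{Law}(X,V)$ and $\Gamma=\mathrm{Law}(X,Z,V)$, check the marginal constraints of Definition \ref{def:differentialtore}, and read off \eqref{eq:subdiffhilbert} from \eqref{eq:subdiffdef} via $\exp_\sharp\eta=\mathrm{Law}(X+V \text{ modulo } \Z^d)$ and $C_{\T^d}(\eta)=\|V\|_2$. You also correctly obtain the linear term $\E[Z\cdot V]$, where the paper's display has the typo $\E[X\cdot Z]$. Your remark on the time-dependent version covers how the lemma is actually used in the proof of Proposition \ref{coro:liftsolimpliqueviscosol}.
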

Let us mention that we can conversely construct elements in the sub-differential of the initial function from elements in the sub-differential of its lift (see Proposition 5.21 of \cite{bertucci2026analysis}) but in what follows we will only use the result stated in Proposition \ref{prop:subhilbandmesure}. 
\begin{proof}
Let $V\in L^2((0,1),\R^d)$ we can define the plans $\eta\in\mathcal P(\T^d\times\R^d)$ which is the law of $(X,V)$ and $\Gamma\in \mathcal P(\T^d\times\R^d\times\R^d)$ which the law of $(X,Z,V)$. 
Noticing that by definition the law of $X+V\text{ modulo }\Z^d$ is $\exp_\sharp\eta$ and applying the fact that $\psi\in \partial^+ U(\mu)$ yields
\begin{align*}
\tilde U(X+V\text{ modulo }\Z^d)&=\mathcal U(\exp_\sharp \eta)\\
&\le \mathcal U(\mu)+\int_{\T^d\times\R^d\times\R^d}z\cdot v\,\Gamma(dx,dz,dv)+C_{\T^d}(\eta)\omega(C_{\T^d})\\
&=\tilde U(X)+\E[X\cdot Z]+\|V\|_{L^2}\omega(\|V\|_{L^2}),
\end{align*}
where $\omega$ is a modulus of continuity associated with the sub-differential at $\mu$ of $\mathcal U$. This gives the result.
\end{proof}
We can now prove Proposition \ref{coro:liftsolimpliqueviscosol}
\begin{proof}[Proof of Proposition \ref{coro:liftsolimpliqueviscosol}]
We only proof the fact that it is a sub-solution, the super-solution property can be done similarly.
\newline 
Let $(t,\mu)\in (0,T]\times \mesTdeux$ and $(\theta,\psi)\in \partial^+\mathcal U(t,\mu)$ and take $$(X,Z)\in
 L^2((0,1),\T^d)\times L^2((0,1),\R^d)$$ such that $\mathrm{Law}(X,Z)=\mu(dx)\psi_x(dz)$. Since by Proposition \ref{prop:subhilbandmesure}, $(\theta,Z)\in\partial^+\tilde U(t,X)$ we get 
\begin{align*}
\theta-\E[b(X,\mathrm{Law}(X))\cdot Z]-\frac{1}{2}\E[\,|Z|^2]\le 0.
\end{align*}
Using that $\mathrm{Law}(X,Z)=\mu(dx)\psi_x(dz)$ yields 
\begin{align*}
\theta-\int_{\T^d\times\R^d} b(x,\mu)\cdot z\,\psi_x(dz)\mu(dx)-\frac{1}{2}\int_{\T^d\times\R^d}|z|^2 \,\psi_x(dz)\mu(dx)\le 0.
\end{align*}
This concludes the proof.
\end{proof}
\renewcommand{\MR}[1]{}
\bibliographystyle{smfplain}
\bibliography{ref}

\end{document}